\newtheorem{theorem}[subsection]{Theorem}
\newtheorem{lemma}[subsection]{Lemma}
\newtheorem{corollary}[subsection]{Corollary}
\newtheorem{definition}[subsection]{Definition}
\newtheorem{example}[subsection]{Example}
\newtheorem{remark}[subsection]{Remark}
\newtheorem{conjecture}[subsection]{Conjecture}
\newcommand{\arxiv}[1]{\href{http://arxiv.org/abs/#1}{\tt arXiv:\nolinkurl{#1}}}
\newcommand{\googlebooks}[1]{(preview at \href{http://books.google.com/books?id=#1}{google books})}
\def\<{\langle}
\def\>{\rangle}
\begin{document}

\def\hpic #1 #2 {\mbox{$\begin{array}[c]{l} \epsfig{file=#1,height=#2}
\end{array}$}}
 
\def\vpic #1 #2 {\mbox{$\begin{array}[c]{l} \epsfig{file=#1,width=#2}
\end{array}$}}

\title{Quantum subgroups of the Haagerup fusion categories}
\author{Pinhas~Grossman}
\address{
}%
\email{pinhas@impa.br}

\author{Noah~Snyder}
\address{
}%
\email{nsnyder@math.columbia.edu}

\address{%
\rm URLs:\stdspace \tt \url{http://math.columbia.edu/~nsnyder} \\ \url{http://w3.impa.br/~pinhas}}

\primaryclass{46L37} \secondaryclass{18D10} \keywords{
  Subfactors, quantum subgroups, intermediate subfactors
}

\begin{abstract}
We answer three related questions concerning the Haagerup subfactor and its even parts, the Haagerup fusion categories.  Namely we find all simple module categories over each of the Haagerup fusion categories (in other words, we find the ``quantum subgroups'' in the sense of Ocneanu), we find all subfactors whose principal even part is one of the Haagerup fusion categories, and we compute the Brauer-Picard groupoid of Morita equivalences of the Haagerup fusion categories.  In addition to the two even parts of the Haagerup subfactor, there is exactly one more fusion category which is Morita equivalent to each of them.  This third fusion category has six simple objects and the same fusion rules as one of the even parts of the Haagerup subfactor, but has not previously appeared in the literature.  We also find the full lattice of intermediate subfactors for every subfactor whose even part is one of these three fusion categories, and we discuss how our results generalize to Izumi subfactors.\end{abstract}

\maketitle

\section{Introduction}
The Haagerup subfactor is a finite-depth subfactor with index $\frac{5+\sqrt{13}}{2} $; this is the smallest index above $4$ for any finite depth subfactor.  Its even parts are two fusion categories.  We will call the fusion category with four simple objects $\mathscr{H}_1$ and the one with six simple objects $\mathscr{H}_2$.   

%These fusion categories are not part of any well-understood families, and thus it is of great interest to better understand their structure.

Given a fusion category $\mathscr{C}$, one important structural question is to understand all quantum subgroups of $\mathscr{C}$ in the sense of Ocneanu \cite{MR1907188}.  That is we wish to understand all simple module categories over  $\mathscr{C}$ (by simple we mean semisimple and indecomposable).  The reason for the name ``quantum subgroup'' is that when $\mathscr{C}$ is the category of $G$-modules for a finite group $G$ then the simple module categories correspond to  the subgroups $H \subset G$ together with some additional cohomological data \cite{MR1976233}.  When $\mathscr{C}$ is a fusion category coming from quantum $\mathfrak{su}_2$ at a root of unity, then the quantum subgroups are given by the ADE Dynkin diagrams.  (See \cite{MR996454, OcnLect, MR1777347} for this result in subfactor language, and \cite{MR1936496, MR1976459, MR2046203} for the translation of these results into the language of fusion categories and module categories.)  Ocneanu has announced the classification of quantum subgroups of the fusion categories coming from quantum $\mathfrak{su}_3$ and $\mathfrak{su}_4$ \cite{MR1907188} (see \cite{MR2545609,MR2553429} for details in the $\mathfrak{su}_3$ case).  In this paper we find all quantum subgroups of $\mathscr{H}_1$ and $\mathscr{H}_2$.  One might think of the results of this paper as analogous to finding the subgroups of a sporadic finite simple group.

\begin{theorem}
There are exactly three quantum subgroups of each of $\mathscr{H}_1$ and $\mathscr{H}_2$.  The quantum subgroups of  $\mathscr{H}_1$ have the following graphs for fusion with the object of dimension $\frac{1+\sqrt{13}}{2}$.

\hpic{m1} {.3in} \quad \hpic{m2} {.6in} \quad \hpic{m3} {.6in}

The quantum subgroups of $\mathscr{H}_2$ have the following graphs for fusion with one of the objects of dimension $\frac{3+\sqrt{13}}{2}$.

\hpic{m4} {.6in} \quad
\hpic{m5} {.6in} \quad
\hpic{m6} {.4in}
\end{theorem}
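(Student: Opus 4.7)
The plan is to invoke Ostrik's correspondence between indecomposable semisimple module categories over a fusion category $\mathscr{C}$ and Morita equivalence classes of connected separable algebras in $\mathscr{C}$. Thus the classification reduces to enumerating such algebras (equivalently, Q-systems) in each of $\mathscr{H}_1$ and $\mathscr{H}_2$ up to Morita equivalence. Since the object named in each row generates its fusion category, the fusion graph with that generator completely determines the module category as a module, so it suffices to exhibit three module categories realising the pictured graphs and rule out all others.

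For existence I would identify three algebras in each $\mathscr{H}_i$: the trivial algebra $\mathbf{1}$ giving the regular module category; the canonical algebra implementing the Haagerup subfactor and realising the Morita equivalence $\mathscr{H}_1 \sim \mathscr{H}_2$ (accounting for the second graph in each row); and a third algebra whose existence is the novel content. I would look for this third algebra by isolating a Q-system on a suitable direct sum of simples in $\mathscr{H}_2$, transport it to $\mathscr{H}_1$ via the known Morita equivalence, and compute the fusion graphs to verify they match the third picture in each row. A natural way to predict this third algebra is to compute the common Drinfeld center $\mathcal{Z}(\mathscr{H}_1) \simeq \mathcal{Z}(\mathscr{H}_2)$ via Izumi's tube algebra analysis of the Haagerup subfactor and then list its Lagrangian algebras.

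For uniqueness I would combine Perron--Frobenius constraints on the fusion graph of the generator with an obstruction-theoretic test. The principal eigenvalue must equal the Frobenius--Perron dimension of the generator ($\tfrac{1+\sqrt{13}}{2}$ in the $\mathscr{H}_1$ case, $\tfrac{3+\sqrt{13}}{2}$ in the $\mathscr{H}_2$ case), the Frobenius--Perron dimension of the module category is fixed by the dimension of the associated algebra, and bipartite structure plus connectivity bound the vertex degrees; these constraints cut the search down to a finite case check. For each surviving combinatorial candidate, the algebra structure is either built directly or ruled out. The cleanest way to discharge the obstructions globally is to count Lagrangian algebras in $\mathcal{Z}(\mathscr{H}_i)$, which classifies Morita classes of fusion categories equivalent to $\mathscr{H}_i$, and then analyse the tensor auto-equivalences of each target category to convert Morita classes into module categories. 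The main obstacle will be this existence/obstruction analysis for borderline graphs, which is likely to require either explicit $6j$-symbol manipulations in the Haagerup categories or, more efficiently, a careful use of the Brauer--Picard groupoid structure developed in this paper to relate module categories over $\mathscr{H}_1$ and $\mathscr{H}_2$ to those over the new fusion category $\mathscr{H}_3$.
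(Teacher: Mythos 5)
Your overall frame matches the paper's: by Ostrik's theorem every simple module category is the category of modules over an algebra object, so the problem reduces to enumerating algebra objects, and the enumeration is seeded by Perron--Frobenius and fusion-graph combinatorics. But you are missing the two ideas that make this tractable. First, the paper does not classify all simple algebras directly; it introduces \emph{minimal} algebra objects (those of smallest dimension among the internal endomorphism algebras $\underline{\mathrm{Hom}}(X,X)$ of simple objects in the resulting module category) and classifies these only up to \emph{inner} automorphism, i.e.\ conjugation by the invertible objects $\alpha,\alpha^2$ of $\mathscr{H}_2$. After the combinatorial sieve this leaves just four candidates in $\mathscr{H}_2$: $1$, $1+\xi$, $1+\alpha+\alpha^2$, and $1+\xi+\alpha\xi$, each simple enough to settle by hand (triviality, $3$-supertransitivity plus Temperley--Lieb, and $\mathbb{Z}/3\mathbb{Z}$-graded vector spaces, respectively). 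The classification for $\mathscr{H}_1$ (and $\mathscr{H}_3$) is then deduced by a counting argument relating module categories, Morita equivalences, and outer automorphisms, rather than re-run from scratch. Without the minimality reduction your ``finite case check'' must confront seven families of candidate algebras per category, several of large dimension and multiplicity, for which direct existence and uniqueness arguments are not feasible.

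Second, and more seriously, the decisive step is ruling out an algebra structure on $1+\xi+\alpha\xi$ in $\mathscr{H}_2$ (Lemma \ref{noalg}). This candidate survives every Grothendieck-ring-level test you list --- indeed the corresponding principal graph \emph{is} realized, but by an algebra in $\mathscr{H}_3$, which has the same fusion ring as $\mathscr{H}_2$ --- so no combinatorial or Perron--Frobenius argument can eliminate it, and if it survived the theorem would be false: there would be a fourth quantum subgroup of $\mathscr{H}_2$. The paper's argument is a specific trick: a hypothetical algebra structure would yield, by tensoring with the $1+\alpha^2\xi$ module, a simple algebra in $\mathscr{H}_1$ of dimension $\frac{33+9\sqrt{13}}{2}$, which is impossible by the graph enumeration for the fusion ring $H_4$. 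Your proposed substitutes do not fill this hole: computing $\mathcal{Z}(\mathscr{H}_i)$ and enumerating its Lagrangian algebras is a far larger undertaking than the theorem itself (the \'etale-algebra existence questions in the center are no easier than the ones being avoided, and none of that data is established in this paper), while ``careful use of the Brauer--Picard groupoid structure developed in this paper'' is circular, since that groupoid is computed \emph{from} the classification of module categories you are trying to prove.
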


The graphs in the above theorem are analogous to the ADE Dynkin diagrams for quantum subgroups of quantum $\mathfrak{su}_2$.

Understanding all quantum subgroups of $\mathscr{C}$ also allows us to answer several other important structural questions about  $\mathscr{C}$.  A subfactor whose principal even part is $\mathscr{C}$ is roughly the same thing as a simple algebra object in $\mathscr{C}$ (see Section \ref{sec:subfactorvsalgebra} and \cite{MR1257245, MR1444286, MR1966524, MR2075605}).  All simple algebra objects in $\mathscr{C}$ can be realized as the internal endomorphisms of a simple object in some module category over $\mathscr{C}$ \cite{MR1976459}.  Hence we can use our classification of quantum subgroups to describe all subfactors whose even parts are $\mathscr{H}_1$ or  $\mathscr{H}_2$ (this generalizes the GHJ subfactors \cite{MR999799} constructed from the quantum subgroups of quantum $\mathfrak{su}_2$).

\begin{theorem}
There are exactly $7$ subfactors of the hyperfinite $II_1$ factor whose principal even part is $\mathscr{H}_1$.  These subfactors have the following principal graphs, dual principal graphs, and indices.

\begin{tabular}{ccc}
\hpic{hu1} {0.6in} & \hpic{hu8} {0.6in} &  $\frac{5+\sqrt{13}} {2}$\\
\hpic{hu2} {0.6in} & \hpic{hu9} {0.6in} &  $12+3\sqrt{13}$\\
\hpic{hu3} {0.6in} & \hpic{hu10} {0.6in} &  $4+\sqrt{13}$\\
\hpic{hu5} {0.6in} & \hpic{hu12} {0.6in} &  $\frac{15+3\sqrt{13}}{2}$\\
\hpic{hu4} {0.6in} & \hpic{hu4} {0.6in} &  $\frac{11+3\sqrt{13}}{2}$\\
\hpic{hu6} {0.6in} & \hpic{hu6} {0.6in} &  $\frac{19+5\sqrt{13}}{2}$\\
\hpic{hu7} {0.3in} & \hpic{hu7} {0.3in} &  $\frac{7+\sqrt{13}}{2}$\\
\end{tabular}
\end{theorem}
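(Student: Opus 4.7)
The plan is to combine the subfactor--algebra correspondence of Section~\ref{sec:subfactorvsalgebra} with the module category classification of the previous theorem. By that correspondence, subfactors whose principal even part is $\mathscr{H}_1$ are parametrized by isomorphism classes of connected simple Frobenius algebra objects $A \in \mathscr{H}_1$. By \cite{MR1976459}, every such algebra arises as $\underline{\mathrm{End}}_{\mathcal{M}}(m)$ for some simple $\mathscr{H}_1$-module category $\mathcal{M}$ and some simple object $m \in \mathcal{M}$, and two pairs $(\mathcal{M}, m)$ and $(\mathcal{M}', m')$ yield isomorphic algebras exactly when there is an $\mathscr{H}_1$-module equivalence $\mathcal{M} \to \mathcal{M}'$ carrying $m$ to $m'$. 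Combined with the previous theorem, the problem becomes a finite enumeration: list simple objects in the three module categories $\mathcal{M}_1, \mathcal{M}_2, \mathcal{M}_3$ and identify orbits under the autoequivalence groups $\mathrm{Aut}_{\mathscr{H}_1}(\mathcal{M}_i)$.

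I would first read off the simples of each $\mathcal{M}_i$ from the module fusion graphs in the previous theorem and determine each $\mathrm{Aut}_{\mathscr{H}_1}(\mathcal{M}_i)$, either by direct inspection or by invoking the Brauer--Picard groupoid computation which forms another main result of this paper. Grouping simples into orbits should produce exactly seven equivalence classes. For each chosen representative $(\mathcal{M}_i, m)$, the principal graph is read directly off the module fusion data: its even vertices are the simples of $\mathscr{H}_1$, its odd vertices are the simples of $\mathcal{M}_i$, and its edges are the multiplicities of the odd vertices in the action of the generating object on $m$. The index equals $\mathrm{FPdim}(\underline{\mathrm{End}}(m))$, which is a routine Frobenius--Perron computation in the Grothendieck ring of $\mathscr{H}_1$ once the decomposition of $\underline{\mathrm{End}}(m)$ into simples is recorded.

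The main obstacle will be the dual principal graph. This requires viewing $\mathcal{M}_i$ as a module category not over $\mathscr{H}_1$ but over its Morita dual $(\mathscr{H}_1)^{\ast}_{\mathcal{M}_i}$, and reading off the fusion graph with the generator on that side. Identifying each Morita dual with $\mathscr{H}_1$, $\mathscr{H}_2$, or the third Morita-equivalent fusion category introduced in the abstract is exactly what the Brauer--Picard computation of this paper supplies; with that identification in hand, the dual principal graph can be computed explicitly from the bimodule action, which I would carry out case by case. The ``exactly seven'' assertion then follows from the bijection of the first paragraph together with the check that the seven resulting subfactors are pairwise non-isomorphic, which is immediate from the distinct indices and graphs listed in the table.
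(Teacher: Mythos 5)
Your proposal is correct and follows essentially the same route as the paper's proof of its Theorem \ref{subthm}: use the classification of simple module categories over $\mathscr{H}_1$ and their Morita duals, realize every simple algebra object as $\underline{\mathrm{End}}(m)$ for $m$ simple in one of the three (bi)module categories, identify the objects related by the $\mathbb{Z}/3$ twist by invertible objects, and read off the principal and dual graphs from the previously constructed $\mathscr{H}_1$--$\mathscr{H}_2$ and $\mathscr{H}_1$--$\mathscr{H}_3$ subfactors. The only point you elide that the paper addresses explicitly is that each algebra object must actually be a $Q$-system to yield a subfactor of the hyperfinite $II_1$ factor, which the paper handles once per module category via Lemma \ref{lem:QTransfer}.
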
 

\begin{theorem}
There are exactly $4$ subfactors of the hyperfinite $II_1$ factor whose principal even part is $\mathscr{H}_2$.  These subfactors have the following principal graphs, dual principal graphs, and indices.

\begin{tabular}{ccc}
\hpic{hu8} {0.6in} & \hpic{hu1} {0.6in} &  $\frac{5+\sqrt{13}} {2}$\\
\hpic{hu9} {0.6in} & \hpic{hu2} {0.6in} &  $12+3\sqrt{13}$\\
\hpic{hu13} {0.6in} & \hpic{hu13} {0.6in} &  $\frac{33+9\sqrt{13}}{2}$\\
\end{tabular}

\begin{tabular}{ccc}\hpic{hu11} {0.6in} & \hpic{hu11} {0.6in} &  $\frac{11+3\sqrt{13}}{2}$\\
\end{tabular}
\end{theorem}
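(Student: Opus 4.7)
The plan is to apply the same framework used for the preceding theorem (the classification of subfactors with principal even part $\mathscr{H}_1$) but on the opposite side. By the correspondence recalled in the introduction between subfactors with a given principal even part $\mathscr{C}$ and simple Q-system algebra objects in $\mathscr{C}$, together with the result of \cite{MR1976459} that every simple algebra object arises as the internal endomorphism algebra $\underline{\mathrm{End}}(m)$ of a simple object $m$ in some indecomposable semisimple module category over $\mathscr{C}$, the task reduces to enumerating such pairs $(\mathcal{M}, m)$ for $\mathscr{C} = \mathscr{H}_2$ and identifying when two pairs produce isomorphic algebra objects.

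First I would invoke Theorem 1.1 to obtain the three module categories over $\mathscr{H}_2$, whose fusion structures are displayed as the graphs m4, m5, and m6. For each simple object $m$ in each of these module categories, I would compute the internal endomorphism algebra $\underline{\mathrm{End}}(m) \cong \bigoplus_X X \otimes \mathrm{Hom}_\mathcal{M}(X \cdot m, m)^*$ as an object of $\mathscr{H}_2$; the multiplicity data is encoded in the action matrices of all six simple objects of $\mathscr{H}_2$ on the module category, which can be reconstructed from the displayed fusion graphs together with the known fusion rules of $\mathscr{H}_2$.

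Next, for each such algebra I would read off the associated subfactor invariants. The index equals the Frobenius-Perron dimension of the algebra. The dual principal graph is obtained from the rooted module category graph (the action of the distinguished generator of $\mathscr{H}_2$ on $\mathcal{M}$, rerooted at $m$). The principal graph requires identifying the Morita dual fusion category $\mathcal{M}^{*}_{\mathscr{H}_2}$: for the regular module $\mathscr{H}_2$ itself this dual is again $\mathscr{H}_2$; for the module category realizing the Haagerup subfactor it is $\mathscr{H}_1$; and for the third module category it is the new fusion category Morita equivalent to each $\mathscr{H}_i$ that is described elsewhere in the paper. Reading off fusion with the appropriate generator in each dual category produces the principal graphs.

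Finally I would tabulate the subfactors obtained, discard the trivial algebra $\mathbf{1} \in \mathscr{H}_2$ (corresponding to $N = M$), and identify isomorphic algebra objects coming from distinct choices of $m$. The expected outcome is precisely the four subfactors listed: two that coincide with the Haagerup subfactor and its square (with the roles of principal and dual principal graphs interchanged relative to the previous theorem), together with two self-dual subfactors of indices $\frac{33+9\sqrt{13}}{2}$ and $\frac{11+3\sqrt{13}}{2}$ arising from objects in the third module category. The main obstacle I expect is the computation of principal graphs for the non-regular module categories: this requires explicit manipulation of the third Morita-equivalent fusion category, whose construction is itself one of the principal technical contributions of the paper. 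A subsidiary check is to verify that the four resulting subfactors are pairwise non-isomorphic; this follows from the distinctness of their pairs of rooted principal and dual principal graphs.
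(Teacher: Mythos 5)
Your overall strategy is the one the paper uses: first classify the simple module categories over $\mathscr{H}_2$ (done in the paper by classifying minimal algebra objects up to inner automorphism), then realize every simple algebra object as $\underline{\mathrm{Hom}}(m,m)$ for $m$ a simple object of one of those module categories, read off indices and graphs, and identify the dual even parts $\mathscr{H}_2$, $\mathscr{H}_1$, $\mathscr{H}_3$. However, there are two concrete problems with your accounting. First, the enumeration you describe produces \emph{five} nontrivial algebras, not four: the third module category (the $(1+\alpha+\alpha^2)$-modules) has two simple objects, and the internal endomorphisms of the smaller one is the index-$3$ algebra $1+\alpha+\alpha^2$ itself, which is a perfectly good simple Q-system. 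It must be excluded not because it is trivial but because it does not tensor-generate $\mathscr{H}_2$ --- the resulting subfactor has principal even part $\mathrm{Vec}(\mathbb{Z}/3\mathbb{Z})$, so it fails the hypothesis of the theorem. You only discard the unit algebra, so without this extra step your count comes out to $5$. This is exactly the ``important caveat'' about the dictionary between algebra objects and subfactors: the principal even part of the subfactor attached to $A$ is the subcategory generated by $A$, not all of $\mathscr{C}$.

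Second, your attribution of the two remaining subfactors is off: the index-$\frac{11+3\sqrt{13}}{2}$ subfactor with graphs (4$'$)-(4$'$) is an $\mathscr{H}_2$-$\mathscr{H}_2$ subfactor and therefore comes from the \emph{regular} module category (it is $\underline{\mathrm{Hom}}(\xi,\xi)\cong \xi\bar\xi = 1+\xi+\alpha\xi+\alpha^2\xi$ in $\mathscr{H}_2$ itself), not from the third module category. The third module category contributes only the discarded index-$3$ algebra and the index-$\frac{33+9\sqrt{13}}{2}$ subfactor with graphs (6$'$)-(6$'$), whose dual even part is $\mathscr{H}_3$ (so, despite its principal and dual graphs coinciding, it is not self-dual). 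With these two corrections the tally is: one subfactor from the regular module category, two from the Haagerup module category, and one from the third, giving the stated four.
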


Note that for several of the subfactors in the above lists the dual subfactor does not appear on either list.  This is because for these subfactors the dual even part is not $\mathscr{H}_1$ nor $\mathscr{H}_2$.  Instead it is a new fusion category, which we call $\mathscr{H}_3$.

\begin{theorem}
The (higher) Morita equivalence class of $\mathscr{H}_1$ consists of exactly three fusion categories: $\mathscr{H}_1$, $\mathscr{H}_2$, and $\mathscr{H}_3$.  The fusion category $\mathscr{H}_3$ has six objects, three of dimension $1$ and three of dimension $\frac{3+\sqrt{13}}{2}$, and the same fusion ring as $\mathscr{H}_2$.  The fusion category $\mathscr{H}_3$ has exactly three quantum subgroups, the fusion graphs for these quantum subgroups (with respect to any of the $\frac{3+\sqrt{13}}{2}$ dimensional objects) are: 

\hpic{m4} {.6in} \quad
\hpic{m7} {.6in} \quad
\hpic{m6} {.4in}

Furthermore, $\mathscr{H}_3$  appears as the even part of exactly four subfactors whose principal graphs, dual principal graphs, and indices are listed below.

\begin{tabular}{ccc}
\hpic{hu10} {0.6in} & \hpic{hu3} {0.6in} &  $4+\sqrt{13}$\\
\hpic{hu12} {0.6in} & \hpic{hu5} {0.6in} &  $\frac{15+3\sqrt{13}}{2}$\\
\hpic{hu13} {0.6in} & \hpic{hu13} {0.6in} &  $\frac{33+9\sqrt{13}}{2}$\\
\hpic{hu11} {0.6in} & \hpic{hu11} {0.6in} &  $\frac{11+3\sqrt{13}}{2}$\\
\end{tabular}
\end{theorem}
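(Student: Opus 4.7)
The plan is to derive everything from the classification of $\mathscr{H}_1$- and $\mathscr{H}_2$-module categories in the first theorem, combined with the Morita/module-category dictionary of \cite{MR1976459}: each simple module category $\mathscr{M}$ over a fusion category $\mathscr{C}$ has a dual fusion category $\mathscr{C}^{*}_{\mathscr{M}}$ Morita equivalent to $\mathscr{C}$, and every fusion category Morita equivalent to $\mathscr{C}$ arises this way. To enumerate the Morita class of $\mathscr{H}_1$, I would compute $(\mathscr{H}_1)^{*}_{\mathscr{M}_i}$ for each of the three simple $\mathscr{H}_1$-module categories $\mathscr{M}_1,\mathscr{M}_2,\mathscr{M}_3$ of the first theorem. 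The regular module returns $\mathscr{H}_1$ itself; the module category underlying the Haagerup subfactor returns $\mathscr{H}_2$ essentially by definition of the dual even part; and the third module category defines a new fusion category $\mathscr{H}_3 := (\mathscr{H}_1)^{*}_{\mathscr{M}_3}$. Transitivity of Morita equivalence, combined with the fact that the three $\mathscr{M}_i$ exhaust the simple module categories, then forces $\{\mathscr{H}_1,\mathscr{H}_2,\mathscr{H}_3\}$ to be the full Morita class.

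Next, I would read off the internal structure of $\mathscr{H}_3$. Its simple objects are the simple $\mathscr{H}_1$-module endofunctors of $\mathscr{M}_3$, whose number and Frobenius--Perron dimensions are determined by the action matrix of $\mathscr{H}_1$ on $\mathscr{M}_3$ encoded by the fusion graph $m_3$ from the first theorem. This should produce six simples of dimensions $1,1,1,d,d,d$ with $d=\frac{3+\sqrt{13}}{2}$, and composition of module functors should produce, after a direct check, a fusion ring isomorphic to that of $\mathscr{H}_2$. To prove $\mathscr{H}_3 \not\simeq \mathscr{H}_2$ as fusion categories despite this coincidence of rings, I would invoke a Morita invariant, namely the unordered list of module-category graphs. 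By Morita invariance, $\mathscr{H}_3$ has exactly three quantum subgroups, computable via the bijection $\mathscr{N} \mapsto \mathrm{Fun}_{\mathscr{H}_1}(\mathscr{M}_3,\mathscr{N})$ and by pulling the action of a $d$-dimensional object back through this equivalence; this yields the graphs $m_4, m_7, m_6$ claimed in the theorem. Since $m_7 \neq m_5$ under any relabeling, the triple of graphs for $\mathscr{H}_3$ differs from the triple $m_4, m_5, m_6$ for $\mathscr{H}_2$, forcing $\mathscr{H}_3 \not\simeq \mathscr{H}_2$.

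Finally, the four subfactors with principal even part $\mathscr{H}_3$ can be identified in two mutually reinforcing ways. From the viewpoint of \cite{MR1976459}, they correspond to pairs (quantum subgroup of $\mathscr{H}_3$, simple object therein), i.e.\ to simple algebra objects in $\mathscr{H}_3$, and one counts these using the three graphs just determined. Alternatively, they are obtained by taking the dual of every subfactor from the second or third theorem whose non-principal even part must lie in the Morita class: the two subfactors hu3/hu10 and hu5/hu12 from the $\mathscr{H}_1$ list (whose dual sides must be $\mathscr{H}_3$, since no Morita class member other than $\mathscr{H}_3$ has the required rank), and the two self-dual subfactors hu13/hu13 and hu11/hu11 from the $\mathscr{H}_2$ list (which, being self-dual only at the graph level and not at the fusion-category level, actually pair $\mathscr{H}_2$ with $\mathscr{H}_3$). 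Matching the two enumerations confirms the list of four. The main obstacle throughout is the passage from fusion ring to fusion category: because $\mathscr{H}_2$ and $\mathscr{H}_3$ share a ring, separating them — and pinning down the tensor structure of $\mathscr{H}_3$ precisely enough to compute its module categories and algebra objects — is the technical heart of the argument, for which the module-graph comparison above is the cleanest available route.
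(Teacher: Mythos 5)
Your overall architecture matches the paper's: classify the simple module categories, take duals to generate the Morita class, use the counting argument (number of Morita equivalences equals number of module categories times number of outer automorphisms) to transfer the classification across the class, and enumerate the subfactors as internal endomorphisms of simple objects in the relevant bimodule categories. Your identification of the two non-self-dual pairs coming from the $\mathscr{H}_1$ list is also the paper's. But there is a genuine gap at the technical heart. You assert that the number of simple objects of $\mathscr{H}_3$, their dimensions, its fusion ring, and the fusion graphs of its module categories (in particular the graph you call $m_7$) are ``determined by the action matrix of $\mathscr{H}_1$ on $\mathscr{M}_3$.'' They are not, at least not formally: the dual category of a module category is not computable from the module fusion graph alone (this is exactly the familiar fact that a principal graph does not determine the dual principal graph). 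The paper has to work for this: it pins down the dual graph (3$'$) by a Frobenius-reciprocity and $\mathbb{Z}/3$-symmetry argument (building the triangle of odd vertices edge by edge and closing off by an index count), and then needs a separate duality argument, citing annular tangles, to show the resulting ring is $H_6$ rather than some other ring with the same dimensions. Since your proof that $\mathscr{H}_3\not\simeq\mathscr{H}_2$ rests entirely on the claim $m_7\neq m_5$, and $m_7$ is exactly the output of this uncarried-out computation, your distinguishing step is unsupported as written; nothing in your argument excludes the possibility that the dual turns out to be $\mathscr{H}_2$ again and the Morita class has only two members. The paper closes this loop with an independent input you have no analogue of: the purely $\mathscr{H}_2$-internal lemma that $1+\xi+\alpha\xi$ admits no algebra structure (proved by an index obstruction transported into $\mathscr{H}_1$), combined with the fact that the computed graph (3$'$) forces such an algebra to exist in $\mathscr{H}_3$.

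There is also a concrete error in your identification of the four subfactors with even part $\mathscr{H}_3$. You claim that both self-dual-looking entries on the $\mathscr{H}_2$ list, the one with index $\frac{33+9\sqrt{13}}{2}$ and the one with index $\frac{11+3\sqrt{13}}{2}$, ``actually pair $\mathscr{H}_2$ with $\mathscr{H}_3$.'' This is true for the first but false for the second: the index-$\frac{11+3\sqrt{13}}{2}$ subfactor on the $\mathscr{H}_2$ list is a genuinely self-dual $\mathscr{H}_2$--$\mathscr{H}_2$ subfactor (it is $\underline{\mathrm{End}}(\xi)=1+\xi+\alpha\xi+\alpha^2\xi$, whose module category is the regular one), and the entry with the same graphs on the $\mathscr{H}_3$ list is a \emph{different} subfactor, the self-dual $\mathscr{H}_3$--$\mathscr{H}_3$ one. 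Matching your two enumerations as you propose would therefore produce an inconsistent count of $\mathscr{H}_2$--$\mathscr{H}_3$ versus $\mathscr{H}_i$--$\mathscr{H}_i$ subfactors.
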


The fusion category $\mathscr{H}_3$ mentioned above is new.  It can be described most succinctly as the category of $(1+\alpha+\alpha^2)$-bimodule objects in $\mathscr{H}_2$ where $\alpha$ and $\alpha^2$ are the nontrivial invertible objects in $\mathscr{H}_2$.  It can also be described via an intermediate subfactor construction.  Namely, consider the Haagerup subfactor, and then look at the reduced subfactor constructed from the middle vertex of the principal graph.  This subfactor has three invertible objects at depth $2$ and thus has an intermediate of index $3$.  The other intermediate subfactor has $\mathscr{H}_3$ as one of its even parts.

Note that although the fusion categories $\mathscr{H}_2$ and $\mathscr{H}_3$ have the same Grothendieck rings, they are not the only fusion categories with this Grothendieck ring.  In particular both fusion categories have non-unitary Galois conjugates.  Furthermore according to \cite{1006.1326} there may be unitary fusion categories which differ from $\mathscr{H}_2$ by twisting by a class in $H^3(\mathbb{Z}/3\mathbb{Z}, \mathbb{C}^*)$ which are not Morita equivalent to $\mathscr{H}_2$ (since their centers would have different modular invariants).  The classification of all fusion categories whose Grothendieck ring agrees with that of $\mathscr{H}_2$ remains an interesting open question (which was suggested to us by Pavel Etingof).

%We rapidly describe the relationships between these three questions, as explained by Victor Ostrik.  A simple algebra object in $\mathscr{C}$ corresponds to a choice of object in a module category over $\mathscr{C}$.  Thus there are several simple algebra objects corresponding to each module category.  Every indecomposable module category $\mathscr{M}$ over $\mathscr{C}$ gives a Morita equivalence between $\mathscr{C}$ and the dual $\mathscr{C}_\mathscr{M}^*$ and every Morita equivalence between $\mathscr{C}$ and $\mathscr{D}$ comes from a module category $\mathscr{M}$ over $\mathscr{C}$ and a choice of isomorphism between $\mathscr{D}$ and $\mathscr{C}_\mathscr{M}^*$ up to inner automorphism of $\mathscr{D}$.  Thus understanding Morita equivalences requires understanding the indecomposable module categories and the outer automorphisms of $\mathscr{C}$ and its Morita equivalent categories.

The Brauer-Picard groupoid of a fusion category $\mathscr{C}$ has points for every fusion category $\mathscr{D}$ which is Morita equivalent to $\mathscr{C}$ and an arrow for every Morita equivalence between $\mathscr{C}$ and $\mathscr{D}$ (up to equivalence of bimodule categories).  The Brauer-Picard groupoid is important in understanding the extension theory of $\mathscr{C}$ \cite{0909.3140}, and is essentially the same as Ocneanu's notion of ``maximal atlas.''  Morita equivalences between $\mathscr{C}$ and $\mathscr{D}$ correspond to a choice of module category over $\mathscr{C}$ and a choice of isomorphism between $\mathscr{D}$ and $\mathscr{C}_\mathscr{M}^*$ up to inner automorphism of $\mathscr{D}$. Thus understanding Morita equivalences requires understanding the module categories and the outer automorphisms of $\mathscr{C}$ and its Morita equivalent categories.

\begin{theorem}
The Brauer-Picard groupoid of $\mathscr{H}_1$ has three objects $\mathscr{H}_1$, $\mathscr{H}_2$, $\mathscr{H}_3$, and between any two (not necessarily distinct) of these there is exactly one Morita equivalence.  In particular, none of the $\mathscr{H}_i$ has an outer automorphism.
\end{theorem}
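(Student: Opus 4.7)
Using the framework recalled in the introduction, a Morita equivalence from $\mathscr{H}_i$ to $\mathscr{H}_j$ is the data of a simple $\mathscr{H}_i$-module category $\mathscr{M}$ together with an identification $\mathscr{H}_j\cong(\mathscr{H}_i)^*_\mathscr{M}$ modulo inner automorphism of $\mathscr{H}_j$. Writing $n_{ij}$ for the number of simple $\mathscr{H}_i$-module categories whose dual fusion category is $\mathscr{H}_j$, the number of Morita equivalences between $\mathscr{H}_i$ and $\mathscr{H}_j$ is thus $n_{ij}\cdot|\mathrm{Out}(\mathscr{H}_j)|$. The plan is therefore to establish separately that $n_{ij}=1$ for all $i,j$ and that $\mathrm{Out}(\mathscr{H}_i)$ is trivial for each $i$; the second assertion of the theorem (no outer automorphisms) is then immediate, since $\mathrm{Out}(\mathscr{H}_i)$ embeds into the Brauer--Picard group, which will have a single element at each object.

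For the first step I would argue by a counting argument. By the preceding theorems each $\mathscr{H}_i$ has exactly three simple module categories, so $\sum_j n_{ij}=3$, and the regular module category already gives $n_{ii}\ge 1$. The Haagerup subfactor itself shows $n_{12}=n_{21}\ge 1$, while the subfactors exhibited in the preceding theorems realizing the Morita equivalences $\mathscr{H}_1\sim\mathscr{H}_3$ and $\mathscr{H}_2\sim\mathscr{H}_3$ give $n_{13}=n_{31}\ge 1$ and $n_{23}=n_{32}\ge 1$. Combined with the row sums being three, this forces $n_{ij}=1$ for every pair $(i,j)$. To make the identifications explicit one computes the dual $(\mathscr{H}_i)^*_\mathscr{M}$ from the algebra object $A\in\mathscr{H}_i$ producing $\mathscr{M}$, by writing down $A$-bimod-$\mathscr{H}_i$ and matching its dimensions and fusion graph against the known fusion data for $\mathscr{H}_1,\mathscr{H}_2,\mathscr{H}_3$.

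For the second step, I would reduce to a fusion-ring analysis: a tensor autoequivalence permutes simples preserving dimensions, so the only candidates for nontrivial outer symmetries of $\mathscr{H}_2$ and $\mathscr{H}_3$ amount to inverting the $\mathbb{Z}/3$ of invertibles and correspondingly permuting the three objects of dimension $\frac{3+\sqrt{13}}{2}$, while $\mathscr{H}_1$ has even fewer candidates. Each such candidate must then be shown not to lift to a tensor autoequivalence, by checking that the associator data obstructs it. A cleaner alternative, which I would favor, is to use that the three $\mathscr{H}_i$ share a common Drinfeld center $Z$ and that $\mathrm{BrPic}(\mathscr{H}_i)\cong\mathrm{Aut}^{br}(Z)$; once $Z$ is made explicit (via Izumi's description of the Haagerup center) one verifies directly that $\mathrm{Aut}^{br}(Z)=1$, which simultaneously delivers $n_{ij}=1$ and the triviality of $\mathrm{Out}(\mathscr{H}_i)$ for all three fusion categories. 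The main obstacle is precisely this last computation: identifying duals of module categories is essentially mechanical given the module category classification, whereas ruling out nontrivial braided autoequivalences of $Z$ (equivalently, outer tensor autoequivalences of each $\mathscr{H}_i$) is where the real work concentrates.
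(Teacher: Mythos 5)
Your counting framework is exactly the paper's: Morita equivalences between $\mathscr{H}_i$ and $\mathscr{H}_j$ are counted by (number of module categories with the right dual)$\times|\mathrm{Out}(\mathscr{H}_j)|$, and the identity $n_{ij}=1$ follows from the classification of module categories together with the constancy of the total count. (The paper gets $n_{ij}=1$ slightly differently --- from $n_{2j}=1$, which is established by direct classification of minimal algebra objects in $\mathscr{H}_2$, it deduces that the common count $M$ equals $|\mathrm{Out}(\mathscr{H}_j)|$ and hence $n_{ij}=M/|\mathrm{Out}(\mathscr{H}_j)|=1$; your version, using the row sums $\sum_j n_{ij}=3$ plus $n_{ij}\ge 1$, is fine provided you are careful that ``each $\mathscr{H}_i$ has exactly three simple module categories'' for $i=1,3$ is itself a consequence of this counting in the paper, not an independent input.)

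The genuine gap is the triviality of $\mathrm{Out}(\mathscr{H}_i)$, which you correctly identify as where the real work concentrates but do not actually carry out: you offer two strategies (a fusion-ring-plus-associator obstruction analysis, and a computation of $\mathrm{Aut}^{br}(Z)$ via the center) without executing either, and both are substantial computations. The paper's actual argument is different from both and is quite specific to the Haagerup subfactor: it first shows $\mathrm{Out}(\mathscr{H}_1)\cong\mathrm{Out}(\mathscr{H}_2)\cong\mathrm{Out}(\mathscr{H}_3)$ (again by the counting, since there is a unique module category over $\mathscr{H}_i$ with dual $\mathscr{H}_i$), and then proves $\mathrm{Aut}(\mathscr{H}_1)=1$ via Peters's planar algebra presentation. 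The key steps are: by $3$-supertransitivity any tensor autoequivalence of $\mathscr{H}_1$ is naturally isomorphic to one acting trivially on the algebra object $1+\eta$, hence induces an automorphism of the Haagerup planar algebra; any such automorphism preserves the lowest-weight space spanned by the generator $T$, so sends $T\mapsto\pm T$ because $T^2=\frac12 f^{(4)}$; and the third twisted moments of $T$ and $-T$ differ, forcing $T\mapsto T$ and hence the identity. Your route via $\mathrm{Aut}^{br}(Z)$ would in principle deliver the whole theorem at once, but making the center explicit and ruling out nontrivial braided autoequivalences is a harder computation than anything in the paper, and your route via the associator is essentially the tedious $6j$-symbol verification the authors explicitly chose not to include. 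As written, the proposal does not contain a proof of the statement's second assertion.
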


The main technique of this paper is to move back-and-forth between algebra objects and module categories to exploit the richer combinatorial structure of the former and the richer algebraic structure of the latter.

In Section 2 we recall some background information on fusion categories, module categories, algebra objects, and subfactors. In Section 3 we find all fusion categories Morita equivalent to the Haagerup fusion categories, and classify all algebra objects in and module categories over these fusion categories. In Section 4 we find the full intermediate subfactor lattices for all subfactors arising from the quantum subgroups of the Haagerup fusion categories. In Section 5 we show that the outer automorphisms groups of the Haagerup fusion categories are trivial and compute the Brauer-Picard groupoid. In Section 6 we discuss how our results generalize to the Izumi subfactors, in particular the Izumi subfactor corresponding to $\mathbb{Z} / 5\mathbb{Z} $.

We would like to thank David Penneys who pointed out to us that we were both working on this problem.  Noah Snyder would like to thank Dmitri Nikshych who suggested this question at the Shanks workshop on Subfactors and Fusion Categories at Vanderbilt, and Dietmar Bisch and Richard Burstein for hosting that conference.  Pinhas Grossman would like to thank David Evans for helpful conversations, and Noah Snyder would like to thank David Jordan and Emily Peters.  Noah Snyder was supported by an NSF Postdoctoral Fellowship at Columbia University.  Pinhas Grossman was supported by a Marie Curie fellowship from the EU Noncommutative Geometry Network at Cardiff University, and later by a fellowship at IMPA in Brazil. He was also partially supported by NSF grant DMS-0801235. 

\section{Background}

\subsection{Translating between fusion category language and subfactor language}

The goal of this subsection is to explain how to translate between fusion category language and subfactor language.  It is a cheat sheet for the rest of the background section, and we hope that it will enable people who are only familiar with one of the languages to understand the rest of the paper.  The key point is that if you have a subfactor $N \subset M$ then you have a tensor category of all $N$-$N$ bimodules together with an algebra $M$ which can be thought of as an $N$-$N$ bimodule.  Thus subfactors roughly correspond to algebra objects in tensor categories.

The table below should be taken with three caveats, two of them technical and one of them important.  First, since factors are $C^*$ algebras, the corresponding tensor categories are always unitary and one may need to impose certain compatibility conditions with the $*$-structure.  Second, the word ``fusion'' means ``finite depth'' in the context of subfactors, so for infinite depth subfactors you should relax the adjective ``fusion.''  The important caveat will be explained below.

\begin{center}\begin{tabular}{l|l}
Subfactors & Fusion categories \\
\hline
\hline
$N \subset M$ & A fusion category $\mathscr{C}$ with an algebra object $A$ \\
\hline
The standard invariant & The $2$-category consisting of all $1$-$1$, $1$-$A$,  $A$-$1$, \\& and $A$-$A$ bimodule objects in $\mathscr{C}$ together with \\& a choice of $1$-morphism $A$ as an $A$-$1$ bimodule \\
\hline
Principal even part & $\mathscr{C}$ \\
\hline
Odd part & The collection of $A$-modules (called $\mathscr{M}$) \\ & as a right module category over $\mathscr{C}$ \\
\hline
Dual even part & The dual fusion category $\mathscr{C}_\mathscr{M}^*$, or equivalently \\& the fusion category of $A$-$A$ bimodules. \\
\hline
The principal graphs & The fusion graphs for tensoring with $A$ \\
\hline
$Q$-system & Algebra object \\
\hline
Tensoring odd objects & Internal Hom \\
\hline
\end{tabular} \end{center}

\vspace{.1in}

Now for the important caveat.  Notice that subfactors always correspond to a category theoretic construction {\em together with} a choice of object $A$.  Furthermore, notice that the category of {\em all} $N$-$N$ bimodules is quite large and unwieldy.  Thus, in subfactor theory, one always restricts to the subcategory tensor generated by $A$.  However, once you fix a fusion subcategory inside all $N$-$N$ bimodules (as we do throughout this paper), it then becomes unnatural to look only at the subcategory tensor generated by $A$.  Much of the power of this paper comes from using constructions which are natural on the fusion category side, but somewhat unnatural on the subfactor side because the $Q$-system doesn't tensor generate.  For example, if you look at the algebra object $1$ inside $\mathscr{C}$, then the category of all $1$-$1$ bimodules is $\mathscr{C}$ itself, but the corresponding subfactor is trivial.   We will often prove the uniqueness of a nontrivial $Q$-system by proving the uniqueness of a simpler $Q$-system which doesn't tensor generate.  To our knowledge, these arguments do not correspond to any arguments already appearing in the subfactor literature.

\subsection{Fusion categories, module categories, and bimodule categories}

In this subsection we sketch the definitions of fusion categories, module categories, bimodule categories, outer automorphisms and the Brauer-Picard groupoid.  For more details see \cite{MR2183279, MR1976459, 0909.3140}.

\begin{definition} \cite{MR2183279}
A fusion category over  $\mathbb{C}$ is a $\mathbb{C}$-linear rigid semisimple monoidal category with finitely many simple objects (up to isomorphism) and finite-dimensional morphism spaces, such that the identity object is simple.  
\end{definition}

Recall that a monoidal functor $\mathscr{F}:\mathscr{C} \rightarrow \mathscr{D}$ is a pair a functor $\mathscr{F}$ together with a binatural transformation $\sigma_{X,Y}: \mathscr{F}(X \otimes Y) \rightarrow  \mathscr{F}(X) \otimes  \mathscr{F}(Y)$ satisfying a certain naturality condition with the associator.  Somewhat nonstandardly we call an invertible monoidal functor an isomorphism (or an automorphism if it is an endofunctor) rather than an ``equivalence."  The main reason for this is that we want to avoid confusion with {\bf Morita} equivalence, also it is harmless since the usual definition of ``isomorphism" in category theory is well-known to be useless.  An automorphism is called inner if it is given by conjugation by an invertible object.  The group of outer automorphisms is the quotient of the group of all automorphisms by the subgroup of inner automorphisms.

The Grothendieck ring or fusion ring of a fusion category is the Grothendieck group of the fusion category (that is formal differences of objects modulo the natural relations) with multiplication by tensor product.

\begin{definition} \cite{MR2183279}
A dimension function is an assignment of a complex number to every object of $\mathbb{C}$ which is multiplicative under tensor product and additive under direct sum.  There is a unique dimension function which sends all non-zero objects to a positive real number, called the Frobenius-Perron dimension.  It assigns to each object $X$ the Frobenius-Perron eigenvalue of the matrix of left multiplication by $[X]$ in the Grothendieck ring.
\end{definition}

\begin{definition} \cite[Def. 6]{MR1976459}
A left module category over $\mathscr{C}$ is a $\mathbb{C}$-linear category $\mathscr{M}$ together with a biexact bifunctor $\otimes: \mathscr{C} \times \mathscr{M} \rightarrow \mathscr{M}$ and natural associtivity and unit isomorphisms satisfying certain coherence conditions.  A right module category over $\mathscr{C}$ and a $\mathscr{C}$-$\mathscr{D}$ bimodule category are defined similarly.
\end{definition}

As with fusion categories, a functor of module categories is a functor of the underlying category together with a binatural transformation $c_{X,M}: F(X \otimes M) \rightarrow F(X) \otimes F(M)$ satisfying certain compatibility relations.  See \cite[Def. 7]{MR1976459}.

\begin{definition}
A module category is called indecomposable if it cannot be written as the direct sum of two module categories and simple if it is semisimple and indecomposable.
\end{definition}

\begin{definition} \cite[\S3.2]{MR1976459}
Let $\mathscr{M}$ be a semisimple module category over a fusion category $\mathscr{C}$.
Let $M_1$ and $M_2$ be two objects of $\mathscr{M}$.  Their internal Hom, $\underline{Hom}(M_1, M_2)$ is the (unique up to unique isomoprhism) object of $\mathscr{C}$ which represents the functor $X \mapsto Hom(X \otimes M_1, M_2)$.
\end{definition}

As you might expect given the name, you can compose internal Homs: $$\underline{Hom}(M_2 , M_3)\otimes \underline{Hom}(M_1, M_2) \rightarrow \underline{Hom}(M_1, M_3).$$

\begin{definition}
Given a semisimple fusion category $\mathscr{C}$ and a semisimple module category $\mathscr{M}$, we can define the Frobenius-Perron dimension of objects in $\mathscr{M}$ by $\dim (M) = \sqrt{\dim(\underline{Hom}(M,M))}$.
\end{definition}

One important notion in the theory of rings and modules is that of Morita equivalence.  Two rings are Morita equivalent if there is an invertible bimodule between them.  Since fusion categories categorify rings and module categories categorify modules we should have a categorified version of Morita equivalence of fusion categories.

\begin{definition}
A Morita equivalence between $\mathscr{C}$ and $\mathscr{D}$ is an invertible $\mathscr{C}$-$\mathscr{D}$ bimodule category, i.e. $\mathscr{C}$-$\mathscr{D}$ bimodule category $\mathscr{M}$ such that there exists $\mathscr{M}'$ a $\mathscr{D}$-$\mathscr{C}$ such that $\mathscr{M} \otimes_\mathscr{D} \mathscr{M}'$ is equivalent to $\mathscr{C}$ as a $\mathscr{C}$-$\mathscr{C}$ bimodule category and $\mathscr{M}' \otimes_\mathscr{C} \mathscr{M}$ is equivalent to $\mathscr{D}$ as a $\mathscr{D}$-$\mathscr{D}$ bimodule category.  See \cite{MR1966524,0909.3140} for more details.
\end{definition}

The collection of all Morita equivalences naturally forms a $3$-groupoid.  We can think of it as just a groupoid by modding out by equivalences of bimodule categories.

\begin{definition}
The Brauer-Picard groupoid of a fusion category $\mathscr{C}$ has points for every $\mathscr{D}$ which is Morita equivalent to $\mathscr{C}$ and an arrow for every Morita equivalence between $\mathscr{C}$ and $\mathscr{D}$ (up to equivalence of bimodule categories).  
\end{definition}

The Brauer-Picard groupoid is important in understanding the extension theory of $\mathscr{C}$ \cite{0909.3140}.

If $R$ is a ring and $M$ is an $R$-module, then putting an $R$-$S$ bimodule structure on $M$ is the same as giving a map of rings $S \rightarrow \mathrm{End}_R(M)$.  Furthermore, $M$ is invertible if and only if the corresponding map is an isomorphism.  Two such bimodules are equivalent if and only if the two maps $S \rightarrow \mathrm{End}_R(M)$ differ by conjugation by an invertible element in $S$.  Hence a Morita equivalence between $R$ and $S$ corresponds (non-canonically) to a pair consisting of an $R$-module $M$ whose commutant is $S$ and an outer automorphism of $S$.

The same story holds in the categorified setting as well.  If $\mathscr{M}$ is a left $\mathscr{C}$ module category, then giving $\mathscr{M}$ the structure of a $\mathscr{C}$--$\mathscr{D}$ bimodule category is the same thing as giving a monoidal functor $\mathscr{D} \rightarrow \mathscr{C}_\mathscr{M}^*$ (where $\mathscr{C}_\mathscr{M}^*$ is the category of $\mathscr{C}$ module category endofunctors of $\mathscr{M}$).  Furthermore, a $\mathscr{C}$--$\mathscr{D}$ bimodule category is invertible if and only if the monoidal functor $\mathscr{D} \rightarrow \mathscr{C}_\mathscr{M}^*$ is an equivalence of monoidal categories.    Finally, two $\mathscr{C}$--$\mathscr{D}$ bimodule categories are equivalent, if and only if the corresponding functors $\mathscr{D} \rightarrow \mathscr{C}_\mathscr{M}^*$ differ by an inner automorphism (that is by conjugation by an invertible object in $\mathscr{D}$).  Hence, Morita equivalences between $\mathscr{C}$ and $\mathscr{D}$ correspond (non-canonically) to a pair of a $\mathscr{C}$ module category $\mathscr{M}$ and an outer automorphism of $\mathscr{D}$.

The above result is important because it allows us to easily count the number of Morita equivalences between two categories.  In particular, we will prove that $\mathscr{H}_2$ has exactly three module categories and that their duals are $\mathscr{H}_1$, $\mathscr{H}_2$, and $\mathscr{H}_3$.  It follows that the number of Morita equivalences between $\mathscr{H}_2$ and $\mathscr{H}_i$ is just the number of outer automorphisms of $\mathscr{H}_i$.  Thus, the number of outer automorphisms is the same for each of the $\mathscr{H}_i$.  In particular, we will prove that there are no nontrivial outer automorphisms of $\mathscr{H}_2$ and thus that there are no nontrivial outer automorphisms of $\mathscr{H}_1$ and $\mathscr{H}_3$.  Hence, using the same counting argument again, there is exactly one module category over each of the $\mathscr{H}_i$ whose dual is each of the $\mathscr{H}_j$.

\subsection{Algebra objects}

In this subsection we sketch the definition of an algebra object in a fusion category and the relationship between algebra objects and module categories.  For more details see \cite[\S 3]{MR1976459}.

\begin{definition}
An algebra object in a fusion category  $\mathscr{C}$ is an object $A$ together with a unit morphism $\mathbf{1} \rightarrow A$ and a multiplication morphism $A \otimes A \rightarrow A$ satisfying the usual compatibility relations (associativity, left unit, right unit).
\end{definition}

A left module object over $A$ is defined in the obvious way, namely it is an object $M$ together with a morphism $A \otimes M \rightarrow M$ satisfying the usual compatibility relations.  Similarly we can define a right module object over an algebra $A$, and an $A$-$B$ bimodule object over two algebras.

A particularly important example of an algebra object is the internal endomorphisms $\underline{Hom}(M, M)$ with composition as the algebra structure.  The unit structure comes from the identity morphism from $M$ to $M$ via the definition of the internal hom.

Note that if $A$ is an algebra object in $\mathscr{C}$ then the category of left $A$-modules in $\mathscr{C}$ is a \emph{right} $\mathscr{C}$-module category.  Similarly the category of right $A$-modules in $\mathscr{C}$ is a left $\mathscr{C}$-module category.  A key theorem of Ostrik's gives a converse to this fact.

\begin{theorem} \cite[Theorem 1]{MR1976459}
Let $\mathscr{M}$ be a simple left (resp. right) module category over a fusion category $\mathscr{C}$ and let $X$ be any simple object in $\mathscr{M}$, then $\mathscr{M}$ is equivalent as a module category to the category of right (resp. left) $\underline{Hom}(X, X)$ modules in $\mathscr{C}$.
\end{theorem}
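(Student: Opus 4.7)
The plan is to construct an explicit module equivalence $F$ from $\mathscr{M}$ to the category of right $A$-modules in $\mathscr{C}$ in the left-module case (the right-module case is symmetric), where $A = \underline{Hom}(X, X)$ is given the algebra structure whose unit $\mathbf{1} \to A$ represents $\mathrm{id}_X$ and whose multiplication is the composition of internal Homs. Set $F(Y) = \underline{Hom}(X, Y)$, with right $A$-action supplied by the composition $\underline{Hom}(X, Y) \otimes \underline{Hom}(X, X) \to \underline{Hom}(X, Y)$. To upgrade $F$ to a $\mathscr{C}$-module functor one needs a natural isomorphism $\underline{Hom}(X, C \otimes Y) \cong C \otimes \underline{Hom}(X, Y)$ for $C \in \mathscr{C}$ and $Y \in \mathscr{M}$; by Yoneda and rigidity of $\mathscr{C}$, both sides represent the functor $Z \mapsto \mathrm{Hom}_\mathscr{M}(C^* \otimes Z \otimes X, Y)$, and compatibility with the associators is a bookkeeping check.

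Fully faithfulness comes next. The representing property gives $\mathrm{Hom}_\mathscr{C}(\underline{Hom}(X, Y), \underline{Hom}(X, Z)) \cong \mathrm{Hom}_\mathscr{M}(\underline{Hom}(X, Y) \otimes X, Z)$, and precomposition with the evaluation $\underline{Hom}(X, Y) \otimes X \to Y$ yields an injection $\mathrm{Hom}_\mathscr{M}(Y, Z) \hookrightarrow \mathrm{Hom}_\mathscr{C}(\underline{Hom}(X, Y), \underline{Hom}(X, Z))$. Unwinding the $A$-linearity condition then shows that its image is exactly the space of $A$-linear maps: an $A$-linear map between these internal Homs is forced to factor through the evaluation to $Y$ before hitting $Z$.

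The main obstacle is essential surjectivity. I plan to construct a candidate inverse $G$ by a relative tensor product: for a right $A$-module $N$, let $G(N)$ be the coequalizer in $\mathscr{M}$ of the two maps $N \otimes A \otimes X \rightrightarrows N \otimes X$, one given by the right $A$-action on $N$ and the other by the canonical evaluation $A \otimes X = \underline{Hom}(X, X) \otimes X \to X$. Semisimplicity of $\mathscr{M}$ guarantees the coequalizer exists as a direct summand. The unit and counit isomorphisms $Y \cong G(F(Y))$ and $F(G(N)) \cong N$ then follow by writing both sides as coequalizers of a reflexive pair, using that $F$ preserves these colimits together with the structural isomorphism from the first paragraph. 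Indecomposability of $\mathscr{M}$ is critical here: it ensures every simple object of $\mathscr{M}$ is a summand of $C \otimes X$ for some $C \in \mathscr{C}$, so that $\underline{Hom}(X, -)$ does not vanish identically on any direct summand of $\mathscr{M}$; without indecomposability, $F$ would only identify $\mathscr{M}$ with the sub-module-category containing $X$, and the counit would fail to be an isomorphism on $A$-modules landing in an unseen component.
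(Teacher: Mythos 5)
This statement is quoted from Ostrik \cite[Theorem 1]{MR1976459} as background; the paper gives no proof of it, so there is nothing internal to compare against. Your proposal follows the standard route of Ostrik's original argument: $F=\underline{Hom}(X,-)$ with the composition action, the module-functor structure $\underline{Hom}(X,C\otimes Y)\cong C\otimes\underline{Hom}(X,Y)$ obtained by Yoneda and rigidity, and a candidate inverse $G=-\otimes_A X$ built as a coequalizer. The overall strategy is correct, and your closing observation about indecomposability (every simple of $\mathscr{M}$ is a summand of some $C\otimes X$, so $\underline{Hom}(X,-)$ vanishes nowhere) is exactly the right use of the hypothesis.

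There is one soft spot: the counit isomorphism $\epsilon_Y\colon\underline{Hom}(X,Y)\otimes_A X\to Y$ is the crux of the theorem, and your justification (``writing both sides as coequalizers of a reflexive pair'') only applies to the unit. For the unit the argument is fine: $\underline{Hom}(X,-)$ is additive out of a semisimple category, hence exact, so it carries the defining coequalizer of $G(N)$ to the split bar coequalizer computing $N\otimes_A A\cong N$. But $Y$ is not presented a priori as a coequalizer, so the same reasoning does not establish $\epsilon_Y$. The standard repair: surjectivity of the evaluation $\underline{Hom}(X,Y)\otimes X\to Y$ (from indecomposability) makes $F$ faithful; $F$ is also exact, hence reflects isomorphisms; and the triangle identity $F(\epsilon_Y)\circ\eta_{F(Y)}=\mathrm{id}$ together with the already-proved invertibility of $\eta$ forces $F(\epsilon_Y)$, hence $\epsilon_Y$, to be invertible. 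Relatedly, your fully-faithfulness step --- that an $A$-linear map $\underline{Hom}(X,Y)\to\underline{Hom}(X,Z)$ ``is forced to factor through the evaluation'' --- is precisely the statement that the evaluation coequalizes the $A$-action pair with $Y$ as the coequalizer, i.e.\ it secretly uses the counit isomorphism. So that step should be deduced after, not before, the essential-surjectivity machinery; as written the logical order is circular, though all the needed pieces are present and assemble correctly once reordered.
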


Note that a simple algebra $A$ is the internal endomorphisms of itself as an $A$-module, so the simple algebras in $\mathscr{C}$ are precisely the $\underline{Hom}(X,X)$ for $X$ in some simple module category over $\mathscr{C}$.

\begin{definition}
We call a simple algebra object $A$ in $\mathscr{C}$ \emph{minimal} if its Frobenius-Perron dimension is minimal among all $\underline{Hom}(X, X)$ where $X$ varies over simple objects in the category of right $A$-modules.
\end{definition}

\begin{corollary}
Every simple module category can be realized as the category of modules over a minimal algebra object.
\end{corollary}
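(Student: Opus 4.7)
The plan is to realize the statement as a direct consequence of Ostrik's theorem (the theorem immediately preceding the corollary), combined with a minimization argument over the finitely many isomorphism classes of simple objects in $\mathscr{M}$.

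First I would fix notation: let $\mathscr{M}$ be a simple (i.e.\ semisimple and indecomposable) left module category over the fusion category $\mathscr{C}$. Because $\mathscr{C}$ has finitely many simple objects, a simple left $\mathscr{C}$-module category has only finitely many isomorphism classes of simple objects (this is standard, see the discussion in \cite{MR1976459}). Using the Frobenius-Perron dimension defined by $\dim(M) = \sqrt{\dim \underline{Hom}(M,M)}$, I can therefore choose a simple object $X_0 \in \mathscr{M}$ whose dimension is minimal among simple objects of $\mathscr{M}$, which is the same as choosing $X_0$ so that $\dim \underline{Hom}(X_0,X_0)$ is minimal.

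Next I would set $A := \underline{Hom}(X_0, X_0)$, equipped with its canonical algebra structure coming from composition of internal Homs. Applying Ostrik's theorem to $X_0$ identifies $\mathscr{M}$, as a left $\mathscr{C}$-module category, with the category of right $A$-modules in $\mathscr{C}$. Under this identification the simple objects of $\mathscr{M}$ correspond exactly to the simple right $A$-modules, and the internal Homs computed in $\mathscr{M}$ agree with those computed in the category of right $A$-modules.

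It then remains to verify minimality. By the definition in the excerpt, $A$ is minimal if $\dim A$ is the minimum of $\dim \underline{Hom}(X,X)$ as $X$ ranges over simple right $A$-modules. Via the equivalence above, this set is precisely $\{\dim \underline{Hom}(X,X) : X \in \mathscr{M} \text{ simple}\}$, and by the choice of $X_0$ the value $\dim A = \dim \underline{Hom}(X_0,X_0)$ is the smallest element of this finite set. Hence $A$ is minimal, and $\mathscr{M}$ has been realized as the category of modules over a minimal algebra object, as required. There is no real obstacle in this argument; the only point where one must be slightly careful is ensuring that the sidedness in Ostrik's theorem matches the sidedness in the definition of minimal (left module category corresponds to right modules over $A$), which the statement in the excerpt handles explicitly.
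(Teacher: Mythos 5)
Your proof is correct and is exactly the argument the paper leaves implicit: choose a simple object $X_0$ of $\mathscr{M}$ minimizing $\dim\underline{Hom}(X_0,X_0)$, apply Ostrik's theorem to get $\mathscr{M}\simeq \mathrm{Mod}\text{-}\underline{Hom}(X_0,X_0)$, and observe that minimality holds because the simple right $A$-modules are exactly the simple objects of $\mathscr{M}$ under this equivalence. The sidedness check and the finiteness of the set of simple objects are handled correctly, so there is nothing to add.
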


If $X$ is an invertible object in a fusion category $\mathscr{C}$, then conjugation by $X$ gives an automorphism of the fusion category $\mathscr{C}$.  Thus if $A$ is an algebra object, then so is $X \otimes A \otimes X^*$ (here the multiplication is just given by contracting the middle $X^* \otimes X$ and then multiplying in $A$).

\begin{lemma} \label{lem:inneraut}
If $\mathscr{C}$ is a fusion category, $A$ is an algebra object, and $X$ is an invertible object, then the category of left (resp. right) $A$-modules and the category of left (resp. right) $X \otimes A \otimes X^*$-modules are isomorphic as right (resp. left) module categories.
\end{lemma}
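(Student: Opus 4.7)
The plan is to build an explicit isomorphism of right $\mathscr{C}$-module categories by left tensoring with $X$. Concretely, define a functor
\[
F : A\text{-mod}_\mathscr{C} \longrightarrow (X \otimes A \otimes X^*)\text{-mod}_\mathscr{C}
\]
on objects by $F(M) = X \otimes M$. I would equip $X \otimes M$ with a left $X \otimes A \otimes X^*$-module structure via the composite
\[
(X \otimes A \otimes X^*) \otimes (X \otimes M) \longrightarrow X \otimes A \otimes (X^* \otimes X) \otimes M \longrightarrow X \otimes A \otimes M \longrightarrow X \otimes M,
\]
where the first map reassociates, the second uses the evaluation $X^* \otimes X \to \mathbf{1}$ coming from the duality on the invertible object $X$, and the third is $\mathrm{id}_X$ tensored with the action $A \otimes M \to M$. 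On morphisms $F(f) = \mathrm{id}_X \otimes f$. The first step is then to check that this really satisfies the associativity and unit axioms for a module action; this reduces to the rigidity axioms for $X$ together with the associativity and unit axioms for $A$ acting on $M$, and is a routine coherence diagram chase in the fusion category.

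Next I would produce an inverse $G(N) = X^* \otimes N$, equipped analogously with an $A$-module structure via evaluation of $X \otimes X^* \to \mathbf{1}$. Since $X$ is invertible the unit and counit of the duality $X \dashv X^*$ are isomorphisms, so the standard triangle identities give natural isomorphisms $G \circ F \cong \mathrm{id}$ and $F \circ G \cong \mathrm{id}$; one needs to check these isomorphisms are compatible with the respective module actions, which again follows from the rigidity axioms.

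Finally, I would verify that $F$ is a functor of right $\mathscr{C}$-module categories, i.e.\ comes with coherent natural isomorphisms $F(M \otimes Y) \cong F(M) \otimes Y$ for $Y \in \mathscr{C}$. These are supplied by the associator $X \otimes (M \otimes Y) \cong (X \otimes M) \otimes Y$, and the pentagon/compatibility axioms follow immediately from the pentagon axiom in $\mathscr{C}$. The right $A$-module (resp.\ $\mathscr{C}$--$A$-bimodule) case is formally identical, replacing left tensoring with $X$ by right tensoring with $X^*$, or equivalently applying the argument to the opposite category. The only real obstacle is bookkeeping the associators and the evaluation/coevaluation maps correctly, but no substantive content beyond rigidity of $X$ is needed.
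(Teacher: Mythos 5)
Your proposal is correct and matches the paper's own proof essentially verbatim: the paper also defines the functor by $V \mapsto X \otimes V$ with the module action given by contracting the middle $X^* \otimes X$ factor, takes $V \mapsto X^* \otimes V$ as the inverse, and uses the associator for the module-category structure. You have simply spelled out the coherence checks the paper leaves implicit.
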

\begin{proof}
The functor is given by $V \mapsto X \otimes V$ (resp. $V \mapsto V \otimes X^*$) where the action of $X\otimes A \otimes X^*$ on $X \otimes V$ is given by contracting the middle factor and then acting $A$ on $V$.  The binatural transformation is just given by the associator $(X\otimes V) \otimes W \rightarrow X\otimes (V \otimes W)$.  The inverse functor is given by $V \mapsto X^* \otimes V$.  
\end{proof}

Thus in order to classify all simple module categories it is enough to classify minimal algebra objects $A$ up to inner automorphism.  This result is useful because $\mathscr{H}_2$ has several invertible objects.

As pointed out to us by the referee, Lemma \ref{lem:inneraut} does not hold for arbitrary automorphisms of $\mathscr{C}$.  For example, if $\mathscr{C}$ is the category of $G$-graded vector spaces and $A$ is the group ring of $H \subset G$, then any subgroup conjugate to $H$ gives the same module category (namely $\mathrm{Vec}(G/H)$), while outer automorphisms (either coming from outer automorphisms of $G$ itself, or coming from a nontrivial $2$-cocycle) typically give a different module category.  In the special case where $G$ is abelian (so $\mathrm{Vec}(G) \cong \mathrm{Rep}(\hat{G})$) see  \cite[Theorem 2]{MR1976459}.

\subsection{Subfactors} \label{sec:subfactorvsalgebra}

In this subsection we recall the relationship between subfactors and algebra objects.  For more details see \cite{MR1257245, MR1444286, MR1966524, MR1976459}.

A subfactor is a unital inclusion $N \subset M$ of von Neumann algebras with trivial centers.  The index measures the dimension of $M$ as an $N$-module.  We will only consider subfactors of a Type II$_1$ factor with finite index.  We call $N \subset M$ irreducible if $M$ is irreducible as an $M$-$N$ bimodule.  Given a subfactor $N \subset M$ its \emph{principal even part} is the monoidal category of $N$-$N$ bimodules which appear as summands of tensor powers of ${}_NM_N$, the \emph{dual even part} is the monoidal category of $M$-$M$ bimodules which appear as summands of tensor powers of ${}_M M\otimes_N M_{M}$.   These two categories have the structure of $C^*$-tensor categories; the subfactor is said to have \emph{finite depth} if they are fusion categories, i.e. if there are only finitely many simple objects up to isomorphism. A $C^*$-fusion category is also called a unitary fusion category.

Furthermore, given a subfactor $N \subset M$ with even parts $\mathscr{C}$ and $\mathscr{D}$ we have a Morita equivalence between $\mathscr{C}$ and $\mathscr{D}$ given by the category of $N-M$ bimodules generated by ${}_N M_M $. 

Let $\kappa$ denote the $M-N$ bimodule ${}_M M _N$. We will often use sector notation: objects are labeled by Greek letters, square brackets denote isomorphism classes,  and tensor symbols are omitted, so that e.g. $\bar{\kappa} \kappa $ means ${}_N M _M \otimes_M {}_M M _N $. Then $\bar{\kappa} \kappa $ is an algebra object in $\mathscr{C}$, and the index of the subfactor is the dimension of the algebra object. Conversely, any rigid $C^*$ tensor category with countably many simple objects and simple identity object arises as a category of finite index $N$-$N$ bimodules over a factor \cite{MR1444286, MR1960417}. Moreover, any algebra object whose multiplication is a scalar multiple of a coisometry can be realized as $\bar{\kappa} \kappa $ for some subfactor $N \subseteq M$; such algebra objects are called Q-systems \cite{MR1257245, MR1444286}.

In general, it is not clear whether every simple algebra object in a $C^*$ fusion category gives a Q-system (as multiplication may not be a multiple of a coisometry).  However, for the Haagerup fusion categories that we consider every simple algebra object does indeed give a Q-system; thus we often elide the distinction in the statements of the major theorems.   Nonetheless we do need to explicitly check that certain algebra objects give Q-systems.  For these results the following lemma will be useful.

\begin{lemma} \label{lem:QTransfer}
Suppose that $\mathscr{C}$ is a $C^*$ fusion category and that $A$ is a Q-system in $\mathscr{C}$.  Let $\mathscr{M}$ be the left $\mathscr{C}$-module category of right $A$ modules, and let $X$ be a simple object in $\mathscr{C}$.  Then $\underline{\mathrm{End}}(X)$ is a $Q$-system in $\mathscr{C}$.
\end{lemma}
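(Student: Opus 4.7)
My plan is to translate the statement into subfactor language using the dictionary of Section~\ref{sec:subfactorvsalgebra}, where the conclusion becomes a classical construction. I read $\underline{\mathrm{End}}(X)$ as the internal End computed in $\mathscr{M}$, either of a simple $X\in\mathscr{M}$ directly, or of the free right $A$-module $X\otimes A$ when $X$ is taken in $\mathscr{C}$; the same argument handles both readings.

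First I would realize $\mathscr{C}$ concretely. Because $A$ is a Q-system in a $C^*$-fusion category, there is a subfactor $N\subset M$ together with an embedding of $\mathscr{C}$ as a full $C^*$-tensor subcategory of $N$-$N$ bimodules under which $A\cong \bar\kappa\kappa$ for $\kappa={}_MM_N$. Under this identification, $\mathscr{M}$ is equivalent, as a unitary $\mathscr{C}$-module category, to the $\mathscr{C}$-stable subcategory of $N$-$M$ bimodules generated by ${}_NM_M$. A simple $X\in\mathscr{M}$ corresponds to an irreducible $N$-$M$ bimodule $\eta$, and the defining universal property of the internal End together with Frobenius reciprocity for bimodules identifies $\underline{\mathrm{End}}(X)$ with the relative tensor product $\eta\otimes_M\bar\eta$ as an $N$-$N$ bimodule in $\mathscr{C}$, with algebra structure coming from applying the bimodule evaluation $\bar\eta\otimes_N\eta\to \mathbf{1}_M$ in the middle of $\eta\otimes_M\bar\eta\otimes_N\eta\otimes_M\bar\eta$, and with unit given by the coevaluation $\mathbf{1}_N\to \eta\otimes_M\bar\eta$.

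Next I would invoke the classical subfactor fact (Longo, Longo--Roberts) that for any finite-index $N$-$M$ bimodule $\eta$, the composition $\eta\otimes_M\bar\eta$ equipped with this algebra structure and the standard normalization of the duality pair is automatically a Q-system in the $C^*$-category of $N$-$N$ bimodules: the multiplication is a rescaled coisometry satisfying the Frobenius axioms. Applied to our $\eta$, this directly gives that $\underline{\mathrm{End}}(X)$ is a Q-system.

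The main obstacle I expect is the bookkeeping of unitary structures: one must verify that the $C^*$-structure on $\mathscr{M}$ induced by $A$'s Q-system structure matches that of $N$-$M$ bimodules, and that the categorically defined internal End with its induced $*$-structure coincides with the bimodule composition under its standard normalization, so that the resulting multiplication is literally a rescaled coisometry rather than merely an associative product. These are standard verifications in the Longo/M\"uger/Yamagami framework, but they require care with the normalization constants and with the choice of standard solutions to the conjugate equations.
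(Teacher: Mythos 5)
Your proposal is correct and takes essentially the same route as the paper: both identify $\underline{\mathrm{End}}(X)$ with $\overline{X}X$ for $X$ viewed as a $1$-morphism in a $2$-$C^*$-category attached to the Q-system $A$, and then invoke the Longo--Roberts result that such a composition of a $1$-morphism with its conjugate is automatically a Q-system. The only cosmetic difference is that you pass through a concrete realization by a subfactor $N\subset M$, whereas the paper works directly with the abstract $2$-$C^*$-category of bimodule objects over $1$ and $A$ inside $\mathscr{C}$.
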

\begin{proof}
Since $A$ is a $Q$-system, there's a $2$-$C^*$-category (in the sense of \cite[\S 7]{MR1444286}) whose objects are $1$ and $A$, whose $1$-morphisms are the bimodule objects over those algebras, and whose $2$-morphisms are maps of bimodules.  In this context $\underline{\mathrm{End}}(X)$ becomes $\overline{X} X$ where we think of $X$ as a $1$-morphism between $1$ and $A$.  Hence, by \cite[\S 7]{MR1444286}, $\underline{\mathrm{End}}(X)$ is a $Q$-system.
\end{proof}

The right way to think of the above result is that it says that there's a good notion of $C^*$ module categories over $C^*$ fusion category $\mathscr{C}$, and that the $Q$-system condition just says that the corresponding module category is a $C^*$ module category.  Thus we need only check the $Q$-system condition once per module category.

\begin{remark}
Note that in the literature the generalization of $Q$-systems to the nonalgebraic context is typically that of a Frobenius Algebra \cite{MR1966524, MR2075605}.  However, in context of simple algebras, the Frobenius trace just comes from the (unique up to rescaling) splitting of the unit morphism.
\end{remark}

We recall the definition of the principal graphs of a subfactor. For any two objects $\rho$ and $\sigma$ in a fusion category or $C^*$ tensor category $\mathscr{C}$, let $(\rho, \sigma )= dim(Hom( \rho, \sigma ))$. 

Let $N \subset M$ be a finite index subfactor, with principal and dual even parts $\mathscr{N}$ and $\mathscr{M}$. Let $\kappa= {}_M M_N$, and let $\mathscr{K} $ be the category of $M-N$ bimodules generated by $\kappa \mathscr{N}$. The principal graph of the subfactor is the bipartite graph with even vertices indexed by the simple objects of $\mathscr{N}$, odd vertices indexed by the simple objects of $\mathscr{K}$, and for any pair of simple objects $\xi \in \mathscr{N}, \eta \in \mathscr{K}$, $(\kappa \xi,\eta ) $ edges connecting the corresponding vertices. It can be made into a pointed graph by distinguishing the even vertex corresponding to the identity object in $\mathscr{N}$, which is denoted by $*$. The dual graph is defined the same way but with $\mathscr{M}$ replacing $\mathscr{N}$ and $\bar{\mathscr{K}}= \mathscr{M} \kappa $. 

If the subfactor has finite depth, then the Frobenius-Perron dimensions of the objects are given by the Frobenius-Perron weights of the graphs, normalized to be $1$ at $*$, and the index of the subfactor is the squared norm of the graph.

Moreover, if $\kappa$ is any object in a semisimple module category over a fusion category, we can define the principal graph the same way.

\subsection{The Haagerup subfactor}

The Haagerup subfactor \cite{MR1686551} is a finite-depth subfactor with index $\frac{5+\sqrt{13}}{2} $; this is the smallest index above $4$ for any finite depth subfactor \cite{MR1317352}. The Haagerup subfactor is unique, up to duality. It has the following principal and dual graphs:

$$\hpic{hgraphs} {2in} $$

We will call the fusion category with four simple objects $\mathscr{H}_1$ and the one with six simple objects $\mathscr{H}_2 $. The Frobenius-Perron dimensions of the simple objects are $d(\alpha)=1$, $d = d(\xi)=d(\eta)=d(\mu)+1=d(\nu)-1=\frac{3+\sqrt{13}}{2}$, $d(\kappa)=\sqrt{d+1} $, $d(\lambda)=\sqrt{(d+1)(d+2)} $. 

The fusion ring for $\mathscr{H}_2$ will be called $H_6$; it satisfies the relations $[\alpha^3]=[1], [\alpha \xi] = [\xi \alpha^2]$, and $[\xi^2]=[ 1 ]\oplus[\xi ] + [\alpha \xi ] + [ \alpha^2 \xi]$. The fusion ring for $\mathscr{H}_1 $ will be called $H_4$; it is commutative and the fusion rules are determined by the ring homomorphism property of dimension, self-duality of all simple objects, and Frobenius reciprocity.

\begin{table}
\begin{tabular}{ c || c | c | c | c | c | c }

                 & $1$             & $\alpha$       & $\alpha^2 $ & $\xi$          & $\alpha \xi$     & $\alpha^2 \xi $  \\ \hline \hline
$1$              & $1$             & $\alpha$       & $\alpha^2 $ & $\xi$          & $\alpha \xi$     & $\alpha^2 \xi $ \\ \hline
$\alpha$         & $\alpha$        & $\alpha^2$     & $1 $        & $\alpha \xi$   & $ \alpha^2 \xi$  & $\xi $ \\ \hline
$\alpha^2 $      & $\alpha^2$      & $1$            & $\alpha $   & $\alpha^2 \xi$ & $\xi$            & $\alpha \xi $ \\ \hline
$\xi$            & $\xi$           & $\alpha^2 \xi$ & $\alpha \xi$& $1+Z$ & $\alpha^2+Z$ & $\alpha+Z$ \\ \hline
$\alpha \xi $    & $\alpha \xi$    & $\xi$           & $\alpha^2 \xi $ & $\alpha+Z$ & $1+Z$ & $\alpha^2+Z$ \\ \hline
$\alpha^2 \xi  $ & $\alpha^2 \xi$  & $\alpha \xi$   & $\xi $ & $\alpha^2+Z$ & $\alpha+Z$ & $1+Z$ \\
\end{tabular}
\caption{ $H_6$ multiplication table.  We use the abbreviation $Z = \xi+\alpha\xi+\alpha^2\xi$}
\end{table}
\begin{table} 
\begin{tabular}{ c || c | c | c | c}

            & $1$   & $\nu$                  & $\eta $ & $\mu$    \\ \hline \hline
 $1$  	    & $1$   & $\nu$                  & $\eta $ & $\mu$    \\ \hline 
  $\nu$     & $\nu$ & $1+2\nu+2\eta+\mu$     & $2\nu+\eta+\mu $ & $\nu+\eta+\mu$    \\ \hline 
   $\eta $  & $\eta$ & $2\nu+\eta+\mu$                 & $1+\nu+\eta+\mu$ & $\nu+\eta$    \\ \hline
  $\mu$     & $\mu$  & $\nu+ \eta+\mu$       & $\nu+\eta $ & $1+\nu$    \\ 
\end{tabular}
\caption{ $H_4$ multiplication table}
\label{h4mult}
\end{table}

The fusion category $\mathscr{H}_2$ has two non-trivial invertible objects: $\alpha$ and $\alpha^2$.  The inner automorphism given by conjugation by $\alpha$ cyclically permutes $\xi$, $\alpha \xi$, and $\alpha^2 \xi$.  

The full subcategory generated by $\alpha$ has three invertible objects and is thus monoidally equivalent to $\mathrm{Vec}(\mathbb{Z}/3\mathbb{Z}, \omega)$ for some associator $\omega \in H^3(\mathbb{Z}/3, \mathbb{C}^*)$.  In fact, as was pointed out to us by David Jordan, this cocycle must be trivial.

\begin{lemma} \label{lem:cocycle}
The full subcategory generated by $\alpha$ is equivalent as a fusion category to the category of $\mathbb{Z}/3\mathbb{Z}$-graded vector spaces.  
\end{lemma}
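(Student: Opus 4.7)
The subcategory $\langle\alpha\rangle$ is pointed with Grothendieck group $\mathbb{Z}/3\mathbb{Z}$, hence equivalent as a fusion category to $\mathrm{Vec}(\mathbb{Z}/3\mathbb{Z},\omega)$ for some class $\omega\in H^3(\mathbb{Z}/3\mathbb{Z},\mathbb{C}^\times)\cong\mathbb{Z}/3\mathbb{Z}$; the statement amounts to showing $\omega=0$. No constraint on $\omega$ is visible within the pointed subcategory itself, so the plan is to use the noninvertible simple $\xi$ together with the fusion rule $\xi\otimes\alpha\cong\alpha^2\otimes\xi$ in $\mathscr{H}_2$ (which records that $\xi$ implements inversion on $\mathbb{Z}/3\mathbb{Z}$ by conjugation) to extract a cohomological constraint.

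Fix an isomorphism $\phi\colon\xi\otimes\alpha\to\alpha^2\otimes\xi$, unique up to a nonzero scalar. Iterating $\phi$ three times, with the appropriate re-bracketings using the associator of $\mathscr{H}_2$, produces a canonical isomorphism
\[
\xi\otimes\alpha\otimes\alpha\otimes\alpha\ \longrightarrow\ \alpha^2\otimes\alpha^2\otimes\alpha^2\otimes\xi.
\]
On each side we may then collapse $\alpha^{\otimes 3}\cong\mathbf{1}$ using the monoidal structure of $\langle\alpha\rangle$ (which is exactly what encodes $\omega$) and use the unit constraint to arrive at $\xi$. The resulting two endomorphisms of $\xi$ must agree in $\mathrm{End}(\xi)=\mathbb{C}$, yielding a scalar identity that expresses the cocycle datum $\omega(\alpha,\alpha,\alpha)$ in terms of a cube built from the normalization of $\phi$.

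After carefully absorbing the $\mathbb{C}^\times$-freedom in the choice of $\phi$ into the coboundary ambiguity of $\omega$, this identity forces the cohomology class $[\omega]$ to lie in the image of multiplication by $3$ on $H^3(\mathbb{Z}/3\mathbb{Z},\mathbb{C}^\times)\cong\mathbb{Z}/3\mathbb{Z}$. Since multiplication by $3$ is the zero map on $\mathbb{Z}/3\mathbb{Z}$, we conclude $[\omega]=0$. The main technical obstacle is the associator bookkeeping in the three-fold composition, synchronizing the re-associations in $\mathscr{H}_2$ with the rescaling freedom of $\phi$; the conclusion is then forced by the cubical nature of the triple iteration of $\phi$ together with the fact that $H^3(\mathbb{Z}/3\mathbb{Z},\mathbb{C}^\times)$ has exponent $3$.
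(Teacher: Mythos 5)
Your argument has a genuine gap: the only structure it uses is the existence of a simple object $\xi$ with $\xi\otimes\alpha\cong\alpha^{2}\otimes\xi$ and $\mathrm{End}(\xi)=\mathbb{C}$, and that data does not constrain the class $\omega$. A concrete counterexample: take $\mathrm{Vec}(S_3,\tilde\omega)$ where $\tilde\omega$ restricts nontrivially to the Sylow $3$-subgroup $\langle a\rangle$ (such classes exist, since by a transfer argument the restriction $H^3(S_3,\mathbb{C}^\times)\to H^3(\mathbb{Z}/3\mathbb{Z},\mathbb{C}^\times)$ is surjective). Setting $\alpha=\delta_a$ and $\xi=\delta_t$ for a transposition $t$, one has $\xi\otimes\alpha\cong\alpha^{2}\otimes\xi$ because $ta=a^{2}t$, yet the pointed subcategory $\langle\alpha\rangle$ has nontrivial associator. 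The mechanism fails at the last step: the two isomorphisms $\xi\otimes\alpha^{\otimes3}\to\xi$ you build are only \emph{proportional}, with an undetermined constant $c\in\mathbb{C}^\times$ that rescales by $t^{\pm3}$ when $\phi$ is rescaled by $t$. Because $\mathbb{C}^\times$ is divisible, every scalar is a cube, so this freedom absorbs the entire identity and no condition on $[\omega]$ survives; the observation that multiplication by $3$ kills $H^3(\mathbb{Z}/3\mathbb{Z},\mathbb{C}^\times)$ cuts the other way --- at best your relation pins down $3[\omega]$, which is automatically zero and hence carries no information.

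The paper's proof uses an input your argument never touches: the odd part of the Haagerup subfactor contains a simple object $\lambda$ with $\alpha\otimes\lambda\cong\lambda$. Hence the additive subcategory generated by $\lambda$, which is equivalent to $\mathrm{Vec}$, is a module category with a single simple object over $\langle\alpha\rangle\cong\mathrm{Vec}(\mathbb{Z}/3\mathbb{Z},\omega)$. Such a module category is precisely a twisted group algebra $\mathbb{C}_\psi[\mathbb{Z}/3\mathbb{Z}]$ with $d\psi=\omega$, so $\omega$ must be a coboundary. (Indecomposable module categories over $\mathrm{Vec}(G,\omega)$ correspond to pairs $(H,\psi)$ with $\omega|_H=d\psi$ and have $[G:H]$ simple objects, so a rank-one module category forces $H=G$ and $\omega$ trivial.) Some use of an object on which $\alpha$ acts trivially is unavoidable here: the way $\alpha$ interacts with $\xi$ inside $\mathscr{H}_2$ alone is, as the $S_3$ example shows, consistent with any associator on the pointed part.
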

\begin{proof}
Notice that since $\alpha \lambda = \lambda$, the category of vector spaces (thought of as sums of $\lambda$) is a module category over $\mathscr{D}$.  Hence $\mathscr{D}$ has trivial associator.
\end{proof}

\section{Algebra objects, principal graphs and subfactors in the Haagerup categories}

The goal of this section is to classify all simple algebra objects in each of the $\mathscr{H}_i$, and to classify all indecomposable module categories over each of the $\mathscr{H}_i$.  The outline of the argument is as follows.  We use combinatorics to describe the possible objects which could have a simple algebra structure.  However, this list is somewhat large, and since some of the objects are relatively complex it is difficult to determine how many algebra structures each such object admits. Fortunately, in order to classify all module categories it is enough to only consider the algebra objects whose dimensions are minimal among all algebras which yield the same module category.  Furthermore we need only consider these algebra objects up to inner automorphisms of the category.

There are many fewer candidates for these minimal algebra objects and we are able to easily determine when the algebra structure exists and that it is unique when it does exist.  Thus we obtain a complete list of all indecomposable module categories, and using the internal Hom construction we are able to read off the full list of (not necessarily minimal) simple algebra objects.   We do this first for $\mathscr{H}_2$, and then use this classification to read off the same classification for the other $\mathscr{H}_i$.

In essence what we are doing is moving back-and-forth between algebra objects and module categories in order to exploit the more accessible combinatorial structure of algebra objects and the more rich algebraic structure of module categories.  This interplay allows us to avoid computations which would otherwise be extremely difficult.  To illustrate the general technique we prove the following result which was proved with considerable effort in the appendix of \cite{MR2418197}.

\begin{lemma}
There exists a unique simple algebra structure on $1 + \nu$ in $\mathscr{H}_1$.  This algebra is also a Q-system.
\end{lemma}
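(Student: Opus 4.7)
The plan is to establish existence by exhibiting $1+\nu$ as an internal endomorphism object, and to prove uniqueness by translating the question to a rigid combinatorial problem about pointed module categories.

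\emph{Existence and $Q$-system property.} Reading off Table \ref{h4mult}, $\mu \otimes \mu = \mathbf{1} + \nu$. Since every simple object of $\mathscr{H}_1$ is self-dual, this rewrites as $\bar{\mu} \otimes \mu \cong 1 + \nu$. Viewing $\mathscr{H}_1$ as the regular module category over itself, $\mu$ is a simple object and its internal endomorphism $\underline{\mathrm{End}}(\mu) = \bar{\mu} \otimes \mu = 1+\nu$ carries the canonical simple algebra structure coming from composition of morphisms. Since the trivial algebra $\mathbf{1}$ in the unitary fusion category $\mathscr{H}_1$ is trivially a $Q$-system, Lemma \ref{lem:QTransfer} applied with $A = \mathbf{1}$ and $X = \mu$ shows that $1+\nu$ is also a $Q$-system.

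\emph{Uniqueness.} By Ostrik's theorem, isomorphism classes of simple algebras in $\mathscr{H}_1$ correspond bijectively to equivalence classes of pointed simple module categories $(\mathscr{M}, M)$ via $A = \underline{\mathrm{End}}(M)$. So it suffices to show that $(\mathscr{H}_1, \mu)$ is, up to equivalence of pointed module categories, the only such pair whose associated algebra has underlying object $1+\nu$. Let $(\mathscr{M}, M)$ be any such pair. Frobenius reciprocity gives $[X \otimes M : M]_{\mathscr{M}} = [X : 1+\nu]_{\mathscr{H}_1}$ for every simple $X \in \mathscr{H}_1$, so $\nu \otimes M = M \oplus N$ for a unique object $N \in \mathscr{M}$ of Frobenius--Perron dimension $4+\sqrt{13}$, while $\eta \otimes M$ and $\mu \otimes M$ contain no copy of $M$. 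Comparing dimensions forces $\dim(M) = \tfrac{1+\sqrt{13}}{2} = \dim(\mu)$.

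Now I would pin down $\mathscr{M}$ combinatorially. Using $\mu^2 = 1+\nu$ and $\nu^2 = 1+2\nu+2\eta+\mu$, one expands $\mu \otimes (\mu \otimes M) = M \oplus N$ and $\nu \otimes N = \nu^2 M - \nu \otimes M$, and then decomposes these into simples of $\mathscr{M}$ using positivity and integrality of multiplicities together with matching of Frobenius--Perron dimensions. The constraints $\dim(M)=\dim(\mu)$ and $[\mathbf{1}:\underline{\mathrm{End}}(M)]=1$ leave only one solution: $\mathscr{M}$ has exactly four simple objects whose fusion graph with respect to $\mathscr{H}_1$ matches the action of $\mathscr{H}_1$ on itself with basepoint $\mu$, yielding the equivalence $(\mathscr{M},M) \simeq (\mathscr{H}_1,\mu)$. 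The main obstacle is the case analysis ruling out scenarios where $N$ is not simple or where $\mu \otimes M$ produces new simples outside the $\mathscr{H}_1$-orbit of $M$, but these possibilities are cut off sharply by the dimension and multiplicity constraints, so the argument should close without appealing to the general classification of $\mathscr{H}_1$-module categories carried out in Section~3.
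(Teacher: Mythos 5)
Your existence argument is exactly the paper's: $1+\nu \cong \mu\bar{\mu} = \underline{\mathrm{End}}(\mu)$ in the regular module category, with the $Q$-system property supplied by Lemma \ref{lem:QTransfer} applied to the trivial algebra. The uniqueness half follows the paper's overall strategy but has a genuine gap at its final step. Even granting the combinatorial analysis you only sketch (it does work out --- the paper carries it out as Example \ref{ex}(a) via the factorization $F^{\gamma,r}=A^r(A^r)^T$, which yields a unique admissible principal graph), knowing that the fusion graph of $(\mathscr{M},M)$ coincides with that of $(\mathscr{H}_1,\mu)$ does not by itself produce an equivalence of pointed module categories: module categories over a fixed fusion category are not in general determined by their fusion graphs, so the inference ``the graph matches, hence $(\mathscr{M},M)\simeq(\mathscr{H}_1,\mu)$'' is unjustified as written.

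The paper closes exactly this gap with one further observation: the unique admissible principal graph has an odd vertex $\theta$ of Frobenius--Perron dimension $1$, so $\underline{\mathrm{End}}(\theta)$ has dimension $1$ and is therefore isomorphic to $\mathbf{1}$ with its unique algebra structure. Ostrik's theorem then identifies the category of $(1+\nu)$-modules with the category of $\mathbf{1}$-modules, i.e.\ with $\mathscr{H}_1$ itself as a module category; the dimension count $\dim M = \sqrt{\dim(1+\nu)} = \frac{1+\sqrt{13}}{2}$ then forces the basepoint to be $\mu$, whence $1+\nu\cong\underline{\mathrm{End}}(\mu)$ as algebras. You should either insert this dimension-one-vertex step or supply some other argument that pins down the module category itself rather than only its fusion graph.
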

\begin{proof}
Since $1 + \nu \cong \mu \bar{\mu}$, it has at least one algebra structure given by contraction.  Furthermore, since the algebra $1$ is a $Q$-system and $\mu$ is an object in the category of $1$-$1$ bimodules, by Lemma \ref{lem:QTransfer} this algebra is a $Q$-system.

Now suppose that we have any algebra structure on $1 + \nu$.  A simple combinatorial calculation (see Example \ref{ex}) shows that the principal graph of the corresponding subfactor must be \hpic{h7} {.3in}

The vertex all the way on the right is an odd vertex of dimension $1$ which we call $ \theta$.  Note that as an odd vertex, $\theta$ is a simple object in the category of $(1 + \nu)$-modules.  A dimension count shows that $\underline{\mathrm{Hom}}(\theta,\theta) \cong 1$, and so the category of $(1+ \nu)$-modules is equivalent to the category of $1$-modules which is just  $\mathscr{H}_1$ itself with the usual module action.  Hence, the algebra structure on $1 + \nu$ can be realized as the internal endomorphisms of some object in $\mathscr{H}_1$.  A dimension count shows that this object must be $\mu$, hence we have that $1 + \nu \cong \underline{\mathrm{Hom}}(\mu,\mu) \cong \mu \bar{\mu}$ as algebra objects.
\end{proof}

We now turn to the general question of classifying all simple algebra objects in and all simple module categories over $\mathscr{H}_1$ and $\mathscr{H}_2$.

Let $\mathscr{C}$ be a fusion category, and let $ \mathscr{K}_{\mathscr{C}}$ be a module category over $\mathscr{C}$. Let $\xi_0=1,\xi_1,...\xi_n$ be an enumeration of the simple objects in $\mathscr{C}$, and let $\kappa_0, \kappa_1, ... \kappa_m$ be an enumeration of the simple objects in $\mathscr{K}$. Let $\kappa$ be an object in $\mathscr{K}$. 

\begin{definition}
 The fusion matrix of $\kappa$ is the matrix $(F^{\kappa}_{ij} )_{0 \leq i \leq n, 0 \leq j \leq m}$ given by $F^{\kappa}_{ij}=( \kappa \xi_i, \eta_j) $.
\end{definition}

Note that the fusion matrix is only defined up to a choice of ordering of the simple objects. %We will often be interested in the case $\mathscr{K}=\mathscr{C} $ with the tensor product on $\mathscr{C}$ giving the module category structure.

\begin{example}
 Let $N \subset M$ an irreducible, finite-index, finite depth subfactor, and let $\kappa= {}_M M_N$. Then the fusion matrix $A=F^{\kappa}$ is an adjacency matrix of the principal graph, with the first row corresponding to $*$. In this case we take the convention $\kappa_0=\kappa$, so that the first column of the matrix gives the edges emanating from $\kappa$.
\end{example}

We would like to figure out which objects in a fusion category can admit a simple algebra structure. By an abuse of notation, we will often omit reference to the algebra structure and refer to an object $\gamma$, or even its isomorphism class $[\gamma]$, as an algebra.

\begin{lemma}
% Let $[\gamma]= [1] \oplus \sum_{i=1}^n \limits a_i [\xi_i]  $ be a simple algebra object in a fusion category $\mathscr{\mathscr{N}}$. Let  
% $N \subset M$ be a subfactor whose $N-N$ bimodule category is isomorphic to $\mathscr{N}$, and such that $[\gamma]=[\bar{\kappa} \kappa]$, where $\kappa= {}_M M_N $. Then $F^{\gamma}=AA^T $, where $A$ is an adjacency matrix of the principal graph of $N \subset M$ with the same ordering of the simple objects of $\mathscr{N}$.
Let $\gamma= 1 + \sum_{i=1}^n \limits a_i \xi_i  $ be a simple algebra object in a fusion category $\mathscr{\mathscr{C}}$. Let $\kappa$ be a simple right $\gamma$-module such that $\gamma \cong \underline{\mathrm{Hom}}(\kappa,\kappa)$. Then $F^{\gamma}=AA^T $, where $A=F^{\kappa}$ is the adjacency matrix of the principal graph of $\kappa $ (with the same ordering of the simple objects of $\mathscr{C}$).
\end{lemma}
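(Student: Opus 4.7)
The plan is to compute $(AA^T)_{ij}$ directly from the definition and identify it with $F^\gamma_{ij}$ via the universal property of internal Hom, moving information between $\mathscr{K}$ and $\mathscr{C}$.

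First I expand the matrix product:
\[ (AA^T)_{ij} \;=\; \sum_{k=0}^{m}(\kappa\xi_i,\kappa_k)\,(\kappa\xi_j,\kappa_k). \]
Since $\kappa_0,\ldots,\kappa_m$ is a complete list of simples of $\mathscr{K}$, the standard semisimple identity $(X,Y)=\sum_k(X,\kappa_k)(Y,\kappa_k)$ (for any $X,Y\in\mathscr{K}$) collapses this sum to $(\kappa\xi_i,\kappa\xi_j)$.

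Second, I translate back to $\mathscr{C}$ using the adjunction defining internal Hom,
\[ (\kappa\xi_i,\kappa\xi_j) \;=\; (\xi_i,\underline{\mathrm{Hom}}(\kappa,\kappa\xi_j)), \]
and identify $\underline{\mathrm{Hom}}(\kappa,\kappa\xi_j)\cong\gamma\xi_j$ by a Yoneda check: for any $Z\in\mathscr{C}$,
\[ (Z,\underline{\mathrm{Hom}}(\kappa,\kappa\xi_j)) \;=\; (\kappa Z,\kappa\xi_j) \;\cong\; (\kappa Z\xi_j^{*},\kappa) \;=\; (Z\xi_j^{*},\gamma) \;\cong\; (Z,\gamma\xi_j), \]
where the two isomorphisms use duality of $\xi_j$ in $\mathscr{C}$ and the middle equality uses the hypothesis $\gamma=\underline{\mathrm{Hom}}(\kappa,\kappa)$. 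Combining the two displays gives $(AA^T)_{ij}=(\xi_i,\gamma\xi_j)$.

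Finally, I match this to the definition $F^\gamma_{ij}=(\gamma\xi_i,\xi_j)$. Since $\xi_j$ is simple, $(\gamma\xi_i,\xi_j)$ and $(\xi_j,\gamma\xi_i)$ both equal the multiplicity of $\xi_j$ in $\gamma\xi_i$, so they coincide. Swapping $i$ and $j$ in the formula from the previous paragraph yields $(\xi_j,\gamma\xi_i)=(AA^T)_{ji}$, and the symmetry of $AA^T$ completes the chain $F^\gamma_{ij}=(AA^T)_{ji}=(AA^T)_{ij}$.

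The only step with real content is the internal-Hom identification $\underline{\mathrm{Hom}}(\kappa,\kappa\xi_j)\cong\gamma\xi_j$ in the second paragraph; everything else is routine bookkeeping with multiplicities. The main subtlety to watch for is the module-category convention, since the paper writes the $\mathscr{C}$-action as $\kappa\xi_i$, so the adjunction and $\mathscr{C}$-duality formulas must be applied on the corresponding side to keep all isomorphisms natural.
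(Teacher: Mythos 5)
Your proof is correct and follows essentially the same route as the paper's: both sides are identified with $(\kappa\xi_i,\kappa\xi_j)$, the $(AA^T)_{ij}$ side by semisimplicity of $\mathscr{K}$ and the $F^\gamma_{ij}$ side by Frobenius reciprocity for the internal Hom. You merely unpack the reciprocity step (via the Yoneda identification of $\underline{\mathrm{Hom}}(\kappa,\kappa\xi_j)$ and the symmetry of $AA^T$) where the paper writes it as the one-line identity $(\bar{\kappa}\kappa\xi_i,\xi_j)=(\kappa\xi_i,\kappa\xi_j)$.
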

\begin{proof}
Fix $\kappa$ such that $\gamma \cong \bar{\kappa} \kappa $. Then we have $F^{\gamma}_{ij}=(\gamma \xi_i, \xi_j ) =( \bar{\kappa} \kappa \xi_i ,\xi_j )= (\kappa \xi_i, \kappa \xi_j)$. On the other hand, $(AA^T)_{ij}= \sum_{l}(\kappa \xi_i, \kappa_l )(\kappa \xi_j, \kappa_l )=(\kappa \xi_i, \kappa \xi_j )$. 
\end{proof}

We will call the graph given by $A$ a principal graph of the algebra object. Note that it is not uniquely determined by the object $\gamma$, although it is uniquely determined by the algebra structure. Nevertheless in many cases there is at most one possible choice for the graph given an object $\gamma$. If $\gamma = 1 + \sum_{i=1}^n \limits a_i \xi_i  $ is a simple algebra object, then as we have seen the first column of $A$ is given by the cofficients $1, (a_i)$ of $\gamma$, and the rest of the first row is $0$. It is therefore sometimes convenient to lop off the first row and column when doing computations.
\begin{definition}
 The reduced fusion matrix of $\gamma=1 + \sum_{i=1}^n \limits a_i \xi_i $,\\ $F^{\gamma,r}$,  is given by $F^{\gamma, r}_{ij}=F^{\gamma}_{ij} - a_i a_j=\sum_{k=0}^n \limits a_k (\xi_k \xi_i, \xi_j) -a_i a_j $ for $1 \leq i,j \leq n$. A reduced principal graph of $\gamma $ is a graph given by the matrix $A^r$, defined by $ A^r_{ij}=A_{ij}$ for $i,j \geq 1$, where $A$ is the adjacency matrix of a principal graph of $\gamma$. 
\end{definition}
\begin{lemma} \label{findgraphs}
We have $F^{\gamma, r}=A^r(A^r)^T $ (with the same ordering of simple objects of $\mathscr{C}$).
\end{lemma}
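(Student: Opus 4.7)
The plan is to derive this as an almost immediate bookkeeping consequence of the previous lemma (which gave $F^{\gamma} = AA^{T}$) plus the explicit description of the zeroth row and column of $A$. The only real content is identifying the ``boundary'' contribution $a_i a_j$ in the matrix product and then moving it to the other side.

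First, I would recall from the previous lemma that $F^\gamma_{ij} = (AA^T)_{ij}$ for all $0\le i,j\le n$, where $A = F^\kappa$ and $\kappa$ is a simple right $\gamma$-module with $\gamma \cong \underline{\mathrm{Hom}}(\kappa,\kappa) = \bar\kappa\kappa$. Using the convention $\kappa_0 = \kappa$ and $\xi_0 = 1$, I would then compute the entries of the first row and column of $A$. For the first column,
$$A_{i0} = (\kappa\xi_i,\kappa) = (\xi_i, \bar\kappa\kappa) = (\xi_i,\gamma),$$
which equals $1$ for $i=0$ and $a_i$ for $i\ge 1$. For the first row, $A_{0j} = (\kappa,\kappa_j) = \delta_{0j}$. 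Thus the first row is $(1,0,\dots,0)$ and the first column is $(1,a_1,\dots,a_n)^T$, and everything off the boundary is captured by the reduced matrix $A^r_{ij} = A_{ij}$ for $i,j\ge 1$.

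Next, for any $i,j\ge 1$ I would split the matrix product by separating the $l=0$ term:
$$F^\gamma_{ij} = \sum_{l=0}^{m} A_{il}A_{jl} = A_{i0}A_{j0} + \sum_{l=1}^{m} A_{il}A_{jl} = a_i a_j + \bigl(A^r(A^r)^T\bigr)_{ij}.$$
Subtracting $a_i a_j$ from both sides and using the defining identity $F^{\gamma,r}_{ij} = F^\gamma_{ij} - a_i a_j$ gives exactly $F^{\gamma,r}_{ij} = (A^r(A^r)^T)_{ij}$, as desired.

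There is no real obstacle here; the ``hard part,'' if anything, is simply making sure the indexing conventions (the choice $\kappa_0 = \kappa$ and $\xi_0 = 1$, and the same ordering of simple objects of $\mathscr{C}$ on both sides) line up so that the first column of $A$ really does read off the coefficients of $\gamma$. Once that is in place, the identity is a one-line computation.
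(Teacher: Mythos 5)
Your proof is correct and follows essentially the same route as the paper: the paper's own argument is the one-line chain $F^{\gamma,r}_{ij}=F^{\gamma}_{ij}-a_ia_j=(AA^T)_{ij}-a_ia_j=(A^r(A^r)^T)_{ij}$, relying on the previously noted fact that the first column of $A$ is $(1,a_1,\dots,a_n)^T$ and the rest of the first row is zero. You simply spell out that boundary bookkeeping (via Frobenius reciprocity for $A_{i0}$ and simplicity of $\kappa$ for $A_{0j}$) explicitly, which is a faithful expansion of the same computation.
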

\begin{proof}
 By definition, we have $F^{\gamma, r}_{ij}=F^{\gamma}_{ij} - a_i a_j=(AA^T)_{ij}-a_ia_j=(A^r(A^r)^T)_{ij}$.
\end{proof}
This allows us to quickly find possible algebra objects in a fusion category by checking which objects have reduced fusion matrices that decompose as $AA^T$ for some matrix $A$ all of whose entries are nonnegative integers. Then the reduced principal graph is given by such an $A$, and the full principal graph is obtained by adding the vertices corresponding to $1$ and $\kappa$. 

We now apply this analysis to the Haagerup fusion categories. 

\begin{example} \label{ex}

We explain the method for a few objects in $\mathscr{H}_1$. We always use the following ordering of simple objects: $1, \nu,  \eta, \mu $.

(a) Let $\gamma=1 + \nu $. Then the reduced fusion matrix can be computed using \ref{h4mult}; it is: 
$\begin{pmatrix}
 2 & 2 & 1\\
2 & 2 & 1\\
 1 & 1 & 2\\ 
\end{pmatrix}$.
It is easy to see that up to graph equivalence, the only candidate for $A$ is: 
$\begin{pmatrix}
 1 & 1 & 0\\
1 & 1 & 0\\
 1 & 0 & 1\\
\end{pmatrix}$.
Adding a column with the coefficients of the simple objects in $\gamma$,  $1,1,0,0$, along with a row of zeros on top gives
$\begin{pmatrix}
1 & 0 & 0 & 0\\
 1 & 1 & 1 & 0\\
0 & 1 & 1 & 0\\
0 & 1 & 0 & 1\\ 
\end{pmatrix}$. Therefore the only graph compatible with an algebra structure on $\gamma$ is \hpic{h7} {0.4in} .
 
(b) $\gamma=1 + 2\nu$. Then the reduced fusion matrix is 
$\begin{pmatrix}
 1 & 4 & 2\\
4 & 3 & 2\\
 2 & 2 & 3\\ 
\end{pmatrix}$.
Since this matrix is not positive semi-definite, it does not decompose as $AA^T$ and $\gamma $ does not admit an algebra structure.

(c) Let $\gamma=1 + 4\nu + 3\eta + 2\mu$. Then the reduced fusion matrix is 
$\begin{pmatrix}
 1 & 1 & 1\\
1 & 1 & 1\\
 1 & 1 & 1\\ 
\end{pmatrix}$,
which decomposes as $AA^T$ only when $A$ is the matrix:
 $\begin{pmatrix}
 1\\
1\\
 1\\ 
\end{pmatrix}$;
the principal graph is then given by
$\begin{pmatrix}
1 & 0 \\
 4 & 1 \\
3 & 1 \\
2 & 1 \\ 
\end{pmatrix}$.
The Frobenius-Perron weight corresponding to the second column is $\sqrt{3}$. Suppose such an algebra object exists, and let $\kappa'$  be the simple object with dimension $\sqrt{3} $. Then $[\kappa' \bar{\kappa'}]=[1]+[ \sigma] $, where $[\sigma] $ is a nonnegative integral linear combination of $[\nu], [\eta], [\mu] $. Since $d(\sigma )=3-1=2$, this is impossible. Therefore $\gamma $ does not admit an algebra structure.
\end{example}

In order to turn the classification of simple algebra objects in a fusion category into a finite problem we need a bound on the size of possible simple algebra objects.

\begin{lemma}\label{lem:algebra-bound}
If $\gamma$ is a simple algebra object and $\xi$ is any simple object, then $(\gamma, \xi ) \leq \dim(\xi)$.
\end{lemma}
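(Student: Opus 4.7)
The plan is to realize $\gamma$ as an internal endomorphism algebra of a simple object in a module category, reinterpret $(\gamma,\xi)$ via the internal Hom adjunction as a hom-space in that module category, and then bound it by a Frobenius--Perron dimension count.

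Concretely, I would first take $\mathscr{M}$ to be the simple left $\mathscr{C}$-module category of right $\gamma$-modules, and invoke Ostrik's theorem (as already used in the lemma immediately preceding Example~\ref{ex}) to produce a simple $\kappa\in\mathscr{M}$ with $\gamma \cong \underline{\mathrm{Hom}}(\kappa,\kappa)$. The defining adjunction for internal Hom, together with the identity $(\gamma,\xi)=(\xi,\gamma)$ valid in any semisimple $\mathbb{C}$-linear category, then gives
$$(\gamma,\xi) \;=\; \dim\mathrm{Hom}_{\mathscr{C}}\bigl(\xi,\, \underline{\mathrm{Hom}}(\kappa,\kappa)\bigr) \;=\; \dim\mathrm{Hom}_{\mathscr{M}}(\xi\otimes\kappa,\,\kappa).$$
Because $\kappa$ is simple in the semisimple module category $\mathscr{M}$, this last dimension is exactly the multiplicity $n$ of $\kappa$ as a summand of $\xi\otimes\kappa$ in $\mathscr{M}$.

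To finish, I would compare Frobenius--Perron dimensions. Writing $\xi\otimes\kappa \cong n\kappa \oplus R$ in $\mathscr{M}$,
$$\dim(\xi)\dim(\kappa) \;=\; \dim(\xi\otimes\kappa) \;=\; n\dim(\kappa) + \dim(R) \;\geq\; n\dim(\kappa),$$
and dividing by $\dim(\kappa)>0$ yields $(\gamma,\xi) = n \leq \dim(\xi)$, as desired.

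The only substantive input is the identification $\gamma\cong\underline{\mathrm{Hom}}(\kappa,\kappa)$ for a simple $\kappa$. This is baked into the paper's working notion of simple algebra: writing $\gamma = 1 + \sum a_i \xi_i$ implicitly assumes $1$ appears with multiplicity one, which rules out counterexamples like matrix algebras in $\mathrm{Vec}$, where $(\mathrm{Mat}_n,1)=n^2$ vastly exceeds $\dim(1)=1$. Granted the identification, the inequality is just the elementary observation that a simple object in a semisimple module category appears in a tensor product with multiplicity at most the Frobenius--Perron dimension of the other tensor factor, which is much easier than bounding $(\gamma,\xi)$ directly in the fusion ring.
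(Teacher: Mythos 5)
Your proof is correct and is essentially the paper's own argument: the paper writes $\gamma = \eta\bar{\eta}$ with $\eta$ the simple object ``$\gamma$ as a $\gamma$-module,'' applies Frobenius reciprocity to get $(\gamma,\xi)=(\eta,\xi\eta)$, and bounds that multiplicity by $\dim(\xi\eta)/\dim(\eta)=\dim(\xi)$ using positivity of Frobenius--Perron dimensions, which is exactly your internal-Hom adjunction plus dimension count. Your closing remark correctly isolates the one implicit hypothesis (that $\gamma$ contains $\mathbf{1}$ with multiplicity one, so that it is the internal End of a \emph{simple} module), which the paper also uses silently.
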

\begin{proof}
This result is well-known in the subfactor context, but we quickly prove it in order to see that it works in the fusion category context.  Recall that $\gamma = \eta \bar{\eta}$ for some simple object $\eta$ in a module category (namely $\eta$ is just $\gamma$ as a $\gamma$-module).  By Frobenius reciprocity $(\gamma, \xi) = (\eta, \xi \eta)$.  Since $\eta$ is simple, $(\eta, \xi \eta)$ just measures the number of copies in $\xi \eta$.  Since Frobenius-Perron dimensions are always positive, $(\eta, \xi \eta)$ is at most $\dim \xi \eta/\dim \eta = \dim \xi$.
\end{proof}

\begin{lemma}
Let $\gamma $ be a nontrivial simple algebra object in a fusion category with fusion ring isomorphic to $H_4$. Then any principal graph of $\gamma$ is one of the following seven graphs, with the indicated indices and indicated even objects:\\ 

(1) \hpic{h1} {0.6in}  $\displaystyle \frac{5+\sqrt{13}} {2}$, (2) \hpic{h2} {0.6in} $12+3\sqrt{13}$ \\

(3) \hpic{h3} {0.6in}  $4+\sqrt{13}$, (4) \hpic{h4} {1.1in}  $\displaystyle \frac{11+3\sqrt{13}} {2}$ \\

(5) \hpic{h5} {0.6in} $\displaystyle \frac{15+3\sqrt{13}} {2}$, (6) \hpic{h6} {0.6in} $\displaystyle \frac{19+5\sqrt{13}} {2}$ \\

(7) \hpic{h7} {0.4in} $\displaystyle \frac{7+\sqrt{13}} {2}$ \\

Furthermore, of the above algebra objects the only minimal ones are (1) and (3).
\end{lemma}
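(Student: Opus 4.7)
The plan is a finite case analysis driven by Lemma \ref{lem:algebra-bound}. Writing a candidate nontrivial simple algebra as $\gamma = 1 + a_1 \nu + a_2 \eta + a_3 \mu$, the bound $a_i \leq d(\xi_i)$ combined with the dimensions $d(\nu) = \frac{5+\sqrt{13}}{2}$, $d(\eta) = \frac{3+\sqrt{13}}{2}$, $d(\mu) = \frac{1+\sqrt{13}}{2}$ yields $a_1 \in \{0,1,2,3,4\}$, $a_2 \in \{0,1,2,3\}$, $a_3 \in \{0,1,2\}$, i.e.\ $59$ nontrivial candidates.

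For each candidate I would compute the reduced fusion matrix $F^{\gamma,r}$ from Table \ref{h4mult} via $F^{\gamma,r}_{ij} = \sum_k a_k (\xi_k \xi_i, \xi_j) - a_i a_j$. By Lemma \ref{findgraphs}, $\gamma$ can support a simple algebra structure only if $F^{\gamma,r}$ factors as $A^r (A^r)^T$ for a nonnegative integer matrix $A^r$. Positive semidefiniteness of $F^{\gamma,r}$ is an immediate necessary test which, as illustrated by Example \ref{ex}(b), eliminates most of the $59$ candidates; for the survivors, the entries of $A^r$ are bounded by $\sqrt{F^{\gamma,r}_{ii}}$ so the finitely many possible $A^r$ can be enumerated by hand. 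The full adjacency matrix $A$ is then reconstructed by adjoining the column $(1,a_1,a_2,a_3)^T$ and the corresponding trivial-object row.

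A second filter, exactly as in Example \ref{ex}(c), comes from dimension-consistency at each newly introduced odd vertex $\kappa'$: since $[\kappa' \bar{\kappa'}]$ decomposes in the fusion ring, one needs $d(\kappa')^2 - 1 = \sum n_i d(\xi_i)$ for nonnegative integers $n_i$, and the multiplicities must be consistent with the adjacency matrix via $(\kappa \xi_i, \kappa') = A_{ij}$. Graphs failing these constraints are discarded. I expect these two filters to leave exactly the seven graphs (1)--(7); the associated indices are then the squared Frobenius--Perron norms of the graphs.

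For the minimality claim, the key observation is that $\gamma \cong \bar{\kappa}\kappa$ where $\kappa$ is the distinguished odd vertex $*$, so $\gamma$ is minimal precisely when $d(\kappa)$ is minimal among the Frobenius--Perron dimensions of the odd vertices of its principal graph. Reading the FP weights directly off the seven graphs should single out (1) and (3) as the only cases where the leftmost odd vertex achieves the minimum; in the other five graphs a strictly smaller odd vertex appears, so those algebras arise as $\underline{\mathrm{Hom}}(X,X)$ for some $X$ in the module category of a smaller algebra. The main obstacle is just the combinatorial bookkeeping in the $59$-candidate enumeration and the case-by-case factorization of the few $F^{\gamma,r}$ that survive the semidefinite test; in practice the positive-semidefinite filter should cut the list down to a small handful requiring detailed analysis.
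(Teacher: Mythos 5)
Your proposal is correct and follows essentially the same route as the paper: the paper's own treatment of this lemma is exactly the ``tedious calculation'' combining the bound $(\gamma,\xi)\leq\dim(\xi)$ of Lemma \ref{lem:algebra-bound} with the factorization $F^{\gamma,r}=A^r(A^r)^T$ of Lemma \ref{findgraphs}, together with the dimension-count elimination of spurious odd vertices illustrated in Example \ref{ex}(c), and the minimality claim is likewise read off by comparing the Frobenius--Perron weights of the odd vertices with that of the distinguished vertex $\kappa$. The only difference is that you defer the actual $59$-case enumeration, but the paper does the same, presenting only the method and the resulting list.
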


\begin{lemma} \label{lem:H6}
Let $\gamma$ be a nontrivial simple algebra object in a fusion category with fusion ring isomorphic to $H_6$. Then up to inner automorphism of the category any principal graph of $\gamma$ is one of the following seven graphs, with the indicated indices and even objects:\\

(1') \hpic{h8} {0.6in}  $\displaystyle \frac{5+\sqrt{13}} {2}$, (2') \hpic{h9} {0.6in}  $12+3\sqrt{13}$ \\

(3') \hpic{h10} {1in}  $4+\sqrt{13}$, (4') \hpic{h11} {1.1in}  $\displaystyle \frac{11+3\sqrt{13}} {2}$ \\

(5') \hpic{h12} {0.5in}  $\displaystyle \frac{15+3\sqrt{13}} {2}$, (6') \hpic{h13} {0.6in}  $\displaystyle \frac{33+9\sqrt{13}} {2}$ \\

(7') \hpic{h14} {0.4in}  $3$  \\

Furthermore, of the above algebra objects the only minimal ones are (1'), (3'), and (7').
\end{lemma}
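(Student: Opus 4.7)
The plan is to adapt the approach of Example \ref{ex} and the preceding lemma on $H_4$ to the case of $H_6$. Write a candidate nontrivial simple algebra as
$$\gamma = 1 + a(\alpha + \alpha^2) + b_0\xi + b_1\alpha\xi + b_2\alpha^2\xi,$$
where I have already imposed self-duality using $\overline{\alpha} = \alpha^2$ and the fact that $\xi$, $\alpha\xi$, $\alpha^2\xi$ are each self-dual (each of the three squared entries in the $H_6$ table contains $[1]$ with multiplicity one). By Lemma \ref{lem:algebra-bound}, $a \in \{0,1\}$ and $b_0, b_1, b_2 \in \{0,1,2,3\}$ since $\dim \xi = (3+\sqrt{13})/2 < 4$. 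Conjugation by the invertible object $\alpha$ is an inner automorphism that cyclically permutes $\xi \mapsto \alpha^2\xi \mapsto \alpha\xi$ (from the relation $\alpha\xi = \xi\alpha^2$), so by Lemma \ref{lem:inneraut} I may normalize the triple $(b_0, b_1, b_2)$ up to cyclic rotation. This leaves a finite and manageable list of candidates.

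For each candidate I would compute the reduced fusion matrix $F^{\gamma,r}$ from the $H_6$ multiplication table and ask, via Lemma \ref{findgraphs}, whether it factors as $A^r (A^r)^T$ with nonnegative integer entries. Most candidates should fail here, either because $F^{\gamma,r}$ is not positive semi-definite (as in Example \ref{ex}(b)), or because the only admissible factorization produces a principal graph with an odd vertex $\kappa'$ of dimension $\sqrt{n}$ that forces $[\kappa'\overline{\kappa'}] = [1] + [\sigma]$ with $\dim \sigma = n - 1$ incompatible with any combination of simples of $\mathscr{H}_2$ (the trick of Example \ref{ex}(c)). The surviving candidates should be exactly the seven graphs (1')--(7'), with indices given by the squared Frobenius-Perron norms.

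For the minimality statement, recall that $\gamma$ is minimal exactly when $\dim \gamma$ equals the smallest value of $\dim \underline{\mathrm{Hom}}(X, X) = \dim(X)^2$ among simple $X$ in the module category of right $\gamma$-modules, i.e.\ when $\gamma$ itself occupies a smallest-dimension odd vertex of its own principal graph. Comparing odd-vertex dimensions across the seven graphs shows that precisely (1'), (3'), (7') have this property; each of (2'), (4'), (5'), (6') contains an odd vertex whose internal endomorphism algebra (appearing elsewhere on the list via Ostrik's theorem) gives the same module category from a strictly smaller algebra.

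The main obstacle is the bulk of case-checking in the second paragraph: the candidate list is considerably larger than for $H_4$ because of the three simple objects of dimension $(3+\sqrt{13})/2$ and the extra invertibles, and some candidates admit several a priori compatible factorizations of $F^{\gamma,r}$, each of which must be ruled out by a Frobenius-Perron calculation on the hypothetical odd vertices. The self-duality constraint together with the inner-automorphism reduction is what keeps the enumeration finite and tractable.
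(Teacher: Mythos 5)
Your proposal is correct and follows essentially the same route as the paper, whose proof of this lemma is simply the "tedious calculation" of applying Lemma \ref{lem:algebra-bound} and Lemma \ref{findgraphs} to the finitely many self-dual candidates, with the stray admissible factorizations eliminated by exactly the two devices you name (failure of an odd vertex to satisfy the dimension constraint on $[\kappa'\overline{\kappa'}]$, as in Example \ref{ex}(c), and the Jones index restriction, which your $[\kappa'\overline{\kappa'}]=[1]+[\sigma]$ argument subsumes here). Your use of conjugation by $\alpha$ via Lemma \ref{lem:inneraut} to normalize $(b_0,b_1,b_2)$ up to cyclic rotation, and your reading of minimality off the odd-vertex dimensions, likewise match the paper's intent.
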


\begin{proof}
Using Lemma \ref{lem:algebra-bound} and Lemma \ref{findgraphs} this is a tedious calculation. The only other admissible graphs encountered are two graphs each for $[1]+[\mu]+[\nu] $ and $[1] + 2[\mu] + [\nu] + [\eta] $, which are eliminated due to having odd vertices violating the Jones index restriction; and the graph mentioned in Example \ref{ex}, (c), which was eliminated there.
\end{proof}

\begin{corollary}
If $\mathscr{M}$ is a left module category over $\mathscr{H}_2$ then $\mathscr{M}$ is the category of right $A$ modules for some algebra structure on one of the objects $1$, $1+ \xi$, $1+\alpha+\alpha^2$, or $1+\xi + \alpha \xi$.
\end{corollary}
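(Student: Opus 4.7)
The plan is to combine Ostrik's theorem with the minimal-algebra classification already done in Lemma~\ref{lem:H6}. By the corollary following Ostrik's theorem, every simple left $\mathscr{H}_2$-module category $\mathscr{M}$ arises as the category of right $A$-modules for some minimal simple algebra object $A = \underline{\mathrm{Hom}}(X,X)$, where $X \in \mathscr{M}$ is a simple object of minimal dimension. By Lemma~\ref{lem:inneraut}, conjugating $A$ by any of the invertible objects $1, \alpha, \alpha^2$ yields an isomorphic module category, so it suffices to list the minimal $A$'s up to inner automorphism.

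First I would dispatch the trivial case: when $\mathscr{M} \cong \mathscr{H}_2$ as a module category over itself, the object $1 \in \mathscr{M}$ has minimal dimension and $\underline{\mathrm{Hom}}(1,1) = 1$, giving the algebra $A = 1$. In every other case, Lemma~\ref{lem:H6} tells us that any minimal non-trivial simple algebra has, up to inner automorphism, one of the three principal graphs $(1')$, $(3')$, or $(7')$ listed there. So I only have to match each of these three graphs to a specific isomorphism class of object in $\mathscr{H}_2$.

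The matching is done by Frobenius-Perron dimension, using $d(\alpha^i)=1$ and $d(\alpha^i\xi) = \frac{3+\sqrt{13}}{2}$. Graph $(7')$ has index $3$, and the only way to write $3$ as a nonnegative integer combination of these dimensions with coefficient $1$ on the unit is $1+\alpha+\alpha^2$. Graph $(1')$ has index $\tfrac{5+\sqrt{13}}{2} = 1 + \tfrac{3+\sqrt{13}}{2}$, forcing the underlying object to be $1+\sigma$ for some $\sigma \in \{\xi,\alpha\xi,\alpha^2\xi\}$; since conjugation by $\alpha$ cyclically permutes these three, up to inner automorphism we may take $\sigma = \xi$. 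Graph $(3')$ has index $4+\sqrt{13} = 1 + 2\cdot \tfrac{3+\sqrt{13}}{2}$, so the underlying object is $1 + \sigma_1 + \sigma_2$ with each $\sigma_i$ one of $\xi,\alpha\xi,\alpha^2\xi$; again using the inner $\alpha$-action one checks that there is a single inner-automorphism class, represented by $1 + \xi + \alpha\xi$.

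The main obstacle I expect is in this last dimension-matching step for graph $(3')$: one must verify that the three ``unordered pairs'' $\{\xi,\alpha\xi\}, \{\alpha\xi,\alpha^2\xi\}, \{\xi,\alpha^2\xi\}$ all lie in a single inner-automorphism orbit under the cyclic action of $\alpha$, and that no other object of the same total dimension (for instance, objects involving repeated summands) is compatible with the integrality constraints read off from the reduced fusion matrix $AA^T$ computed via the $H_6$ multiplication table. Once these routine verifications are in hand, the four-element list $\{1,\; 1+\xi,\; 1+\alpha+\alpha^2,\; 1+\xi+\alpha\xi\}$ exhausts all possibilities and the corollary follows.
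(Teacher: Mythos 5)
Your proposal is correct and follows essentially the same route as the paper: the paper's proof is the one-line observation that, by the preceding lemma (Lemma~\ref{lem:H6}), the only minimal simple algebra objects up to inner automorphism are $1$ and those with graphs $(1')$, $(3')$, $(7')$, whose underlying objects are exactly $1+\xi$, $1+\xi+\alpha\xi$, and $1+\alpha+\alpha^2$. Your extra dimension-matching and orbit checks are fine but are already packaged into the statement of Lemma~\ref{lem:H6}, which records the even objects along with each graph.
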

\begin{proof}
From the above lemma any minimal algebra object is of one of these forms. 
%Look at the above list and note that for each case there's on odd vertex $X$ whose internal endomorphisms is of one of the above forms up to inner automorphism of $\mathscr{H}_2$.
\end{proof}

\begin{lemma}
There exists a unique algebra object structure on the object $1$ in $\mathscr{H}_2$.  This algebra object is a $Q$-system.
\end{lemma}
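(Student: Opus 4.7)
The plan is to observe that this is essentially a triviality about unit objects and to spell out just enough to make the statement rigorous. Since $\mathbf{1}$ is a simple object in the fusion category $\mathscr{H}_2$, the space $\mathrm{End}(\mathbf{1})$ is one-dimensional. Any algebra structure on $\mathbf{1}$ consists of a unit $\eta : \mathbf{1} \to \mathbf{1}$ and a multiplication $m : \mathbf{1} \otimes \mathbf{1} \to \mathbf{1}$, and post-composing $m$ with the inverse of the canonical unit isomorphism $\lambda : \mathbf{1} \otimes \mathbf{1} \to \mathbf{1}$ identifies both $\eta$ and $m$ with scalars. The left-unit axiom then forces $\eta$ and $m$ to satisfy a single reciprocal relation, so after possibly rescaling the isomorphism $\mathbf{1} \to \mathbf{1}$ (which is the unique rescaling available on a simple object), we see that the algebra structure is unique up to isomorphism.

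For existence, the identity maps $\mathrm{id}_{\mathbf{1}} : \mathbf{1} \to \mathbf{1}$ and (the inverse of) the unit isomorphism $\lambda^{-1} : \mathbf{1} \to \mathbf{1} \otimes \mathbf{1}$, dualized appropriately, manifestly satisfy the associativity and unit axioms. So at least one algebra structure exists, namely the trivial one.

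For the $Q$-system property, the cleanest argument is to observe that the multiplication morphism is (a scalar multiple of) the canonical unit isomorphism, which is an isomorphism in a $C^*$ fusion category and hence in particular a scalar multiple of a coisometry. Equivalently, the algebra $\mathbf{1}$ corresponds to the trivial subfactor $N \subset N$, which is visibly a $Q$-system. Either of these observations discharges the $Q$-system condition.

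The only possible obstacle is making sure that the "canonical" choices in the unit/associator diagrams are spelled out correctly, but since everything reduces to the one-dimensional space $\mathrm{End}(\mathbf{1}) = \mathbb{C}$, no real combinatorial work is required; this lemma serves only as the base case for the subsequent classification in $\mathscr{H}_2$.
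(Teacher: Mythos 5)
Your proposal is correct and matches the paper, whose entire proof is the single sentence ``The proof is immediate''; you have simply written out the immediate details (one-dimensionality of $\mathrm{End}(\mathbf{1})$ forcing uniqueness up to rescaling, the unitor providing existence, and the multiplication being an isomorphism, hence a scalar multiple of a coisometry, giving the $Q$-system condition). No further comment is needed.
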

\begin{proof}
The proof is immediate.
\end{proof}

\begin{lemma}
There exists a unique algebra object structure on the object $1+\xi$ in $\mathscr{H}_2$.  This algebra object is a $Q$-system.
\end{lemma}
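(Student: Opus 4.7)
The plan is to establish existence via the Haagerup subfactor itself and uniqueness by combining Lemma \ref{lem:H6} with Ostrik's theorem and the known uniqueness of the Haagerup subfactor.

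For existence, I would invoke the Haagerup subfactor $N\subset M$ recalled in the background. Its dual even part is $\mathscr{H}_2$, and the canonical Q-system on the $M$-$M$ side is $\kappa\bar{\kappa} = {}_M M\otimes_N M_M$, whose Frobenius--Perron dimension equals the Haagerup index $(5+\sqrt{13})/2 = 1+d(\xi)$. Reading off graph (1') of Lemma \ref{lem:H6} (the Haagerup principal graph viewed from the $\mathscr{H}_2$ side) identifies $\kappa\bar{\kappa}\cong 1+\xi$ as an object. This already supplies the required Q-system, and in particular an algebra, structure on $1+\xi$.

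For uniqueness, I would start with an arbitrary simple algebra object $A$ with underlying object $1+\xi$, let $\mathscr{M}_A$ be its module category of right $A$-modules, and let $X=A$ be the generator (so $\underline{\mathrm{End}}(X)\cong A$). By Lemma \ref{lem:H6} the principal graph of $A$ must be graph (1'), i.e.\ the Haagerup graph, and $X$ corresponds to the distinguished odd vertex adjacent to $*$. The uniqueness of the Haagerup subfactor (up to duality) identifies $\mathscr{M}_A$ uniquely up to equivalence as a $\mathscr{H}_2$-module category. Inspection of the graph also shows that $X$ is the only simple object of the requisite Frobenius--Perron dimension $\sqrt{(5+\sqrt{13})/2}$, so the pair $(\mathscr{M}_A,X)$ is determined up to equivalence. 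Ostrik's theorem then pins down $A$ up to algebra isomorphism.

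Finally, the Q-system property is inherited by transport of structure: we have produced one algebra on $1+\xi$ (coming from the Haagerup subfactor) that is a Q-system, and by uniqueness every algebra structure on $1+\xi$ is isomorphic to this one. The main subtlety to watch is the passage from the combinatorial data supplied by Lemma \ref{lem:H6} to the categorical statement that the module category $\mathscr{M}_A$ is uniquely determined. This step relies on the uniqueness of the Haagerup subfactor, a deep but well-known fact that is explicitly recalled in the background.
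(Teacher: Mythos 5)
Your existence argument is the same as the paper's: the Haagerup subfactor supplies a Q-system structure on $\kappa\bar{\kappa}\cong 1+\xi$. The uniqueness argument, however, takes a different route from the paper and has two genuine gaps. The paper's proof is purely local and elementary: since $\xi\xi$ contains $1$ and $\xi$ each with multiplicity one ($3$-supertransitivity), the unit and the two nontrivial components of the multiplication morphism $(1+\xi)\otimes(1+\xi)\to 1+\xi$ live in one-dimensional Hom spaces spanned by Temperley--Lieb diagrams, and the uniqueness of the algebra structure inside Temperley--Lieb is a standard computation. No appeal to the uniqueness of the Haagerup subfactor, or to Lemma \ref{lem:H6}, is needed.

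The first gap in your route: to invoke ``uniqueness of the Haagerup subfactor'' for an \emph{arbitrary} algebra structure $A$ on $1+\xi$, you must first know that $A$ arises from a subfactor, i.e.\ that $A$ is a Q-system (multiplication a multiple of a coisometry). The paper explicitly warns in Section 2.4 that an arbitrary simple algebra object in a $C^*$ fusion category is not known a priori to be a Q-system, so subfactor uniqueness does not apply to $A$ without further argument. The second gap: even granting that, uniqueness of the Haagerup subfactor determines the \emph{pair} (tensor category generated, algebra object) only up to simultaneous equivalence. Since $\xi$ tensor-generates $\mathscr{H}_2$, this yields an autoequivalence $F$ of $\mathscr{H}_2$ with $F(A)\cong A'$ as algebras, not an isomorphism $A\cong A'$ of algebras in the fixed category $\mathscr{H}_2$; correspondingly, $\mathscr{M}_A$ is determined only up to twisting by $F$, not ``uniquely up to equivalence as an $\mathscr{H}_2$-module category.'' Closing this gap requires controlling $\mathrm{Aut}(\mathscr{H}_2)$, which the paper only establishes in Section 5 --- and by an argument that itself rests on the Temperley--Lieb uniqueness of the algebra structure on the $3$-supertransitive object --- so your argument risks circularity. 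You should replace the appeal to subfactor uniqueness by the direct multiplicity-one computation.
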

\begin{proof}
Existence of a $Q$-system follows from the existence of the Haagerup subfactor.  Uniqueness of the algebra follows from $3$-supertransitivity.  Namely, since $\xi \xi$ only contains one copy of $1$ and only one copy of $\xi$, the multiplication and unit morphisms must lie inside Temperley-Lieb.  The uniqueness of the algebra structure inside Temperley-Lieb is a straightforward well-known calculation.
\end{proof}

\begin{lemma}
There exists a unique algebra object structure on the object $1+\alpha+\alpha^2$ in $\mathscr{H}_2$.  This algebra object is a $Q$-system.
\end{lemma}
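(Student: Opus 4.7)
The plan is to work entirely inside the subcategory $\mathscr{D}$ generated by $\alpha$, and to exploit the fact that, by Lemma \ref{lem:cocycle}, $\mathscr{D}$ is equivalent as a fusion category to $\mathrm{Vec}(\mathbb{Z}/3\mathbb{Z})$ with trivial associator. Since $1+\alpha+\alpha^2$ lies entirely in $\mathscr{D}$, any algebra structure on it in $\mathscr{H}_2$ is the same thing as an algebra structure on the regular object of $\mathrm{Vec}(\mathbb{Z}/3\mathbb{Z})$.

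For existence, the group algebra of $\mathbb{Z}/3\mathbb{Z}$ provides an obvious algebra structure on $1+\alpha+\alpha^2$, with multiplication given on summands by the isomorphisms $\alpha^i\otimes\alpha^j\xrightarrow{\sim}\alpha^{i+j}$ coming from the tensor structure, and unit the inclusion of the summand $1$. I would first write this down explicitly and check that associativity follows from the fact that the associator on $\mathscr{D}$ is trivial, and that simplicity follows because $(1+\alpha+\alpha^2,1)=1$ so the algebra has a one-dimensional center (equivalently, the algebra is the endomorphism algebra of itself as an $A$-module).

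For uniqueness, I would argue as follows. Any algebra structure decomposes the multiplication $A\otimes A\to A$ into nine one-dimensional pieces $\alpha^i\otimes\alpha^j\to\alpha^{i+j}$, each of which is determined by a nonzero scalar (nonzero because the multiplication must be surjective, equivalently the unit must split). Thus the algebra data is a $2$-cochain on $\mathbb{Z}/3\mathbb{Z}$ with values in $\mathbb{C}^\times$, associativity says it is a $2$-cocycle, and gauge freedom (rescaling the summands individually) quotients by $2$-coboundaries. Since $H^2(\mathbb{Z}/3\mathbb{Z},\mathbb{C}^\times)=0$, all choices are gauge equivalent and so yield isomorphic algebras. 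I expect the main step to be being careful with the gauge-fixing argument, but there is no serious obstacle because the category is pointed with trivial associator.

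Finally, for the $Q$-system property, I would invoke Lemma \ref{lem:QTransfer}. The trivial algebra $1$ is a $Q$-system in $\mathscr{H}_2$, and the corresponding module category is $\mathscr{H}_2$ itself with its regular left action. Any simple object $X$ of $\mathscr{H}_2$ then has $\underline{\mathrm{End}}(X)=X\bar X$ a $Q$-system by Lemma \ref{lem:QTransfer}. Alternatively, and more directly adapted to our algebra: the outer action of $\mathbb{Z}/3\mathbb{Z}$ on the hyperfinite $II_1$ factor realizes $\mathrm{Vec}(\mathbb{Z}/3\mathbb{Z})$ as a unitary fusion category in which the group algebra is the $Q$-system of the crossed-product subfactor; since the embedding $\mathscr{D}\hookrightarrow\mathscr{H}_2$ is an embedding of $C^*$-fusion categories, the $*$-structure and the multiplicative coisometry condition transfer, exhibiting $1+\alpha+\alpha^2$ as a $Q$-system in $\mathscr{H}_2$. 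Combined with uniqueness, this is the unique algebra structure and it is a $Q$-system.
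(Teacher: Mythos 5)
Your proposal is correct and takes essentially the same route as the paper: restrict to the full pointed subcategory $\mathscr{D}$, use Lemma \ref{lem:cocycle} to identify it with $\mathrm{Vec}(\mathbb{Z}/3\mathbb{Z})$ with trivial associator, and then get existence and uniqueness from the group algebra and $H^2(\mathbb{Z}/3\mathbb{Z},\mathbb{C}^\times)=0$ (the paper compresses this into ``the existence and uniqueness of the $D_4$ subfactor''). The only caveat is that your first $Q$-system argument does not apply as stated, since $1+\alpha+\alpha^2$ is not of the form $X\bar X=\underline{\mathrm{End}}(X)$ for any simple $X$ in $\mathscr{H}_2$ viewed as the category of $1$-modules; your second alternative (the crossed-product/$D_4$ realization transferred along the $C^*$ embedding $\mathscr{D}\hookrightarrow\mathscr{H}_2$) is the correct justification and matches the paper's.
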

\begin{proof}
$\mathscr{H}_2$ has a fusion full subcategory $\mathscr{D}$ consisting of sums of $1$, $\alpha$, and $\alpha^2$.  By Lemma \ref{lem:cocycle} this category is equivalent to $\mathbb{Z}/3\mathbb{Z}$-graded vector spaces.    Now existence and uniqueness follow from the same fact about the category of $\mathbb{Z}/3\mathbb{Z}$-graded vector spaces (which is essentially just the existence and uniqueness of the $D_4$ subfactor).
\end{proof}

\begin{lemma}\label{noalg}
There is no algebra object structure on  $1+ \xi + \alpha \xi$.
\end{lemma}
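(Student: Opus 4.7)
My plan is proof by contradiction: suppose $A = 1 + \xi + \alpha\xi$ does admit a simple algebra structure, and derive an impossibility in three steps.

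First, I pin down the combinatorial picture. Following Example \ref{ex}(a) and Lemma \ref{findgraphs}, I compute the reduced fusion matrix $F^{A,r}$ from the $H_6$ multiplication table and show it has a unique (up to relabeling) decomposition $A^r(A^r)^T$, forcing the principal graph of $A$ to be graph $(3')$ of Lemma \ref{lem:H6}. Hence the corresponding module category $\mathscr{M}_A$ would have four simples $\kappa_0,\kappa_1,\kappa_2,\kappa_3$, and $\kappa_3$ is the unique one of Frobenius--Perron dimension $\sqrt{(15+3\sqrt{13})/2}$. Since conjugation by $\alpha$ is an inner automorphism of $\mathscr{H}_2$ cyclically permuting $\xi,\alpha\xi,\alpha^2\xi$, Lemma \ref{lem:inneraut} shows it acts on $\mathscr{M}_A$ permuting $\{\kappa_0,\kappa_1,\kappa_2\}$ freely and fixing $\kappa_3$.

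Second, I apply Ostrik's theorem to obtain a second algebra representing the same module category: $B_3 := \underline{\mathrm{End}}(\kappa_3)$ is a simple algebra Morita-equivalent to $A$. Using Frobenius reciprocity on the adjacency data I find
$$B_3 = 1+\alpha+\alpha^2+\xi+\alpha\xi+\alpha^2\xi,$$
of Frobenius--Perron dimension $(15+3\sqrt{13})/2$. Now $B_3$ contains $B := 1+\alpha+\alpha^2$ as a summand, and since $B \otimes B = 3B$ any algebra structure on $B_3$ automatically restricts to an algebra structure on $B$, which by the preceding uniqueness lemma for $1+\alpha+\alpha^2$ must be the $D_4$ algebra structure. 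So $B \subset B_3$ becomes a subalgebra extension of index $d(B_3)/d(B) = (5+\sqrt{13})/2$, exactly the Haagerup index.

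The third step -- extracting a contradiction from this Haagerup-index $B$-algebra extension -- is the main obstacle. The cleanest route is to pass to the Morita-dual bimodule category $(\mathscr{H}_2)^*_{\mathscr{M}_B}$, equivalent to $B$-bimodules in $\mathscr{H}_2$, where $B_3$ corresponds to a simple algebra of dimension equal to the Haagerup index. One describes the simples of this bimodule category directly from the $\mathbb{Z}/3\mathbb{Z}$-equivariant structure on $\mathscr{H}_2$, and then runs a combinatorial argument analogous to Lemma \ref{lem:H6} to rule out any simple algebra object of the required dimension. An alternative is to work directly with the putative component $\xi B \otimes \xi B \to B_3$ of the multiplication (whose Hom space one computes to be $36$-dimensional), and show via associativity on $(\xi B)^{\otimes 3}$ that no choice is compatible with both the left and right $B$-actions. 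Either way, the technical content lies in tracking the associators of $\mathscr{H}_2$ carefully enough to close the argument.
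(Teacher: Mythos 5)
Your first two steps are sound, but the proof is not complete: the decisive third step is only a sketch of two possible strategies, and the one you call ``cleanest'' does not actually close. Passing to $B$-bimodules for $B = 1+\alpha+\alpha^2$ lands you in the category the paper later names $\mathscr{H}_3$, which at this stage is unknown territory --- you would first have to determine its fusion ring, and (as the paper eventually shows) that ring is $H_6$. But the combinatorial screen of Lemma \ref{lem:H6} for an $H_6$-category \emph{admits} a principal graph of index $\frac{5+\sqrt{13}}{2}$, namely (1'), so no contradiction arises at the level of dimensions and fusion matrices. Ruling out a Haagerup-index algebra in $\mathscr{H}_3$ is essentially equivalent to showing $\mathscr{H}_3 \not\cong \mathscr{H}_2$, which the paper deduces \emph{from} Lemma \ref{noalg}; your route is therefore circular or at best requires genuinely new input. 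Your alternative (direct associativity computations on $(\xi B)^{\otimes 3}$) is not an argument but a proposal for a computation whose feasibility is exactly what is in doubt.

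The paper's proof uses the same transport philosophy but chooses the auxiliary algebra so that the contradiction is purely combinatorial in a \emph{known} category. It takes $\iota$ to be the module over $1+\alpha^2\xi$ (inner-conjugate to the Haagerup algebra $1+\xi$, hence with bimodule category $\mathscr{H}_1$ and fusion ring $H_4$), checks by Frobenius reciprocity that $\iota\kappa$ is irreducible because $(\bar\iota\iota, \kappa\bar\kappa) = (1+\alpha^2\xi,\, 1+\xi+\alpha\xi) = 1$, and concludes that $\iota\kappa\bar\kappa\bar\iota$ would be a simple algebra in $\mathscr{H}_1$ of dimension $\frac{5+\sqrt{13}}{2}\cdot(4+\sqrt{13}) = \frac{33+9\sqrt{13}}{2}$ --- a value not on the admissible list for $H_4$. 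If you replace your subalgebra $B$ with this choice of auxiliary algebra, your argument closes in one line; as written, the gap is the missing contradiction.
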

\begin{proof}
Suppose $1+\xi+\alpha \xi$ had an algebra object structure. Let $\kappa$ be $1+\xi+\alpha \xi$ as a left module over itself (so that $1+\xi+\alpha\xi \cong \kappa \bar{\kappa} $).  Let $\iota$ be $1+\alpha^2 \xi$ as a right module over itself (so that $1+\alpha^2 \xi \cong \bar{\iota } \iota$). Then $\iota \kappa $ is a $(1+\xi+\alpha\xi)-(1+\alpha^2 \xi)  $ bimodule. Moreover, by Frobenius reciprocity, $(\iota \kappa, \iota \kappa )=(\bar{\iota} \iota, \kappa \bar{\kappa} ) $, so $\iota \kappa $ is irreducible. Then  $\iota \kappa \bar{\kappa} \bar{\iota}$ must be a simple algebra object in the category of $(1+\alpha^2 \xi)-(1+\alpha^2 \xi)$-bimodules, which is $\mathscr{H}_1$.  The algebra $\iota \kappa \bar{\kappa} \bar{\iota}$ has dimension $\mathrm{dim}(1+\alpha^2 \xi )\cdot \mathrm{dim}(1+\xi+\alpha\xi )=\frac{33+9\sqrt{13}}{2} $. But there is no admissible principal graph in $\mathscr{H}_1 $ with that index. 

%This shows that $1+\xi+\alpha \xi$ has no algebra object structure. Conjugating by powers of $\alpha$ shows that $1+ \alpha^{k} \xi + \alpha^{k+1} \xi$ has no algebra structure either.
\end{proof}

\begin{corollary}
There are exactly three simple module categories over $\mathscr{H}_2$, namely the category of $1$-modules, the category of $(1+\xi)$-modules and the category of $(1+\alpha+\alpha^2)$-modules for each of the above algebra structures.  These module categories have the following graphs for fusion with any of the objects of dimension $\frac{3+\sqrt{13}}{2}$.

\hpic{m4} {.6in} \quad
\hpic{m5} {.6in} \quad
\hpic{m6} {.4in}
\end{corollary}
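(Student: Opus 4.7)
The proof is essentially a bookkeeping exercise that compiles the preceding lemmas, followed by an explicit graph computation. Here is the plan.

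The plan is to use Ostrik's theorem together with Lemma \ref{lem:inneraut} to reduce the classification of simple module categories to the classification of minimal simple algebras up to inner automorphism. The preceding corollary already lists the only four possible underlying objects of minimal algebras in $\mathscr{H}_2$, namely $1$, $1+\xi$, $1+\alpha+\alpha^2$, and $1+\xi+\alpha\xi$. Each of the first three supports a unique algebra structure by the three existence/uniqueness lemmas immediately above (trivial algebra; algebra coming from the Haagerup subfactor plus $3$-supertransitivity; algebra coming from the $\mathrm{Vec}(\mathbb{Z}/3\mathbb{Z})$ subcategory identified in Lemma \ref{lem:cocycle}), and each is a $Q$-system. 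Lemma \ref{noalg} rules out $1+\xi+\alpha\xi$. Hence up to inner automorphism there are exactly three minimal simple algebras, so exactly three simple module categories.

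Once the three module categories are identified, the fusion graphs with an object $\tau$ of dimension $\tfrac{3+\sqrt{13}}{2}$ are computed directly. For the regular module category (i.e.\ $1$-modules, which is $\mathscr{H}_2$ itself with the obvious action), the graph of $\tau=\xi$ acting on the six simple objects is read off from the $H_6$ multiplication table listed just before this subsection; it has the three invertibles on one side, the three $\xi$-type objects on the other, with the obvious edges coming from $\xi\cdot\alpha^k$ and many multi-edges around the three $\xi$-type vertices from $\xi\cdot\xi=1+Z$ and its $\alpha$-translates. For the other two module categories one computes the action by using the internal hom description: a simple module $X$ over the algebra $A$ acted on by $\tau$ decomposes according to $\underline{\mathrm{Hom}}(Y,\tau\otimes X)$, and these multiplicities can be read off from the fusion matrices $F^{\gamma}$ and their reduced versions appearing in the proofs of the existence lemmas. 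For $A=1+\xi$ the principal graph with respect to $\xi$ is precisely the Haagerup principal graph (the graph (1') in Lemma \ref{lem:H6}), and for $A=1+\alpha+\alpha^2$ the action of $\xi$ simply identifies the three one-dimensional simples $1,\alpha,\alpha^2$ into a single $A$-module, giving the third graph.

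The main obstacle, such as it is, lies not in this final statement but in the supporting lemmas, which handle the two genuinely nontrivial issues: ruling out $1+\xi+\alpha\xi$ (done via the bimodule/induction argument in Lemma \ref{noalg} using the fact that no admissible principal graph in $\mathscr{H}_1$ has index $\tfrac{33+9\sqrt{13}}{2}$), and establishing uniqueness of the algebra structure on $1+\xi$ (done via $3$-supertransitivity, reducing the multiplication and unit to Temperley--Lieb). Given these, the corollary is immediate once one has checked that the inner automorphism action of $\alpha$ identifies the three possible algebra structures on any of $1+\xi$, $1+\alpha\xi$, $1+\alpha^2\xi$ (and similarly for the other cyclic permutations of $\xi$-type objects), so that only the three distinct underlying objects appearing in the statement need to be tracked.
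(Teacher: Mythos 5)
Your proposal is correct and follows essentially the same route as the paper: the paper's proof is simply the observation that the minimal algebra objects have been classified up to inner automorphism (via the preceding corollary, the three existence/uniqueness lemmas, and Lemma \ref{noalg}), so the simple module categories are classified, and the fusion graphs are read off from the principal graphs. Your version just spells out the bookkeeping and graph computations in more detail than the paper does.
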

\begin{proof}
We have classified all minimal algebra objects up to inner automorphism of the fusion category; thus we have classified all simple module categories.  The fusion graphs can be read off directly from the principal graphs.
\end{proof}

From the above it is also easy to read off the complete list of simple algebra objects in $\mathscr{H}_2$, and thus all of the subfactors whose principal even part is $\mathscr{H}_2$.  Furthermore, their principal graphs can be easily identified on the list in Lemma \ref{lem:H6}. However, it is a bit more work to identify the dual even parts and the dual principal graphs.

\begin{lemma} \label{lem:dualspart1}
The category of $1$-$1$ bimodules in $\mathscr{H}_2$ is $\mathscr{H}_2$.
\end{lemma}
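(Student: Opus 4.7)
The plan is to observe that the algebra object $1$ (the monoidal unit with the unique algebra structure established in the preceding lemma) is the trivial algebra, so everything will unwind tautologically. Specifically, in any fusion category $\mathscr{C}$ the unit isomorphism $\mathbf{1} \otimes X \cong X \cong X \otimes \mathbf{1}$ endows every object $X$ with a unique $\mathbf{1}$-$\mathbf{1}$ bimodule structure, and every morphism in $\mathscr{C}$ is automatically a bimodule map. Hence the forgetful functor from $\mathbf{1}$-$\mathbf{1}$-bimodules in $\mathscr{H}_2$ to $\mathscr{H}_2$ is an equivalence of $\mathbb{C}$-linear categories.

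Next I would check that this equivalence is monoidal. The tensor product of bimodules is defined as the coequalizer $M \otimes_{\mathbf{1}} N$ of the two actions $M \otimes \mathbf{1} \otimes N \rightrightarrows M \otimes N$; since both arrows are given by the unit isomorphism, the coequalizer is simply $M \otimes N$ with the usual tensor product in $\mathscr{H}_2$. Thus the equivalence intertwines the relative tensor product with the ambient tensor product, and the unit $\mathbf{1}$ (regarded as bimodule over itself) corresponds to the unit of $\mathscr{H}_2$.

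There is no real obstacle here: the lemma is essentially a definition-unwinding, and the only thing to be careful about is that we are using the algebra structure on $\mathbf{1}$ produced in the preceding lemma (which is just the identity multiplication), so that nothing more subtle than the unit constraints of the monoidal category is invoked. The result then follows at once: the category of $1$-$1$ bimodules in $\mathscr{H}_2$ is monoidally equivalent to $\mathscr{H}_2$.
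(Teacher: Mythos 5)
Your proof is correct and matches the paper's intent exactly: the paper simply declares this lemma ``Obvious,'' and your definition-unwinding (unit constraints give the unique bimodule structure, and the relative tensor product over $\mathbf{1}$ reduces to the ambient one) is precisely the argument being elided. Nothing further is needed.
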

\begin{proof}
Obvious.
\end{proof}

\begin{lemma}\label{lem:dualspart2}
The category of $(1+\xi)$-$(1+\xi)$ bimodules in $\mathscr{H}_2$ is $\mathscr{H}_1$.
\end{lemma}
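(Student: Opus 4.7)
The plan is to reduce this to the known structural properties of the Haagerup subfactor itself. By the preceding lemma, $1+\xi$ admits a (unique) algebra structure, and moreover it is a $Q$-system. Since $\mathscr{H}_2$ is a unitary fusion category, any $Q$-system in it gives rise to a finite-index subfactor $N\subset M$ of the hyperfinite $II_1$ factor with $N$-$N$ bimodules (tensor-generated by the $Q$-system) equal to $\mathscr{H}_2$, and with index equal to the Frobenius-Perron dimension of the algebra. Here that dimension is
\[
\dim(1+\xi) \;=\; 1 + \frac{3+\sqrt{13}}{2} \;=\; \frac{5+\sqrt{13}}{2},
\]
which is the Haagerup index.

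First I would invoke the uniqueness theorem: the Haagerup subfactor is the unique finite-depth subfactor of index $\tfrac{5+\sqrt{13}}{2}$ up to duality \cite{MR1686551, MR1317352}. Since our $Q$-system lives in the fusion category with six simple objects, the even part on the $N$ side is $\mathscr{H}_2$, so our subfactor must be the Haagerup subfactor oriented in the direction whose principal even part is $\mathscr{H}_2$ (the six-object side). By the description of the Haagerup subfactor recalled earlier in this section, its dual even part (the $M$-$M$ bimodules tensor-generated by ${}_M M\otimes_N M_M$) is precisely $\mathscr{H}_1$, the fusion category with four simple objects $1,\nu,\eta,\mu$.

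Next I would translate this back into fusion-category language. Under the dictionary of Section 2.1, the category of $(1+\xi)$-$(1+\xi)$ bimodule objects in $\mathscr{H}_2$ coincides with the dual fusion category $(\mathscr{H}_2)^*_\mathscr{M}$, where $\mathscr{M}$ is the module category of right $(1+\xi)$-modules; and this in turn is exactly the dual even part of the associated subfactor. Combined with the previous paragraph this identifies the bimodule category with $\mathscr{H}_1$, as claimed.

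I do not expect a genuine obstacle here: the only point that requires care is ensuring we are using the correct orientation of the Haagerup subfactor, and this is settled automatically because $\mathscr{H}_2$ is fixed at the outset as the ambient fusion category containing the $Q$-system. No direct combinatorial construction of the bimodules is needed; the appeal is entirely through the uniqueness of the Haagerup subfactor and the dictionary between $Q$-systems and subfactors.
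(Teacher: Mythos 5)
Your argument is correct, but it routes through a heavier external input than the paper uses. The paper's proof is essentially one line: $\mathscr{H}_1$ and $\mathscr{H}_2$ are \emph{defined} as the two even parts of the Haagerup subfactor, and the immediately preceding lemma shows (via $3$-supertransitivity) that the algebra structure on $1+\xi$ is unique and is realized by the Haagerup subfactor itself; hence the category of $(1+\xi)$-$(1+\xi)$ bimodules is, by definition, the dual even part of that subfactor, namely $\mathscr{H}_1$. You instead rebuild a subfactor from the $Q$-system, compute its index to be $\frac{5+\sqrt{13}}{2}$, and invoke the classification theorem that there is a unique finite-depth subfactor at that index up to duality. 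That global uniqueness result is not needed: once the algebra structure on $1+\xi$ inside $\mathscr{H}_2$ is known to be unique and to come from the Haagerup subfactor, the identification of the dual even part follows without comparing against all subfactors of that index. One small point common to both arguments: the dual even part of a subfactor is a priori only the subcategory of $A$-$A$ bimodules tensor-generated by $A\otimes A$, whereas the lemma concerns all $(1+\xi)$-$(1+\xi)$ bimodule objects; these coincide here because $1+\xi$ tensor-generates $\mathscr{H}_2$, so both categories are Morita equivalent to $\mathscr{H}_2$ and a fusion subcategory of full global dimension is everything. Neither you nor the paper spells this out, so it is not a gap relative to the paper's standard, but it is the one place where the dictionary of Section 2.1 requires care.
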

\begin{proof}
This follows from the definitions of $\mathscr{H}_2$ and $\mathscr{H}_1$ as the even parts of the Haagerup subfactor.
\end{proof}

\begin{definition} \label{lem:dualspart3}
Let $\mathscr{H}_3$ be the category of $(1+\alpha+\alpha^2)$-$(1+\alpha+\alpha^2)$ bimodules in $\mathscr{H}_2$.
\end{definition}

Note that we do not yet know that $\mathscr{H}_3$ is distinct from $\mathscr{H}_1$ and $\mathscr{H}_2$.

\begin{definition}
 If $\mathscr{C}$ and $\mathscr{D}$ are fusion categories, a subfactor $N \subset M$ will be called a $\mathscr{C}-\mathscr{D}$ subfactor if its principal even part is  $\mathscr{C}$ and its dual even part is $\mathscr{D}$.  Wild cards will be used when one of the categories is unknown or unspecified.
\end{definition}

\begin{lemma}\label{existence}
The object $1+\mu+\nu$ in $\mathscr{H}_1$ has a $Q$-system structure.  The corresponding subfactor is an $\mathscr{H}_1$-$\mathscr{H}_3$ subfactor and its principal graphs are (3) and (3').
\end{lemma}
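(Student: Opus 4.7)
The plan is to construct the $Q$-system on $1+\mu+\nu$ via a Morita-equivalence argument and then identify it using the combinatorial classification already in hand. Since the $Q$-system $1+\xi$ gives a Morita equivalence $\mathscr{H}_1 \sim \mathscr{H}_2$ (from the Haagerup subfactor), and the $Q$-system $1+\alpha+\alpha^2$ gives a Morita equivalence $\mathscr{H}_2 \sim \mathscr{H}_3$ (Definition \ref{lem:dualspart3}), I would compose them to obtain an invertible $\mathscr{H}_1$-$\mathscr{H}_3$ bimodule category $\mathscr{N}$. Viewed as a simple left $\mathscr{H}_1$-module category, $\mathscr{N}$ is a $C^*$ module category, so by Ostrik's theorem together with Lemma \ref{lem:QTransfer} it is equivalent to the category of right modules over some minimal $Q$-system $A \in \mathscr{H}_1$.

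For the identification I would count module categories. The Morita bijection on equivalence classes of simple module categories, combined with the earlier Corollary that $\mathscr{H}_2$ has exactly three simple module categories, implies $\mathscr{H}_1$ has exactly three as well. By the classification Lemma for fusion rings isomorphic to $H_4$, the minimal nontrivial algebras in $\mathscr{H}_1$ can only have principal graph (1) or (3), and they are unique up to inner automorphism (which is trivial here since $\mathscr{H}_1$ has no nontrivial invertible objects). Together with the trivial algebra this gives at most three minimal algebras; since the trivial one and graph (1) (the Haagerup algebra $1+\eta$) both exist and are plainly distinct, the algebra with principal graph (3) must also exist and must be our $A$.

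Since graph (3) has index $4+\sqrt{13}$, we have $\dim A = 4+\sqrt{13}$. The only objects in $\mathscr{H}_1$ of this dimension containing $1$ as a summand are $1+\mu+\nu$ and $1+2\eta$, and a direct calculation of the reduced fusion matrix of $1+2\eta$ from Table \ref{h4mult} (in the style of Example \ref{ex}) produces a negative entry; by Lemma \ref{findgraphs} this rules out any algebra structure on $1+2\eta$, forcing $A \cong 1+\mu+\nu$.

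Finally, the dual even part of the corresponding subfactor is $\mathscr{H}_3$ by construction of $\mathscr{N}$, the principal graph is (3) by the identification above, and the dual principal graph is (3')---the unique graph in Lemma \ref{lem:H6} with index $4+\sqrt{13}$. The hardest part of the argument will be verifying that the composition of Morita equivalences stays inside the $C^*$ setting so that Lemma \ref{lem:QTransfer} genuinely produces a $Q$-system rather than a bare algebra; this should follow automatically from the $2$-$C^*$-category framework underlying the $Q$-system formalism, though some care is needed to keep track of unitary structures when composing module categories.
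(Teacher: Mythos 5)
Your overall strategy---compose the Morita equivalence coming from $1+\xi$ with the one coming from $1+\alpha+\alpha^2$ to produce an $\mathscr{H}_1$--$\mathscr{H}_3$ bimodule category---is the same idea the paper uses, realized there concretely as an intermediate subfactor $N\subset P\subset M$ inside the $\mathscr{H}_1$--$\mathscr{H}_2$ subfactor with graphs (2)--(2'). But your identification of $\mathscr{N}$ with the third module category is circular. You know $\mathscr{H}_1$ has exactly three simple module categories, with minimal algebras of graphs (trivial), (1), (3); to conclude that $\mathscr{N}$ is the one with graph (3) you must rule out that $\mathscr{N}$ is the trivial or the Haagerup module category. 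The only thing distinguishing $\mathscr{N}$ at this stage is that its dual is $\mathscr{H}_3$, and the paper explicitly does not yet know that $\mathscr{H}_3\ncong\mathscr{H}_1,\mathscr{H}_2$ --- that distinctness is a \emph{corollary} of the present lemma (it is deduced from the dual graph (3') via Lemma \ref{noalg}). You need an independent computation pinning down $\mathscr{N}$, e.g.\ the dimension of $\underline{\mathrm{End}}(X)$ for a simple $X\in\mathscr{N}$. The paper gets this for free: the intermediate subfactor satisfies $[M:P]=3$, so $[P:N]=(12+3\sqrt{13})/3=4+\sqrt{13}$, which forces graph (3) as the unique $H_4$-compatible graph at that index.

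The second gap is the dual graph. You identify (3') as ``the unique graph in Lemma \ref{lem:H6} with index $4+\sqrt{13}$,'' but Lemma \ref{lem:H6} applies to fusion categories with fusion ring $H_6$, and at this point nothing is known about the fusion ring of $\mathscr{H}_3$ beyond Definition \ref{lem:dualspart3}; indeed the paper establishes that $\mathscr{H}_3$ has fusion ring $H_6$ (Lemma \ref{fring}) \emph{from} the graph (3'), so invoking Lemma \ref{lem:H6} here is again circular. The paper's proof devotes its second half to an explicit reconstruction of (3'): using that $P\subset M$ has graphs (7')--(7'), Frobenius reciprocity computations such as $(\bar\iota\iota,\kappa\bar\kappa)=1$ and $([(\bar\iota\iota)^2],[\beta]+[\beta^2])=2$, and trivalence/symmetry constraints to build the graph vertex by vertex and then cap the index. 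That computation, which is the real content of the lemma, is absent from your proposal and is not replaceable by a citation to the $H_6$ classification. (Your remaining steps --- the count of three module categories over $\mathscr{H}_1$, the uniqueness of the $1+\eta$ algebra via supertransitivity, and the elimination of $1+2\eta$ by the negative diagonal entry in its reduced fusion matrix --- are all fine.)
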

\begin{proof}
 Let $N \subset M$ be an $\mathscr{H}_1-\mathscr{H}_2 $ subfactor with principal graphs (2) and (2'). From the graph (2') we see that there is an intermediate subfactor $N \subset P \subset M$ with $[M:P] = 3$.  Note that, by definition of $\mathscr{H}_3$, we have that $P \subset M$ induces a Morita equivalence between $\mathscr{H}_3$ and $\mathscr{H}_2$.  Thus, $N \subset P $ is an $\mathscr{H}_1-\mathscr{H}_3 $ subfactor which must have principal graph (3), since that is the only $H_4$ compatible graph with the right index. 

\begin{figure}
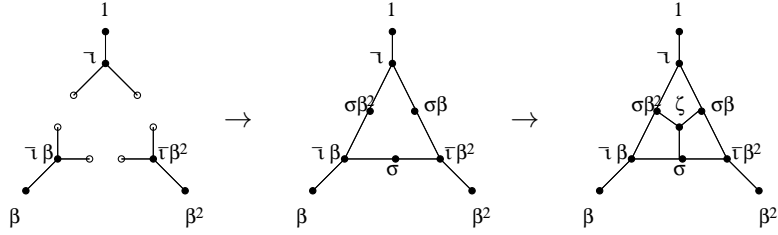
 
\begin{center}
\hpic{part_graph} {1.2in}  $\rightarrow$ \hpic{part2_graph} {1.2in} $\rightarrow$ \hpic{part3_graph} {1.2in}
\caption{Schematic representation of the graph computation} \label{buildinggraph}
\end{center}
\end{figure}

To compute the dual graph, let $\iota={}_N P_P $ and $\kappa={}_P M_M $. Note that $P \subset M $ is an $*-\mathscr{H}_2$ subfactor, so it must have dual graph (7'), which implies that is also has principal graph (7'). Let $\beta$ be a dimension one $P-P$ bimodule such that $[\kappa \bar{\kappa}]=[1] + [\beta] + [\beta^2]$. Then by Frobenius reciprocity, $(\bar{\iota} {\iota}, \kappa \bar{\kappa} )=(\iota \kappa, \iota \kappa ) =1$. In a similar way we find $([(\bar{\iota} {\iota})^2], [\beta] + [\beta^2])=2 $. This implies that there are two other vertices in the dual graph (labeled by $\beta$ and $\beta^2$ sharing an order three symmetry with $*$ (the vertex labeled by $1$), such that there are no length two paths but there are two length four paths from $*$ to these vertices. Moreover, by Frobenius reciprocity, the vertex corresponding to the fundamental object in the dual graph is trivalent, and therefore so are the (unique) vertices adjacent to the other two dimension one vertices. (See Figure \ref{buildinggraph}.) 
Taken together, this implies that the graph contains a triangle of odd vertices, with one even vertex in the middle of each side (which we label by $\sigma, \beta \sigma, \beta^2 \sigma$) and one even vertex (labeled by $1,\beta,\beta^2$) attached by an edge to each corner of the triangle (labeled by $\bar{\iota}, \beta \bar{\iota}, \beta^2 \bar{\iota}$). There is one additional odd vertex (which we label by $\zeta$), which must be connected to the vertices labeled by $\sigma, \beta \sigma, \beta^2 \sigma $. By symmetry, it must be connected to all three of them by at least one edge. Adding these three edges to the graph gives the correct index, so the graph can't be any larger. 
\end{proof}

\begin{corollary}
 $\mathscr{H}_3 $ is not equivalent to $\mathscr{H}_1 $ or $\mathscr{H}_2$.
\end{corollary}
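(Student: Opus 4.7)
The plan is to use the explicit construction from Lemma \ref{existence} and rule out each of the two potential identifications separately.

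For $\mathscr{H}_3 \not\cong \mathscr{H}_1$ I would just count simple objects. The dual principal graph (3') constructed in the proof of Lemma \ref{existence} has six even vertices: three dimension-one objects $1,\beta,\beta^2$ at the corners of a triangle and three objects of dimension $\frac{3+\sqrt{13}}{2}$, namely $\sigma,\beta\sigma,\beta^2\sigma$, on the sides. These even vertices are representatives of the isomorphism classes of simple objects in the dual even part $\mathscr{H}_3$, so $\mathscr{H}_3$ has at least six simple objects, while $\mathscr{H}_1$ has only four (its fusion ring being $H_4$).

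For $\mathscr{H}_3 \not\cong \mathscr{H}_2$, the strategy is to exhibit a simple algebra object that lives in $\mathscr{H}_3$ but cannot live in $\mathscr{H}_2$. The dual subfactor of $N \subset P$ has principal even part $\mathscr{H}_3$ and principal graph (3'), so it realizes a simple $Q$-system in $\mathscr{H}_3$ with that principal graph. If $\mathscr{H}_3 \cong \mathscr{H}_2$, this algebra transports to a simple algebra in $\mathscr{H}_2$ with the same principal graph. Reading off the first column of the adjacency matrix of (3') via Lemma \ref{findgraphs}, the underlying object must be $1+\xi+\alpha\xi$ up to inner automorphism (the only other candidate of Frobenius--Perron dimension $4+\sqrt{13}$, namely $1+2\xi$, is eliminated by Lemma \ref{findgraphs} because its reduced fusion matrix has a negative diagonal entry at the $\xi$-position and hence cannot factor as $A^r(A^r)^T$). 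But Lemma \ref{noalg} rules out exactly this algebra structure in $\mathscr{H}_2$, a contradiction.

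I do not foresee any real obstacle in this plan: both steps are direct applications of results already in place. The only bookkeeping needed is to match the dual graph built in the existence proof with graph (3') of Lemma \ref{lem:H6} (determined uniquely by its index $4+\sqrt{13}$) and to read off the underlying object via the reduced fusion matrix formula; once that is done, Lemma \ref{noalg} closes the argument.
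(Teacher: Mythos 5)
Your proposal is correct and follows essentially the same route as the paper: the paper dismisses $\mathscr{H}_3\cong\mathscr{H}_1$ because the fusion rings already differ (your object count is the same observation), and rules out $\mathscr{H}_3\cong\mathscr{H}_2$ by noting that the dual graph (3') would force an algebra structure on $1+\xi+\alpha\xi$ in $\mathscr{H}_2$, contradicting Lemma \ref{noalg}. Your extra bookkeeping (reading the underlying object off the first column of the adjacency matrix and eliminating $1+2\xi$) just makes explicit what the paper leaves implicit.
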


\begin{proof}
 Clearly, $\mathscr{H}_3 $ is not equivalent to $\mathscr{H}_1 $ (they are already different at the level of fusion rings). Suppose $\mathscr{H}_3 $ were equivalent to $\mathscr{H}_2$. Then from the dual graph (3'), we see that $1+\xi+\alpha \xi $ would have to have an algebra structure. But by Lemma \ref{noalg}, that is not the case.
\end{proof}

\begin{lemma}\label{fring}
 Let $N \subset M $ be a subfactor with principal graphs (3) and (3'). Then its dual even part has fusion ring isomorphic to $H_6$.
\end{lemma}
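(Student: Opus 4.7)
The plan is to derive the fusion ring of the dual even part $\mathscr{M}$ directly from the combinatorial data of graphs (3) and (3'). From (3') I would first read off the six simple objects of $\mathscr{M}$: three invertibles $1, \beta, \beta^2$ (necessarily forming $\mathbb{Z}/3\mathbb{Z}$, the only group of order $3$) and three objects $\sigma, \beta\sigma, \beta^2\sigma$ of dimension $d = \frac{3+\sqrt{13}}{2}$, labeled so that left tensor product with $\beta$ cyclically permutes the dimension-$d$ simples. The triangle description of (3') worked out in the proof of Lemma \ref{existence} then records the fusion $\sigma \kappa = \bar\iota + \beta\bar\iota + \zeta$ together with its cyclic variants, and the edges $1 \to \bar\iota$, $\beta \to \beta\bar\iota$, $\beta^2 \to \beta^2\bar\iota$.

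Next, I would compute $\kappa\bar\kappa$ as an $M$-$M$ bimodule using Frobenius reciprocity $(\kappa\bar\kappa, \eta) = (\kappa, \eta \kappa)$ for simple even $\eta$, reading each multiplicity directly off the edges of (3'); this yields $\kappa\bar\kappa = 1 + \sigma + \beta^2 \sigma$. I would then evaluate $\sigma \cdot (\kappa\bar\kappa)$ in two ways: distributing inside $\mathscr{M}$ to get $\sigma + \sigma^2 + \sigma\cdot\beta^2\sigma$, and associating as $(\sigma\kappa)\bar\kappa = \bar\iota\bar\kappa + \beta\bar\iota\bar\kappa + \zeta\bar\kappa$ with each summand expanded by a further Frobenius-reciprocity computation on the graph. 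Matching the two expressions produces a single identity in the Grothendieck group of $\mathscr{M}$ that determines $\sigma^2 + \sigma\cdot\beta^2\sigma$ as an explicit non-negative integer combination of simples. Combined with the relation $\sigma\cdot\beta^2 = \beta^{j}\sigma$ for some $j\in\{1,2\}$ (the two possibilities being that left and right multiplication by $\beta$ commute on $\sigma$, or that $\sigma\beta = \beta^2\sigma$), one pins down $\sigma^2 = 1 + \sigma + \beta\sigma + \beta^2\sigma$, which is precisely the defining fusion rule $[\xi^2] = [1] + [\xi] + [\alpha\xi] + [\alpha^2\xi]$ of $H_6$.

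The main obstacle is ruling out the ``commuting'' alternative $\sigma\beta = \beta\sigma$, which is consistent with the dimension data, integrality, and the matching-of-$(\sigma\kappa)\bar\kappa$ equation alone, yet produces a distinct fusion ring in which $\sigma$ is not self-dual. To resolve this I would invoke the principal graph (3): it forces the algebra $\bar\kappa\kappa = 1 + \mu + \nu$ in $\mathscr{H}_1$, so computing $(\bar\kappa\kappa)^2 = \bar\kappa(\kappa\bar\kappa)\kappa = \bar\kappa(1+\sigma+\beta^2\sigma)\kappa$ and comparing with the expansion of $(1+\mu+\nu)^2$ inside $\mathscr{H}_1$ using Table \ref{h4mult} produces integrality constraints (on the multiplicities of $\mu$ and $\nu$) that are inconsistent with the commuting case but are satisfied in the $H_6$ case. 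A cleaner alternative would be to appeal to uniqueness of the subfactor realizing principal graphs (3) and (3'): Lemma \ref{existence} exhibits such a subfactor whose dual even part is $\mathscr{H}_3$, and combined with standard uniqueness arguments for low-index subfactors with given standard invariant this identifies the fusion ring as $H_6$ for any such $N\subset M$.
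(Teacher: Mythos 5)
Your setup and your identification of the crux are both correct: after reading off the six simples and the $\mathbb{Z}/3\mathbb{Z}$ of invertibles from (3'), everything reduces to deciding between $[\sigma\beta]=[\beta\sigma]$ and $[\sigma\beta]=[\beta^{2}\sigma]$ (equivalently, deciding the conjugation action on the three $d$-dimensional simples), and you rightly observe that dimension counts and the two-ways expansion of $\sigma\cdot(\kappa\bar{\kappa})$ do not settle this. But neither of your proposed resolutions closes the gap. The comparison of $(\bar{\kappa}\kappa)^{2}$ with $\bar{\kappa}(1+\sigma+\beta^{2}\sigma)\kappa$ is vacuous: each summand $\bar{\kappa}\eta\kappa$ is computed via Frobenius reciprocity as $(\bar{\kappa}\eta\kappa,\xi)=(\eta\kappa,\kappa\xi)$, and both $\eta\kappa$ and $\kappa\xi$ are read off the graphs (3') and (3) respectively; no product of two $M$-$M$ simples ever enters, so the resulting identity holds identically for either candidate fusion ring and yields no constraint on the multiplicities of $\mu$ and $\nu$ beyond what the graphs already encode. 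The backup argument is circular: at this stage the fusion ring of $\mathscr{H}_3$ is exactly what is unknown, so knowing that the subfactor of Lemma \ref{existence} has dual even part $\mathscr{H}_3$ (even granting uniqueness of a subfactor with these graphs, which is far from ``standard'' at index $4+\sqrt{13}$) tells you nothing until that fusion ring is computed --- which is the content of Lemma \ref{fring}.

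The missing ingredient is a duality statement, not a further fusion computation; pure Grothendieck-ring manipulation of the graph data cannot separate the two cases, since the commuting alternative (with $\bar{\sigma}=\beta^{j}\sigma$, $j\neq 0$) is consistent with all such identities. The paper's proof instead observes that the two new even vertices at depth two of graph (3) are $\mu$ and $\nu$, which have different dimensions and hence are each self-conjugate; the annular-tangles/rotation argument of \cite{MR1929335}, as used in \cite[\S 3.3]{1007.1730}, transports this to the dual graph and forces the two depth-two vertices of (3'), namely $\beta\sigma$ and $\beta^{2}\sigma$, to be self-conjugate as well. Then $[\beta\sigma]=[\overline{\beta\sigma}]=[\bar{\sigma}\beta^{2}]$ and $[\beta^{2}\sigma]=[\bar{\sigma}\beta]$ together force $[\sigma]=[\bar{\sigma}]$ and $[\beta\sigma]=[\sigma\beta^{2}]$, and the $H_6$ relations follow. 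You need either to supply this rotation argument or to find some other input that sees the duality/$*$-structure of the standard invariant rather than only its fusion graphs.
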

\begin{proof}
 From the graph (3'), we see that the $M-M$ fusion ring contains three dimension one objects (as before we will label them by $1, \beta, \beta^2$), which generate a subring isomorphic to $\mathbb{Z} / 3\mathbb{Z}$. Then there are there are three objects of dimension $\frac{3+\sqrt{13}}{2} $ (as before labeled $\sigma, \beta \sigma, \beta^2 \sigma $), with $\beta \sigma$ and  $\beta^2 \sigma$ the two new objects at depth two. Then either $[\beta \sigma]=[ \bar{ \beta \sigma}] $ or $[\beta \sigma] =[ \bar{\beta^2 \sigma}]$. Since the two new objects in depth two in graph (3) have different dimensions, they are each self-conjugate, so the former holds by \cite[\S 3.3]{1007.1730}  (using an annular tangles argument \cite{MR1929335}). Then $[\beta \sigma] = [\bar{\beta \sigma}]=[ \sigma \beta^2 ]$, and the isomorphism to $H_6$ is easy to deduce.   
\end{proof}

\begin{corollary}
 There exists a unitary fusion category with fusion ring $H_6$ which is Morita equivalent to $\mathscr{H}_2 $ but not isomorphic to it.
\end{corollary}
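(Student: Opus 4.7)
The natural candidate is $\mathscr{H}_3$ itself, and the plan is to collect the facts about $\mathscr{H}_3$ that have just been established and observe that they jointly prove the corollary.

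First I would note that $\mathscr{H}_3$ is unitary: by Definition~\ref{lem:dualspart3} it is the category of $(1+\alpha+\alpha^2)$-$(1+\alpha+\alpha^2)$ bimodules inside the unitary fusion category $\mathscr{H}_2$, and $1+\alpha+\alpha^2$ carries a $Q$-system structure (established in the lemma preceding Lemma~\ref{noalg}), so the bimodule category inherits a $C^*$-structure and is a unitary fusion category.

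Second, I would identify the fusion ring of $\mathscr{H}_3$ with $H_6$ by invoking the subfactor constructed in Lemma~\ref{existence}: that lemma produces an $\mathscr{H}_1$-$\mathscr{H}_3$ subfactor with principal graphs (3) and (3'), so its dual even part is $\mathscr{H}_3$; then Lemma~\ref{fring} (applied to exactly that subfactor) says that this dual even part has fusion ring isomorphic to $H_6$.

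Third, for Morita equivalence with $\mathscr{H}_2$, I would appeal directly to the definition of $\mathscr{H}_3$: the category $\mathscr{M}$ of right $(1+\alpha+\alpha^2)$-modules in $\mathscr{H}_2$ is an invertible $\mathscr{H}_2$-$\mathscr{H}_3$ bimodule category, since $\mathscr{H}_3$ is by definition the dual $(\mathscr{H}_2)^*_\mathscr{M}$. (Alternatively, one can chain the Morita equivalence $\mathscr{H}_1 \sim \mathscr{H}_2$ coming from the Haagerup subfactor with the Morita equivalence $\mathscr{H}_3 \sim \mathscr{H}_1$ from Lemma~\ref{existence}.) Finally, non-isomorphism to $\mathscr{H}_2$ is exactly the corollary immediately preceding Lemma~\ref{fring}, so nothing further needs to be checked.

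There is no real obstacle here; the corollary is essentially an assembly step, and the only thing one might worry about is ensuring that the Morita equivalence we exhibit is genuinely to $\mathscr{H}_2$ (not merely in its Morita class up to an $\mathscr{H}_1$), but this is immediate from the definition of $\mathscr{H}_3$ as bimodules inside $\mathscr{H}_2$.
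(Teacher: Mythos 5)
Your proposal is correct and follows exactly the route the paper intends: the corollary is stated without proof precisely because it is the assembly of Lemma~\ref{existence}, Lemma~\ref{fring}, and the preceding corollary on non-equivalence, applied to $\mathscr{H}_3$, with unitarity coming from the $Q$-system construction. Nothing is missing.
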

%\begin{proof}
 %By \ref{existence}, there is an $\mathscr{H}_1-\mathscr{H}_3$ subfactor with principal graphs (3)-(3'). By \ref{noalg}, $\mathscr{H}_3$ cannot be isomorphic to $\mathscr{H}_2$. By \ref{fring}, the fusion ring of $\mathscr{H}_3$ is isomorphic to $H_6$. The $\mathscr{H}_1-\mathscr{H}_3 $ subfactor implements a Morita equivalence between $\mathscr{H}_3$ and $\mathscr{H}_1$ which is in turn Morita equivalent to $\mathscr{H}_2$.  
%\end{proof}

% \begin{corollary}
%  Any fusion category Morita equivalent to the principal even parts of the Haagerup subfactor must be isomorphic to $\mathscr{H}_1, \mathscr{H}_2 $ or $\mathscr{H}_3 $.% These are, in turn, isomorphic to the categories of $A-A$ bimodules in $\mathscr{H}_2 $ of $A = 1+\xi$, $A = 1$ and $A=1+\alpha+\alpha^2$.
% \end{corollary}
% 
% 
% \begin{lemma}
% The category of $A$-$A$ bimodules in $\mathscr{H}_2$ for each of $A = 1$, $A = 1+ \xi$ and $A=1+\alpha+\alpha^2$ are $\mathscr{H}_2$, $\mathscr{H}_1$, and a fusion category $\mathscr{H}_3$ which is not equivalent to either $\mathscr{H}_1$ or $\mathscr{H}_2$.
% \end{lemma}
% \begin{proof}
% The first is obvious. The second follows from the data of the Haagerup subfactor.  For the third is explained below.
% \end{proof}

\begin{theorem} \label{thm:H1H3}
There are exactly three simple module categories over each of $\mathscr{H}_1$ and $\mathscr{H}_3$.  In each case the three dual fusion categories are the three $\mathscr{H}_i$.  The simple module categories over $\mathscr{H}_1$ have the following graphs for fusion with the object of dimension $\frac{1+\sqrt{13}}{2}$.

\hpic{m1} {.3in} \quad \hpic{m2} {.6in} \quad \hpic{m3} {.6in}

The fusion graphs for the module categories over $\mathscr{H}_3$ (with respect to any of the $\frac{3+\sqrt{13}}{2}$ dimensional objects) are: 

\hpic{m4} {.6in} \quad
\hpic{m7} {.6in} \quad
\hpic{m6} {.4in}
\end{theorem}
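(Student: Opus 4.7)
My plan is to avoid redoing the algebra-object combinatorics for each of $\mathscr{H}_1$ and $\mathscr{H}_3$, and instead transport the $\mathscr{H}_2$ classification across the Morita equivalences we have already constructed. By Lemmas \ref{lem:dualspart1}--\ref{lem:dualspart3} together with Lemma \ref{existence}, each of $\mathscr{H}_1,\mathscr{H}_2,\mathscr{H}_3$ is Morita equivalent to $\mathscr{H}_2$, so the Morita class of $\mathscr{H}_1$ (and of $\mathscr{H}_3$) agrees with that of $\mathscr{H}_2$. Combined with the fact that the Morita class of $\mathscr{H}_2$ consists of exactly $\{\mathscr{H}_1,\mathscr{H}_2,\mathscr{H}_3\}$, this shows that any simple module category over $\mathscr{H}_1$ or $\mathscr{H}_3$ has dual fusion category equal to one of the three $\mathscr{H}_j$.

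For existence, one names three module categories over $\mathscr{H}_1$ with distinct duals: the regular module category (dual $\mathscr{H}_1$), the odd part of the Haagerup subfactor read from the $\mathscr{H}_1$ side (dual $\mathscr{H}_2$), and the odd part of the subfactor from Lemma \ref{existence} built from the algebra $1+\mu+\nu$ (dual $\mathscr{H}_3$). For $\mathscr{H}_3$ the three module categories come analogously from the regular one, the Morita equivalence of Lemma \ref{existence} from the $\mathscr{H}_3$ side (dual $\mathscr{H}_1$), and the Morita equivalence implicit in the algebra $1+\alpha+\alpha^2\subset\mathscr{H}_2$ read from $\mathscr{H}_3$'s side (dual $\mathscr{H}_2$).

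Uniqueness uses the counting dictionary established in Section~2: simple module categories over $\mathscr{C}$ with dual $\mathscr{D}$ biject with Morita equivalences $\mathscr{C}\to\mathscr{D}$ modulo the action of $\mathrm{Out}(\mathscr{D})$. Because $\{\mathscr{H}_1,\mathscr{H}_2,\mathscr{H}_3\}$ forms a connected component of the Brauer-Picard groupoid, every Brauer-Picard hom-set in it has cardinality equal to $|\mathrm{BP}(\mathscr{H}_2)|$, which in turn equals $|\mathrm{Out}(\mathscr{H}_2)|$ since $\mathscr{H}_2$ has a unique module category with dual $\mathscr{H}_2$. Invoking the triviality of $\mathrm{Out}(\mathscr{H}_2)$ (proved independently in Section~5) gives a single Morita equivalence between any two $\mathscr{H}_i,\mathscr{H}_j$, which simultaneously forces $|\mathrm{Out}(\mathscr{H}_j)|=1$ and the existence of exactly one module category over $\mathscr{H}_i$ with dual $\mathscr{H}_j$. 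Hence each $\mathscr{H}_i$ has exactly three simple module categories, one with each dual.

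The fusion graphs m1--m7 are then read off directly from the principal graphs already computed in Lemma \ref{lem:H6} and Lemma \ref{existence}: for the $\mathscr{H}_1$ side one projects onto the even vertices and records multiplication by the object of dimension $\tfrac{1+\sqrt{13}}{2}$, and symmetrically for $\mathscr{H}_3$ using a $\tfrac{3+\sqrt{13}}{2}$-dimensional generator. The main obstacle is the forward reference to Section~5 for $|\mathrm{Out}(\mathscr{H}_2)|=1$; one should verify that the Section~5 argument proceeds by direct analysis of invertible objects and does not secretly depend on the present module category count, so that the logical order can be maintained.
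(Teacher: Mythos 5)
Your overall strategy --- counting module categories via the identity (number of Morita equivalences between $\mathscr{H}_i$ and $\mathscr{H}_j$) $=$ $O_j\cdot B_{ij}$, where $O_j=|\mathrm{Out}(\mathscr{H}_j)|$ and $B_{ij}$ is the number of simple $\mathscr{H}_i$-module categories with dual $\mathscr{H}_j$, anchored by $B_{2j}=1$ from the $\mathscr{H}_2$ classification --- is exactly the paper's strategy, and your reading of the fusion graphs off the known principal graphs (in particular the dual graph (3')) matches the paper as well. But there is a genuine gap at the step where you invoke $|\mathrm{Out}(\mathscr{H}_2)|=1$ ``proved independently in Section~5.'' You flagged this yourself as something to verify, and the verification fails: Section~5 proves triviality \emph{directly} only for $\mathrm{Aut}(\mathscr{H}_1)$ (via the planar algebra argument), and then transports this to $\mathscr{H}_2$ and $\mathscr{H}_3$ using the lemma $\mathrm{Out}(\mathscr{H}_1)\cong\mathrm{Out}(\mathscr{H}_2)\cong\mathrm{Out}(\mathscr{H}_3)$, whose proof explicitly cites the uniqueness of the $\mathscr{H}_i$-module category with dual $\mathscr{H}_i$ for each $i$ --- i.e., the very theorem you are proving. (The 6j-symbol computation of $\mathrm{Out}(\mathscr{H}_2)$ mentioned in a remark is not actually included in the paper, so it cannot be cited to break the cycle. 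Knowing only $\mathrm{Out}(\mathscr{H}_1)=1$ gives $M_{11}=B_{11}$ and $M_{11}=O_2$, which does not yield $O_2=1$ without already knowing $B_{11}=1$.) As written, your argument is circular.

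The fix is that you never needed $O_2=1$ at all: since the common hom-set cardinality $M$ satisfies both $M=O_j B_{2j}=O_j$ and $M=O_j B_{ij}$, the unknown $O_j$ cancels and $B_{ij}=M/O_j=1$ for every $i,j$, with no input from Section~5. This is precisely how the paper argues, and it is why the paper can defer the computation of the outer automorphism groups until after this theorem (indeed, Section~5 then uses this theorem to deduce that all three outer automorphism groups agree). Replacing your appeal to $\mathrm{Out}(\mathscr{H}_2)=1$ with this cancellation makes your proof correct and essentially identical to the paper's.
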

\begin{proof}
Let $M$ be the number of Morita equivalences between $\mathscr{H}_i$ and $\mathscr{H}_j$ (this is clearly independent of $i$ and $j$), let $O_i$ be the number of outer automorphisms of $\mathscr{H}_i$, and let $B_{ij}$ be the number of simple module categories over $\mathscr{H}_i$ whose dual is isomorphic to $\mathscr{H}_j$.  We know that $M = O_j B_{ij}$ and that $B_{2j} = 1$ for any $i$ and $j$.  Hence, we have that $M = O_j B_{2j} = O_j$ for all $j$.  Thus we conclude that $B_{ij} = M/O_j = 1$.  Hence there are exactly three simple module categories over each $\mathscr{H}_i$.

The $\mathscr{H}_1$ module category whose dual is  $\mathscr{H}_1$ is trivial, and so its fusion graph can be read off from the fusion rules for $\mathscr{H}_1$.  The $\mathscr{H}_1$ module category whose dual is $\mathscr{H}_2$ comes from the Haagerup subfactor, and its fusion graph can be read off from fusion rules for the Haagerup subfactor.  Finally, the $\mathscr{H}_1$ module category whose dual is $\mathscr{H}_3$ can be read off from the dual principal graph (3').

The calculations for $\mathscr{H}_3$ are similar.
\end{proof}
\begin{corollary}
Any fusion category Morita equivalent to the principal even parts of the Haagerup subfactor must be isomorphic to $\mathscr{H}_1, \mathscr{H}_2 $ or $\mathscr{H}_3 $.
\end{corollary}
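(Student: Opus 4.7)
The plan is to deduce this corollary essentially as a direct bookkeeping consequence of Theorem \ref{thm:H1H3} together with the classification of simple module categories over $\mathscr{H}_2$ already established (the corollary after Lemma \ref{noalg}).

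First I would recall that, by the definition of Morita equivalence in terms of invertible bimodule categories, any fusion category $\mathscr{D}$ Morita equivalent to $\mathscr{H}_i$ arises as a dual fusion category $(\mathscr{H}_i)^*_{\mathscr{M}}$ for some simple (i.e.\ semisimple indecomposable) left module category $\mathscr{M}$ over $\mathscr{H}_i$. Conversely every such dual is Morita equivalent to $\mathscr{H}_i$ via $\mathscr{M}$ itself. So the Morita equivalence class of $\mathscr{H}_i$ is exactly the set of isomorphism classes of $(\mathscr{H}_i)^*_{\mathscr{M}}$ as $\mathscr{M}$ ranges over simple module categories.

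Next I would apply the previously established classifications: over $\mathscr{H}_2$ there are exactly three simple module categories, the categories of modules for the algebras $1$, $1+\xi$, and $1+\alpha+\alpha^2$, with duals identified as $\mathscr{H}_2$, $\mathscr{H}_1$, and $\mathscr{H}_3$ respectively (Lemmas \ref{lem:dualspart1}, \ref{lem:dualspart2}, and Definition \ref{lem:dualspart3}). Hence the Morita equivalence class of $\mathscr{H}_2$ is contained in $\{\mathscr{H}_1,\mathscr{H}_2,\mathscr{H}_3\}$, and since $\mathscr{H}_3$ was shown to be distinct from $\mathscr{H}_1$ and $\mathscr{H}_2$ (corollary after Lemma \ref{existence}), this set has exactly three elements. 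By Theorem \ref{thm:H1H3} the same three categories exhaust the duals of simple module categories over $\mathscr{H}_1$ (and over $\mathscr{H}_3$).

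Finally I would note that Morita equivalence is transitive and symmetric, so ``Morita equivalent to the principal even parts of the Haagerup subfactor'' is the same condition whether we start from $\mathscr{H}_1$ or from $\mathscr{H}_2$, giving the common Morita equivalence class $\{\mathscr{H}_1,\mathscr{H}_2,\mathscr{H}_3\}$. There is essentially no obstacle at this stage: all the real work has been done in Theorem \ref{thm:H1H3} and in the counting argument relating $M = O_j B_{ij}$; the corollary is a one-line repackaging of those results.
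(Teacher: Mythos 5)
Your proposal is correct and matches the paper's (implicit) argument: the paper states this corollary without proof, as an immediate consequence of Theorem \ref{thm:H1H3} and the classification of simple module categories over $\mathscr{H}_2$ with their duals, which is exactly the bookkeeping you carry out. Nothing is missing.
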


\begin{theorem} \label{subthm}
 The complete list of irreducible subfactors whose even parts are Morita equivalent to $\mathscr{H}_i$ is as follows; each exists and is unique up to isomorphism of the planar algebra. Note that the $\mathscr{H}_i-\mathscr{H}_i $ subfactors are self-dual while the $\mathscr{H}_i-\mathscr{H}_j $ subfactors for $i \neq j $ come in non-self-dual pairs; we only list each pair once.

(a) $\mathscr{H}_1-\mathscr{H}_2$: One with principal graphs (1)-(1') and one with principal graphs (2)-(2').

(b) $\mathscr{H}_1-\mathscr{H}_3$: One subfactor with principal graphs (3)-(3') and one with principal graphs (5)-(5').

(c) $\mathscr{H}_2-\mathscr{H}_3$: One subfactor with principal graphs (6')-(6'). 

(d) $\mathscr{H}_1-\mathscr{H}_1$: One subfactor each with principal graphs (4)-(4), (6)-(6), and (7)-(7).

(e) $\mathscr{H}_2-\mathscr{H}_2$: One subfactor with principal graph (4')-(4').

(f) $\mathscr{H}_3-\mathscr{H}_3$: One subfactor with principal graph (4')-(4').
\end{theorem}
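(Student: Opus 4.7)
The plan is to show every irreducible subfactor of the type in the statement corresponds to a simple $Q$-system in one of the $\mathscr{H}_i$ which tensor generates, and then to classify such $Q$-systems. I will proceed in four steps: (i) enumerate candidate simple algebra objects; (ii) determine the Morita type of each; (iii) bootstrap the $Q$-system property; and (iv) identify dual graphs and check uniqueness.

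For step (i), the combinatorial classifications already proved produce a finite list. In $\mathscr{H}_1$ the seven graphs (1)--(7) exhaust the possible principal graphs of nontrivial simple algebras with fusion ring $H_4$; in any fusion category with fusion ring $H_6$, and in particular in both $\mathscr{H}_2$ and $\mathscr{H}_3$, Lemma \ref{lem:H6} gives the seven graphs (1')--(7'). For step (ii), Theorem \ref{thm:H1H3} asserts each $\mathscr{H}_i$ has exactly three simple module categories, one with dual each $\mathscr{H}_j$, and identifies each by a concrete fusion graph. The module category associated to an algebra object is its category of right modules, whose fusion graph is read off from the principal graph of the algebra. Matching principal graphs against these three fusion graphs partitions the candidates by Morita type. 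Any algebra whose $Q$-system does not tensor generate the ambient $\mathscr{H}_i$ is then discarded, because the associated subfactor has a smaller principal even part not Morita equivalent to $\mathscr{H}_i$; this removes the $(7')$ algebra $1+\alpha+\alpha^2$ in $\mathscr{H}_2$, whose associated subfactor is the $D_4$ subfactor with principal even part $\mathrm{Vec}(\mathbb{Z}/3\mathbb{Z})$, and the analogous $(7')$ algebra in $\mathscr{H}_3$.

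Step (iii) uses Lemma \ref{lem:QTransfer} to bootstrap: once one simple algebra in a module category over $\mathscr{C}$ is known to be a $Q$-system, every simple algebra representing that module category is a $Q$-system. The $Q$-systems $\{1,\, 1+\xi,\, 1+\alpha+\alpha^2\}$ in $\mathscr{H}_2$ exhaust its three module categories; similarly $\{1,\, 1+\nu,\, 1+\mu+\nu\}$ in $\mathscr{H}_1$ works, the last by Lemma \ref{existence}; for $\mathscr{H}_3$ the trivial algebra handles the $\mathscr{H}_3$-$\mathscr{H}_3$ class, while the dual-side $Q$-systems of the $\mathscr{H}_1$-$\mathscr{H}_3$ subfactor of Lemma \ref{existence} and of an $\mathscr{H}_2$-$\mathscr{H}_3$ subfactor (constructed explicitly as the intermediate subfactor described in the introduction) cover the remaining two classes.

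For step (iv), uniqueness within each Morita class follows from direct enumeration of simple algebra objects up to isomorphism using the fusion graphs of the module categories from Theorem \ref{thm:H1H3}; this yields exactly the multiplicities in (a)--(f). Dual principal graphs follow by applying the principal-graph construction to the dual subfactor, either symmetrically in the self-dual cases (d)--(f), or by pairing each side's graph via the unique Morita equivalence using the technique of Lemma \ref{existence} (cf.\ Figure \ref{buildinggraph}). The main obstacle I anticipate is checking that subfactors with identical graphs in non-isomorphic ambient categories are genuinely distinct, in particular (e) and (f), both of which have graph (4')-(4') but live respectively in the Morita class of $\mathscr{H}_2$ and $\mathscr{H}_3$; this is ensured by the non-isomorphism of $\mathscr{H}_2$ and $\mathscr{H}_3$ (corollary to Lemma \ref{existence}).
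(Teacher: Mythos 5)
Your overall strategy --- reduce to the unique module/bimodule category between each pair $\mathscr{H}_i$, $\mathscr{H}_j$ coming from Theorem \ref{thm:H1H3}, realize every subfactor as the internal endomorphisms of a simple object there, pin down the possible principal graphs with the combinatorial lemmas for $H_4$ and $H_6$, and bootstrap the $Q$-system property with Lemma \ref{lem:QTransfer} --- is essentially the paper's proof, and steps (i)--(iii) are sound.

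Two points need repair. First, your existence argument for the $\mathscr{H}_2$--$\mathscr{H}_3$ subfactor points at the wrong object: the intermediate subfactors described in the introduction are $N \subset P$ with graphs (3)-(3'), which is exactly the $\mathscr{H}_1$--$\mathscr{H}_3$ subfactor of Lemma \ref{existence}, and $P \subset M$ of index $3$, whose even parts are copies of $\mathrm{Vec}(\mathbb{Z}/3\mathbb{Z})$; neither is an $\mathscr{H}_2$--$\mathscr{H}_3$ subfactor. The paper instead produces case (c) by composing the $\mathscr{H}_2$--$\mathscr{H}_1$ and $\mathscr{H}_1$--$\mathscr{H}_3$ bimodule categories as in the proof of Lemma \ref{noalg}: one checks by Frobenius reciprocity that the composite $\iota\kappa$ is irreducible, so $\underline{\mathrm{Hom}}(\iota\kappa,\iota\kappa)$ is a simple algebra of dimension $\frac{33+9\sqrt{13}}{2}$, forcing graphs (6')-(6'). (Your own step (iii) can be made to do the same job: $1+\alpha+\alpha^2$ is a $Q$-system realizing the $\mathscr{H}_2$ module category with dual $\mathscr{H}_3$, so by Lemma \ref{lem:QTransfer} every internal endomorphism algebra of a simple object there is a $Q$-system, and the only tensor-generating candidate graph at the resulting dimension is (6').) Second, in step (iv) the phrase ``direct enumeration of simple algebra objects up to isomorphism'' hides the key counting step: distinct simple objects of the bimodule category can give isomorphic algebras, precisely when they differ by tensoring with an invertible object of the dual category. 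This $\mathbb{Z}/3\mathbb{Z}$ identification is what collapses, for instance, the three equal-dimension odd vertices of graph (3') to a single subfactor and produces the multiplicities claimed in (a)--(f); without making it explicit, your enumeration would overcount.
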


\begin{proof}
 For each choice of $\mathscr{H}_i-\mathscr{H}_j$, we have a unique bimodule category up to isomorphism, so all the irreducible subfactors come from taking internal endomorphisms of simple objects in that bimodule category. The odd bimodules in the previously constructed $\mathscr{H}_1-\mathscr{H}_2$ and $\mathscr{H}_1-\mathscr{H}_3$ subfactors give us the list of simple objects in those two categories; note that in each case the three odd bimodules of the same dimension give the same subfactor, since they are just twists by the $\mathbb{Z}/3$ action. Similarly, looking at the even bimodules in those subfactors give the $\mathscr{H}_i-\mathscr{H}_i$ type subfactors. That leaves $\mathscr{H}_2-\mathscr{H}_3$; an $\mathscr{H}_2-\mathscr{H}_3$ subfactor with index $\frac{33+\sqrt{13}}{2} $ can be constructed as in the proof of Lemma \ref{noalg}. It must have principal graphs (6) and (6'), from which the list of simple objects and subfactors may be obtained.    
\end{proof}

\section{The intermediate subfactor lattices}

Another important analogue of ``subgroups'' which appears in subfactor theory is the lattice of intermediate subfactors.  For the $M^G \subset M$ the lattice of intermediate subfactors is the subgroup lattice for $G$.  From the point of view of fusion categories, a subfactor is an algebra object $A$ in a fusion category $\mathscr{C}$, and the intermediate subfactors are just subalgebras of $A$.  This lattice has been studied much more on the subfactor side (e.g. \cite{MR1409040, MR1437496, MR2257402, MR2418197, MR2670925, MR2493615, MR2393428}), while it has not been a major topic in the study of fusion categories.  Nonetheless we believe that this topic should be equally interesting in both fields.  We will use subfactor language in this section so as to be able to use results from \cite{MR2418197} without modification; however none of the results in this section rely seriously on subfactor techniques and could easily be translated into fusion category language.

Recall that the Galois group (written $Gal(M / N)$) of a subfactor $N \subset M$ is the group of automorphisms of $M$ which fix $N$ pointwise.  (In terms of algebra objects, this is just the group of algebra automorphisms of $A$ which restrict to the identity morphism on the unit subobject.)  For an irreducible subfactor, the Galois group is given by the group of invertible objects which are located at a distance of at most $2$ from the identity object (``*'') on the dual principal graph. The invertible objects at depth $2$ are recognizable on the graph as the $1$-valent vertices at depth $2$. The Galois group acts on the lattice of intermediate subfactors $\{ P | N \subseteq P \subseteq M\}$. Since the lattice of intermediate subfactors can be naturally identified with the dual lattice of the intermediate subfactor lattice of the dual subfactor via the basic construction, the Galois group of the dual subfactor also acts on the intermediate subfactor lattice of $N \subset M$. In general these two groups and actions are different.  (From the fusion category perspective, the ``basic construction'' replaces $A \in \mathscr{C}$ with the algebra object $A \otimes A$ in the fusion category of $A$-$A$ bimodule objects in $\mathscr{C}$.)

 We will use the labeling convention $\iota={}_P P_N $ and $\kappa={}_M M_P $ when discussing an intermediate subfactor $N \subset P \subset M$. We will only consider irreducible subfactors. When dealing with multiple intermediate subfactors, we will label the objects $\iota_P, \kappa_P $, etc. If $\alpha$ is an element of the Galois group of $N \subset M$, we will freely identify it with the corresponding $M-M$ bimodule ${}_M L^2(M)_{\alpha(M)}  $.

\begin{lemma} \label{gal}
Let $N \subset P \subset M $ be an intermediate subfactor and let $\alpha$ be a nontrivial element of the Galois group. Then $\alpha(P) \cong P $ as $N-N$ bimodules. If $N \subset P $ and $P \subset M $ have trivial Galois groups, then $\alpha(P) \neq P $.   
\end{lemma}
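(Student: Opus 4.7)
For the first claim, my plan is to use the automorphism $\alpha$ itself as the bimodule isomorphism. Since $\alpha$ is a bijection of $M$ that fixes $N$ pointwise, its restriction to $P$ gives a linear bijection $\alpha|_P \colon P \to \alpha(P)$, and the identity $\alpha(npn') = \alpha(n)\alpha(p)\alpha(n') = n\,\alpha(p)\,n'$ for $n, n' \in N$ and $p \in P$ shows that this restriction intertwines the left and right $N$-actions. So $\alpha|_P$ is the desired isomorphism of $N$-$N$ bimodules between $P$ and $\alpha(P)$.

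For the second claim, the approach is a short two-step contradiction argument: show that if $\alpha(P) = P$ then $\alpha$ must fix $P$ pointwise, and then use this to deduce $\alpha = \mathrm{id}_M$. First, if $\alpha(P) = P$ as subalgebras of $M$, then $\alpha$ restricts to an automorphism of $P$ that fixes $N$ pointwise, so $\alpha|_P \in \mathrm{Gal}(P/N)$. The hypothesis that this Galois group is trivial forces $\alpha|_P = \mathrm{id}_P$, i.e., $\alpha$ fixes $P$ pointwise. Second, this gives $\alpha \in \mathrm{Gal}(M/P)$, which is also trivial by hypothesis, so $\alpha = \mathrm{id}_M$, contradicting the assumed nontriviality of $\alpha$.

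There is no serious obstacle in either part; both arguments are essentially diagram chases through the definitions of Galois group and intermediate subfactor. The key conceptual point worth flagging is that set-theoretic equality $\alpha(P) = P$ inside $M$ is a strictly stronger condition than the bimodule isomorphism $\alpha(P) \cong P$ (which always holds, by the first part), and it is precisely this stronger condition that can be fed into the Galois-group hypotheses. Translating between the two notions is the only subtlety.
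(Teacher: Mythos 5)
Your proof is correct and follows essentially the same route as the paper: the first part is exactly the observation the paper dismisses as obvious (the restriction of $\alpha$ intertwines the $N$-actions because $\alpha$ fixes $N$ pointwise), and the second part is verbatim the paper's two-step argument via $\mathrm{Gal}(P/N)$ and then $\mathrm{Gal}(M/P)$. No issues.
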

\begin{proof}
The first statement is obvious. For the second, suppose that $N \subset P $ and $P \subset M $ have trivial Galois groups and $\alpha(P)=P $. By triviality of $Gal(P/N) $, we have $\alpha$ restricted to $P$ is the identity.  Thus $\alpha$ is an element of $Gal(M/P)$ and hence is trivial.
\end{proof}

\begin{lemma} \label{biglattice}
The $\mathscr{H}_2-\mathscr{H}_3$ subfactor $N \subset M$ with graphs (6')-(6') has exactly nine index $4+\sqrt{13}$ intermediate subfactors.
\end{lemma}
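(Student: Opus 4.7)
The plan is to translate the question into counting sub-Q-systems of the algebra object $A={}_NM_N\in\mathscr{H}_2$, and then to exploit the free action of the two Galois groups to produce exactly nine intermediate subfactors. First, observe that $[M:N]=\tfrac{33+9\sqrt{13}}{2}=(4+\sqrt{13})\cdot\tfrac{5+\sqrt{13}}{2}$. The only fusion subcategories of $\mathscr{H}_2$ are $\mathrm{Vec}$, $\mathrm{Vec}(\mathbb{Z}/3\mathbb{Z})$, and $\mathscr{H}_2$, and none of these admits a simple algebra of dimension $4+\sqrt{13}$ (the first two only host algebras of dimensions $1$ and $3$, while by Theorem~\ref{subthm} no subfactor with principal even part $\mathscr{H}_2$ has this index). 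Hence ``index $4+\sqrt{13}$'' must refer to $[M:P]=4+\sqrt{13}$, equivalently $[P:N]=\tfrac{5+\sqrt{13}}{2}$. Theorem~\ref{subthm} then pins down both pieces: $N\subset P$ is the dual Haagerup subfactor ($\mathscr{H}_2$-$\mathscr{H}_1$ with graphs (1')-(1)) and $P\subset M$ is the $\mathscr{H}_1$-$\mathscr{H}_3$ subfactor with graphs (3)-(3'); the middle bimodule category $\mathscr{H}_1$ matches on both sides.

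Next, decompose $A$ as an object in $\mathscr{H}_2$. By Lemma~\ref{lem:algebra-bound} each $\xi$-type summand has multiplicity at most $\dim\xi<4$; combining with the $\mathbb{Z}/3\mathbb{Z}$-symmetry from conjugation by $\alpha$ (which cycles $\xi,\alpha\xi,\alpha^2\xi$) together with the total dimension constraint, the unique possibility is
\[A\;\cong\;1+\alpha+\alpha^2+3(\xi+\alpha\xi+\alpha^2\xi).\]
A sub-Q-system of $A$ of dimension $\tfrac{5+\sqrt{13}}{2}$ must therefore be isomorphic, as a Q-system in $\mathscr{H}_2$, to $1+\alpha^i\xi$ for some $i\in\{0,1,2\}$, namely the Haagerup Q-system and its two $\alpha$-inner-automorphism twists.

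To produce the intermediates, first construct one $P_0$ by composing the dual Haagerup with an $\mathscr{H}_1$-$\mathscr{H}_3$ subfactor of graphs (3)-(3'); uniqueness of $N\subset M$ in Theorem~\ref{subthm}(c) identifies the composite with our subfactor. Then exploit the two commuting Galois actions: $\mathrm{Gal}(M/N)\cong\mathbb{Z}/3\mathbb{Z}$ generated by $\beta\in\mathscr{H}_3$, and $\mathrm{Gal}(\tilde M/\tilde N)\cong\mathbb{Z}/3\mathbb{Z}$ generated by $\alpha\in\mathscr{H}_2$. The $\alpha$-action cycles the three isomorphism types $1+\alpha^i\xi$, while the $\beta$-action permutes sub-Q-systems within a given type. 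Both constituent subfactors of $N\subset P_0\subset M$ have trivial Galois groups (their principal graphs (1') and (3) carry no depth-$2$ invertibles), so Lemma~\ref{gal} forces each $\mathbb{Z}/3\mathbb{Z}$ to act freely on $P_0$; the joint orbit under $\mathbb{Z}/3\mathbb{Z}\times\mathbb{Z}/3\mathbb{Z}$ then has size $9$.

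The main obstacle will be exhaustiveness: showing that no intermediate of the required index lies outside this single Galois orbit. Equivalently, for each fixed isomorphism type $1+\alpha^i\xi$ one must verify that there are exactly three sub-Q-systems inside $A$ and that they form one $\mathrm{Gal}(M/N)$-orbit. I would tackle this by combining the multiplicity-$3$ appearance of each $\alpha^i\xi$ in $A$ (from the decomposition above) with the freeness of the $\beta$-action on $P_0$ and the rigidity of the Q-system structure, ruling out any stray sub-Q-system by noting that it would produce an additional intermediate inconsistent with the already-accounted-for Galois orbit count.
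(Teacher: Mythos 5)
Your setup and your lower bound track the paper's argument closely: you correctly resolve the index ambiguity ($[M:P]=4+\sqrt{13}$, so $[P:N]=\frac{5+\sqrt{13}}{2}$ and ${}_NP_N\cong 1+\alpha^i\xi$ for some $i$), and you produce nine intermediates by letting $\mathrm{Gal}(M/N)\cong\mathbb{Z}/3\mathbb{Z}$ (which preserves the $N$-$N$ bimodule class of $P$ and acts freely on the orbit of $P_0$ by Lemma \ref{gal}, since both constituent subfactors have trivial Galois groups) and the dual Galois group (which permutes the three classes $1+\xi$, $1+\alpha\xi$, $1+\alpha^2\xi$) act on one constructed intermediate. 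This is exactly how the paper gets the nine.

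The gap is precisely where you flag it: exhaustiveness, and the strategy you sketch does not close it. Knowing that $\alpha^i\xi$ occurs with multiplicity $3$ in $A={}_NM_N$ does not bound the number of sub-Q-systems of $A$ isomorphic to $1+\alpha^i\xi$ by three: such a sub-Q-system corresponds to a line in the $3$-dimensional multiplicity space $\mathrm{Hom}(\alpha^i\xi,A)$ on which the multiplication closes, and a priori there could be more than three such lines, or a continuum of them; counting multiplicities alone decides nothing. Moreover, ``ruling out any stray sub-Q-system by noting that it would produce an additional intermediate inconsistent with the already-accounted-for Galois orbit count'' is circular --- the orbit of $P_0$ accounts only for itself, and the entire question is whether every intermediate of the right index lies in that orbit. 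The paper closes this with a genuinely additional input: by \cite[Lemma 6.1]{MR1932664}, any two intermediates $P,Q$ with ${}_NP_N\cong{}_NQ_N$ admit an isomorphism $\pi:P\to Q$ fixing $N$ pointwise; a dimension count shows that the $M$-$M$ sector $[\kappa_Q\pi\bar{\kappa}]$ contains an invertible sector $[\theta]$, and Frobenius reciprocity (as in \cite[Theorem 4.3]{MR2418197}) gives $[\theta\kappa]=[\kappa_Q\pi]$ with $\theta$ in the Galois group and $\theta(P)=Q$. Hence each of the three bimodule classes is a single $\mathrm{Gal}(M/N)$-orbit, so contains exactly three intermediates. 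Without this rigidity step (or a substitute for it), your argument proves only that there are \emph{at least} nine.
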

\begin{proof}
Let $\xi$ be a noninvertible object in $\mathscr{H}_3 $ and let $\alpha $ be a nontrivial invertible object in $\mathscr{H}_3$, such that $\alpha $ is an element of the Galois group of $N \subset M$. Similarly, let $\rho$ be a noninvertible object in $\mathscr{H}_2 $ and let $\beta $ be a nontrivial invertible object in $\mathscr{H}_3$, such that $\beta $ is an element of the dual Galois group (i.e. the Galois group of the downward basic construction $N_{-1} \subset N \subset M$). 

By the construction in the proof of Lemma \ref{existence} there is at least one intermediate subfactor $N \subset P \subset M$ with index $[M:P]=4+\sqrt{13}$. Without loss of generality we may assume that $[\bar{\iota} \iota] = [1] + [\rho] $. Then by Lemma \ref{gal}, $P, \alpha(P), \alpha^2(P)$ are distinct, and all isomorphic to $[1] + [\rho] $ as $N-N$ bimodules.     

  Let $\bar{P} $ be the dual intermediate subfactor to $P$ in the downward basic construction $N_{-1} \subset N \subset M$. Let $\zeta = {}_N N {}_{\bar{P}} $. Then $[\zeta \bar{\zeta}]=[\bar{\iota}\iota ]=[ 1] + [\rho ] $. Consider the subfactor $\bar{Q}=\beta(\bar{P})$; let $\zeta_Q = {}_N N {}_{\bar{Q}} $. Let $N \subset Q \subset M$ be the dual intermediate subfactor of $\bar{Q}$. Then $[\bar{\iota_Q} \iota_Q]=[\zeta_Q \bar{\zeta_Q}] = [\beta \zeta \bar{\zeta} \beta^{-1} ] =[1]+ [\beta \rho \beta^{-1} ]= [1 ] + [\beta^2 \rho]$. Similarly, the dual subfactor of $\beta^2(\bar{P}) $ is an intermediate subfactor of $N \subset M$ which is isomorphic to $[1] + [\beta \rho] $ as an $N-N$ bimodule. Looking at the actions of $\alpha $ and $\alpha^2 $ on these subfactors shows that there must be three of each.

So we have found nine intermediate subfactors of $N \subset M$ with index $4+\sqrt{13}$. There is a set of three for each $N-N$ bimodule class $[1] + [\rho], [1]+ [\beta \rho ], [ 1] + [\beta^2 \rho] $. The Galois group cyclically permutes each set, and the dual Galois group cyclically permutes the three sets. It remains to show that there are no others.

Once again let $P$ be an intermediate subfactor with $[\bar{\iota} \iota]=[1 ] + [\rho ] $, and let $Q$ be another intermediate with $[\bar{\iota_Q} \iota_Q ]=[1 ]+[\rho ] $. By \cite[Lemma 6.1]{MR1932664} there is an isomorphism $\pi: P \rightarrow Q $ which fixes $N$ pointwise; we will also use $\pi $ to refer to the corresponding $Q-P $ bimodule, $_Q L^2(Q)_{\pi(P)} $. Note that $\pi \iota = \iota_Q $. By a dimension comparison, the $M-M$ sector $[\kappa_Q \pi \bar{\kappa}]$ must contain a sector of dimension $1$, i.e. one of $[1],[\alpha], [\alpha^2]$. Call this one dimensional sector $[\theta] $. Then as in the proof of \cite[Theorem 4.3]{MR2418197}, we have by Frobenius reciprocity $[\theta \kappa]=[\kappa_Q \pi] $, and if we choose $\theta $ in the Galois group then $\theta(P)=Q $. This shows that any intermediate subfactor whose $N-N$ bimodule structure is $[1 ]+[\rho ]$ is in the orbit of $P$ under the action of the Galois group, so there can be only three of them. Similarly, there are only three each for $[1 ]+[\beta \rho ] $ and $[1 ]+[\beta^2 \rho ]$.

\end{proof}

\begin{theorem}
 The subfactors in the list in Theorem \ref{subthm} have the following intermediate subfactor lattices: The subfactor with graphs (5)-(5') has a single proper intermediate subfactor, with index $3$. The subfactor with graphs (2)-(2') has four proper intermediate subfactors, one with index $3$ and the other three with index $\frac{5+\sqrt{13}}{2}$. The subfactor with graphs (6)-(6) has two proper intermediate subfactors, both with index $\frac{5+\sqrt{13}}{2}$. The ($\mathscr{H}_2-\mathscr{H}_3$) subfactor with graphs (6')-(6') has five proper intermediate subfactors: one with index $3$, one with index $\frac{11+3\sqrt{13}}{2}$, and nine with index $4+\sqrt{13}$. All other $\mathscr{H}_i-\mathscr{H}_j$ subfactors have no proper intermediate subfactors.
\end{theorem}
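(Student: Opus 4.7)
The strategy is to translate the problem into the language of algebra objects. Each subfactor $N\subset M$ in Theorem \ref{subthm} corresponds to a simple $Q$-system $A$ in one of the $\mathscr{H}_i$, and intermediate subfactors $N\subset P\subset M$ correspond to unital subalgebras $1\subset B\subset A$ (equivalently, to the realisation of $A$ as a $Q$-system in the category of $B$-$B$ bimodules in $\mathscr{H}_i$). Thus an intermediate yields a factorisation $\dim A=\dim B\cdot [M:P]$ in which both $\dim B$ and $[M:P]$ must occur in the lists of admissible algebra dimensions from Lemma \ref{lem:H6} and its $H_4$ counterpart. For most of the subfactors on the list this already rules out any nontrivial factorisation, handling all cases in which we claim no proper intermediate exists.

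For the subfactors which do have intermediates, I would locate them directly on the dual principal graph: as in the proof of Lemma \ref{existence}, every intermediate $N\subset P\subset M$ contributes an invertible $P$-$P$ bimodule sitting at depth $2$ on the dual graph, so the cut structure of the dual graph reveals one intermediate per ``bottleneck.'' This immediately gives: the index-$3$ intermediate for (5)-(5'); the index-$3$ intermediate for (2)-(2'), together with three index-$(5+\sqrt{13})/2$ Galois translates obtained from the three invertible depth-$2$ vertices on (2') via Lemma \ref{gal}; and the two index-$(5+\sqrt{13})/2$ intermediates for (6)-(6) which are swapped by the $\mathbb{Z}/2$ symmetry visible on that graph. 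For the (6')-(6') subfactor, I would invoke Lemma \ref{biglattice} directly for the nine index-$(4+\sqrt{13})$ intermediates; the intermediate of index $3$ is forced by the three invertible depth-$2$ vertices on (6'), and the intermediate of index $(11+3\sqrt{13})/2$ is obtained from the factorisation $\tfrac{33+9\sqrt{13}}{2}=3\cdot\tfrac{11+3\sqrt{13}}{2}$, constructed explicitly as the dual intermediate to the index-$3$ one via the basic construction.

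Once each claimed intermediate has been exhibited, I would establish uniqueness (within each fixed $N$-$N$ bimodule isomorphism class of intermediates) by the argument used in the proof of Lemma \ref{biglattice}: any two intermediates with isomorphic underlying $N$-$N$ bimodules are conjugated by a Galois element, so the number of such intermediates is at most the index of the stabiliser inside the Galois group. Combined with the dual Galois action coming from the downward basic construction, this pins down the counts exactly. The main obstacle is the bookkeeping for (6')-(6'): after extracting the nine intermediates from Lemma \ref{biglattice}, one must verify that the remaining two intermediates are the only ones of their respective indices, which will come down to checking (via Frobenius reciprocity and the $H_6$ fusion rules) that no further one-dimensional summand of $[\kappa_Q\pi\bar\kappa]$ can appear. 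The subfactors with no proper intermediate (those with graphs (1), (1'), (4), (4'), (6'), (7), as well as (3)-(3') and (5')) are handled uniformly: their indices admit no factorisation into two admissible algebra dimensions other than the trivial one, so no intermediate algebra $B$ of the required dimension can exist.
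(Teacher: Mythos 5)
Your overall architecture is the same as the paper's: rule out most cases by noting that $[M:P]$ and $[P:N]$ must both occur among the admissible algebra dimensions, read the index-$3$ and co-index-$3$ intermediates off the invertible depth-$2$ vertices, and invoke Lemma \ref{biglattice} for the nine index-$(4+\sqrt{13})$ intermediates of $(6')$-$(6')$. However, there is a genuine gap in your treatment of the $(6)$-$(6)$ subfactor. Your uniqueness principle --- that any two intermediates with isomorphic underlying $N$-$N$ bimodules are conjugate under the Galois group --- is false, and it fails exactly here: the $(6)$-$(6)$ subfactor is an $\mathscr{H}_1$-$\mathscr{H}_1$ subfactor, $\mathscr{H}_1$ has no nontrivial invertible objects, so the Galois group is trivial, yet both intermediates of index $\frac{5+\sqrt{13}}{2}$ are isomorphic to $1+\eta$ as $N$-$N$ bimodules. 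Your principle would force them to coincide. The reason the mechanism of Lemma \ref{biglattice} does not transfer is that its dimension-comparison step breaks down: $[\kappa_Q\pi\bar\kappa]$ has dimension $\frac{5+\sqrt{13}}{2}=d(\nu)$, which is realized by the single simple object $\nu$ of $\mathscr{H}_1$, so no invertible summand is forced. The ``$\mathbb{Z}/2$ symmetry visible on the graph'' is not a Galois symmetry and neither produces nor counts intermediates. The paper instead imports the explicit construction and the noncommuting-quadrilateral classification from \cite{MR2418197} (Theorem 5.2 and Lemma 3.14) for this case, and likewise uses \cite{MR2418197} (Theorems 3.10 and 5.19) for $(2)$-$(2')$; there your proposal also leaves the existence of the first index-$\frac{5+\sqrt{13}}{2}$ intermediate unaccounted for, since the three invertible depth-$2$ vertices of $(2')$ give the Galois group and hence the single index-$3$ intermediate $M^G$, not the three Haagerup intermediates --- you need one of the latter in hand before Lemma \ref{gal} can generate the other two. (For $(2)$-$(2')$ the Galois-conjugation argument does at least give the upper bound of three, since there the $M$-$M$ category is $\mathscr{H}_2$ and the dimension count does force an invertible summand.)

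A smaller point: your closing list of subfactors with no proper intermediate includes $(6')$ and $(5')$, contradicting your own earlier analysis; the subfactors without proper intermediates are those with graphs $(1)$-$(1')$, $(3)$-$(3')$, $(4)$-$(4)$, $(7)$-$(7)$, and the two with graphs $(4')$-$(4')$.
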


\begin{proof}
 Note that if $N \subset M$ is a $\mathscr{H}_i-\mathscr{H}_j$ subfactor and $N \subset P \subset M$ is a proper intermediate subfactor, then $[M:P],[P:N]$ must be indices of the graphs (1)-(7),(1')-(7'). This immediately implies the last statement.

For the first statement, we see from the graph of (5') that there is a unique index $3$ intermediate subfactor. From the graph (5) we see that there is no co-index $3$ intermediate subfactor. From the list of indices there cannot be any other intermediate subfactors.

In \cite[Theorem 5.19]{MR2418197}, a (2)-(2') subfactor with three index $\frac{5+\sqrt{13}}{2}$ subfactors was constructed. From the graph (2') we see that there is also an index $3$ intermediate subfactor. Let $P$ and $Q$ be distinct index $\frac{5+\sqrt{13}}{2}$ intermediate subfactors. From the graph (2) and \cite[Theorem 3.10]{MR2418197}  we find that the quadrilateral generated by $P$ and $Q$ is noncommuting, and then it follows from \cite[Theorem 5.19]{MR2418197} that $P$ is in the orbit of $Q$ under the action of the Galois group of $N \subset M$ on the intermediate subfactor lattice. Since the Galois group is $\mathbb{Z}/3\mathbb{Z} $, it follows that there are only three intermediate subfactors of index $\frac{5+\sqrt{13}}{2}$. From the graph (2') we see that there is no intermediate subfactor of co-index $\frac{5+\sqrt{13}}{2}$. From the list of indices there cannot be any other intermediate subfactors. 

In \cite[Theorem 5.2]{MR2418197} and the following discussion, a (6)-(6) subfactor $N \subset M$ with two index $\frac{5+\sqrt{13}}{2}$ intermediate subfactors was constructed; by uniqueness it is the same (6)-(6) subfactor which appears on the list in Theorem \ref{subthm}. By \cite[ Lemma 3.14]{MR2418197} there are no other index $\frac{5+\sqrt{13}}{2}$ intermediate subfactors. From the list of indices there cannot be any other intermediate subfactors.

%  Let $P$ and $Q$ be distinct index $\frac{5+\sqrt{13}}{2}$ intermediate subfactors. From the graph (6) and \cite{GI}, Theorem 3.10 we find that the quadrilateral generated by $P$ and $Q$ is noncommuting. By \cite{GI}, Corollary 3.11 the angle between $P$ and $Q$ is $cos^{-1}\frac{2 }{3+\sqrt{13}}$. 

From the graph (6'), we see that the $\mathscr{H}_2-\mathscr{H}_3$ subfactor with graphs (6')-(6') has a unique index $3$ intermediate subfactor and unique co-index $3$ intermediate subfactor. From Lemma \ref{noalg} there is no index $\frac{5+\sqrt{13}}{2}$ intermediate subfactor, and by Lemma \ref{biglattice} there are exactly nine intermediate subfactors with index $4+\sqrt{13}$. From the list of indices there are no other intermediate subfactors.

\end{proof}

\begin{figure}
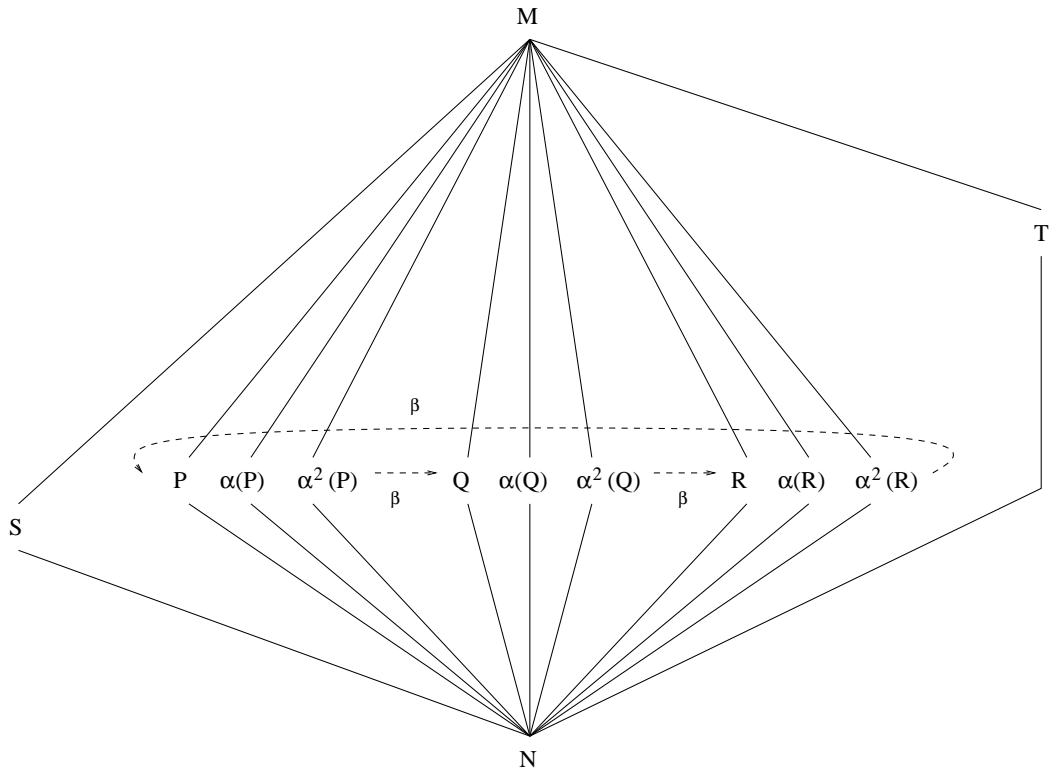

  
  \begin{center}
    \hpic{lattice} {4in}
  \end{center}
  \caption{The intermediate subfactor lattice of the $\displaystyle \frac{33+9\sqrt{13}}{2} $ subfactor: $G = Gal(M/N)=\{1, \alpha, \alpha^2 \}$, $H=Gal(N/N_{-1})=\{1, \beta, \beta^2 \} $, $S=N \ltimes H  $, $T= M^G $, $[P;N]=[Q:N]=[R:N]=\frac{5+\sqrt{13}}{2} $ .}
\end{figure}

\section{Outer automorphisms and the Brauer-Picard Groupoid}

The goal of this section is to prove that the outer automorphism group of each of the $\mathscr{H}_i$ is trivial.  This completes the calculation of the Brauer-Picard groupoid.  The argument we give uses Emily Peters's description of the Haagerup subfactor planar algebra \cite{0902.1294}.

\begin{lemma}
$\mathrm{Out}(\mathscr{H}_1) \cong \mathrm{Out}(\mathscr{H}_2) \cong \mathrm{Out}(\mathscr{H}_3)$.
\end{lemma}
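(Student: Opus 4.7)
The plan is a counting argument inside the Brauer-Picard groupoid, leveraging the module-category classification from Section 3. Write $M_{ij}$ for the number of isomorphism classes of Morita equivalences between $\mathscr{H}_i$ and $\mathscr{H}_j$, $B_{ij}$ for the number of simple $\mathscr{H}_i$-module categories whose dual is $\mathscr{H}_j$, and $O_j = |\mathrm{Out}(\mathscr{H}_j)|$. Since the Brauer-Picard groupoid is by definition a groupoid and all three $\mathscr{H}_i$ lie in a single connected component (as already established), every $M_{ij}$ equals a common finite value $M$. The correspondence recalled in Section 2 between Morita equivalences and pairs (module category, outer automorphism of the dual) gives the identity $M_{ij} = B_{ij}\, O_j$.

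Now take $i = 2$. The classification of simple $\mathscr{H}_2$-module categories (Corollary following Lemma \ref{noalg}), combined with the identification of their duals in Lemmas \ref{lem:dualspart1} and \ref{lem:dualspart2} and Definition \ref{lem:dualspart3}, shows that $B_{2j} = 1$ for each $j \in \{1,2,3\}$. Substituting into the counting identity yields $M = O_j$ for every $j$, so the three outer automorphism groups have the same finite order.

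To promote this equality of orders to an honest group isomorphism, fix any Morita equivalence $\mathscr{N}$ from $\mathscr{H}_i$ to $\mathscr{H}_j$ and consider the conjugation map $\mathscr{A} \mapsto \mathscr{N}^{-1} \otimes \mathscr{A} \otimes \mathscr{N}$, which is a group isomorphism of vertex groups $\mathrm{BrPic}(\mathscr{H}_i) \cong \mathrm{BrPic}(\mathscr{H}_j)$ of the Brauer-Picard groupoid. The subgroup $\mathrm{Out}(\mathscr{H}_k) \subseteq \mathrm{BrPic}(\mathscr{H}_k)$ admits the intrinsic characterization as those invertible $\mathscr{H}_k$-$\mathscr{H}_k$ bimodule categories whose underlying left module category is the regular one, and conjugation preserves this property: the left $\mathscr{H}_j$-module category underlying $\mathscr{N}^{-1} \otimes_{\mathscr{H}_i} \mathscr{H}_i \otimes_{\mathscr{H}_i} \mathscr{N}$ is canonically $\mathscr{H}_j$. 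Hence conjugation carries $\mathrm{Out}(\mathscr{H}_i)$ into $\mathrm{Out}(\mathscr{H}_j)$, and the equality of orders forces the restriction to be a bijection. The counting step is essentially bookkeeping; the one point requiring any genuine care is verifying the regularity characterization of $\mathrm{Out}$ is preserved by conjugation, which is straightforward from the definition.
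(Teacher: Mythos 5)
Your counting argument (the identities $M_{ij}=B_{ij}\,O_j$ and $B_{2j}=1$, hence $M=O_j$ for all $j$) is exactly the computation the paper carries out in the proof of Theorem \ref{thm:H1H3}, and the paper's own proof of this lemma then finishes in one line: since there is only one $\mathscr{H}_i$-module category with dual $\mathscr{H}_i$, \emph{every} invertible $\mathscr{H}_i$-$\mathscr{H}_i$ bimodule category has the regular module category underlying it, so $\mathrm{Out}(\mathscr{H}_i)$ is not merely a subgroup of the vertex group $\mathrm{BrPic}(\mathscr{H}_i)$ but all of it, and the vertex groups of a connected groupoid are isomorphic via conjugation. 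So your overall route is the paper's.

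The one step whose justification does not hold up is the upgrade from equal orders to a group isomorphism. The claim that conjugation by an arbitrary Morita equivalence $\mathscr{N}$ carries the subgroup of invertible bimodule categories with regular underlying module category into the corresponding subgroup on the other side is false in general, and your verification only checks it for $\mathscr{A}=\mathscr{H}_i$, the identity of the vertex group. For a general $\mathscr{A}\cong{}_F\mathscr{H}_i$ (the regular bimodule with left action twisted by a tensor automorphism $F$), the underlying left $\mathscr{H}_j$-module category of $\mathscr{N}^{-1}\otimes_{\mathscr{H}_i}\mathscr{A}\otimes_{\mathscr{H}_i}\mathscr{N}$ is $\mathscr{N}^{-1}\otimes_{\mathscr{H}_i}{}_F\mathscr{N}$, which is regular only when $F$ fixes $\mathscr{N}$ as a left module category; for fusion categories whose automorphisms permute their module categories nontrivially (pointed categories already give examples) this genuinely fails, so $\mathrm{Out}$ is \emph{not} in general stable under conjugation inside the Brauer-Picard groupoid. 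In the present situation the statement is true, but for the stronger reason your own counting already provides: $B_{jj}=M_{jj}/O_j=M/O_j=1$, so $\mathrm{Out}(\mathscr{H}_j)=\mathrm{BrPic}(\mathscr{H}_j)$ and there is no proper subgroup to preserve. Replace the ``regularity is preserved by conjugation'' paragraph with that observation and the proof is complete.
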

\begin{proof}
For each $i$ there is only one $\mathscr{H}_i$ module category whose dual is  $\mathscr{H}_i$ (see Lemmas \ref{lem:dualspart1}, \ref{lem:dualspart2}, Definition \ref{lem:dualspart3}, and Theorem \ref{thm:H1H3}).  Thus, the group of Morita autoequivalences of $\mathscr{H}_i$ is isomorphic to the group of outer automorphisms of $\mathscr{H}_i$.   But since all three fusion categories are Morita equivalent, their groups of Morita autoequivalences are all isomorphic to each other.
\end{proof}

Thus, it is enough to show that the outer automorphism group of $\mathscr{H}_1$ is trivial.  We concentrate on $\mathscr{H}_1$ because it is one of the even parts of the Haagerup subfactor (and thus can be described via Peters's construction) and because it has no inner automorphisms.  Essentially the same argument applies directly to $\mathscr{H}_2$ with a little extra care.

In general an automorphism of a fusion category does not give an automorphism of the corresponding planar algebra.  This is because the chosen algebra object is built into the planar algebra formalism.  Only automorphisms which act trivially on the algebra object correspond to automorphisms of the planar algebra.  That is, we must have that $\mathscr{F}(A) \cong A$ and furthermore that the multiplication map is also acted on trivially.  Explicitly this means that the composition $$A \otimes A = \mathscr{A} \otimes \mathscr{A} \rightarrow \mathscr{F}(A \otimes A) \rightarrow \mathscr{F}(A) = A$$ should agree with multiplication, where the first map is part of the data of a tensor functor and the second map is the functor applied to the multiplication morphism.

\begin{lemma}
Any automorphism of $\mathscr{H}_1$ is naturally isomorphic to an automorphism which acts trivially on the algebra object $1+\eta$.
\end{lemma}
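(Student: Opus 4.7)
The plan is to exploit the uniqueness of the Haagerup subfactor to move any automorphism $\mathscr{F}$ within its monoidal natural isomorphism class to a representative that fixes the algebra $A := 1+\eta$ on the nose.

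First I would note that $\mathscr{F}$ preserves Frobenius--Perron dimensions, and the four simples $1,\mu,\eta,\nu$ of $\mathscr{H}_1$ have pairwise distinct dimensions $1,\frac{1+\sqrt{13}}{2},\frac{3+\sqrt{13}}{2},\frac{5+\sqrt{13}}{2}$. Hence $\mathscr{F}$ fixes every simple up to isomorphism, and picking such isomorphisms yields a monoidal natural isomorphism from $\mathscr{F}$ to an automorphism that acts as the identity on every object. I replace $\mathscr{F}$ by this representative, so that in particular $\mathscr{F}(A)=A$, and I define a new multiplication $m' := \mathscr{F}(m)\circ \sigma_{A,A}^{-1}$ on $A$, where $m$ is the original algebra structure and $\sigma$ is the tensorator of $\mathscr{F}$. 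Since $\mathscr{F}$ is a tensor autofunctor, $(A,m')$ is again a simple $Q$-system, and the corresponding subfactor shares its standard invariant with the original Haagerup subfactor (the data is transported through $\mathscr{F}$).

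By the uniqueness of the Haagerup subfactor up to isomorphism, there exists an algebra isomorphism $\phi\colon (A,m)\to (A,m')$. Because $A=1\oplus \eta$ and the two simple summands are non-isomorphic, $\mathrm{End}_{\mathscr{H}_1}(A)\cong \mathbb{C}\oplus \mathbb{C}$, so $\phi$ is just a pair of nonzero scalars, one on $1$ and one on $\eta$. I would then construct a monoidal natural isomorphism $\alpha\colon \mathscr{F}\Rightarrow \mathscr{F}'$ by declaring $\alpha_1$ and $\alpha_\eta$ to be the scalars making up $\phi^{-1}$, setting $\alpha_\nu=\mathrm{id}_\nu$ and $\alpha_\mu=\mathrm{id}_\mu$, extending additively, and transporting the tensorator of $\mathscr{F}$ by conjugation to obtain the tensorator $\sigma'$ of the new functor $\mathscr{F}'$. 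The direct computation
\[
\mathscr{F}'(m)\circ (\sigma')^{-1}_{A,A} \;=\; \alpha_A \circ \mathscr{F}(m)\circ \sigma_{A,A}^{-1}\circ (\alpha_A\otimes \alpha_A)^{-1} \;=\; \phi^{-1}\circ m' \circ (\phi\otimes \phi) \;=\; m
\]
uses precisely the fact that $\phi$ is an algebra homomorphism $(A,m)\to (A,m')$, and shows that $\mathscr{F}'$ acts trivially on $(A,m)$ in the explicit sense spelled out in the paragraph before the lemma.

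The substantive input is the uniqueness of the Haagerup $Q$-system in $\mathscr{H}_1$, imported from the known uniqueness of the Haagerup subfactor; this is the main obstacle, and without it the argument would break. Everything else is bookkeeping with tensorators. One minor subtlety to verify is that rescaling on the simples $1$ and $\eta$ genuinely extends to a monoidal natural isomorphism of tensor autofunctors, but this is automatic since any natural isomorphism of the underlying functor transports the monoidal structure to a unique compatible one on the target.
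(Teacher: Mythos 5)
Your overall architecture matches the paper's: use the fact that no other simple of $\mathscr{H}_1$ shares a dimension with $1$ or $\eta$ to see that $\mathscr{F}$ preserves $A=1+\eta$ as an object, then use uniqueness of the algebra structure on $A$ together with multiplicity-freeness of $A$ to promote an algebra isomorphism to a monoidal natural isomorphism. The bookkeeping with the tensorator at the end is fine. The problem is the justification of the key step: the existence of an algebra isomorphism $\phi\colon (A,m)\to(A,m')$ \emph{in} $\mathscr{H}_1$.

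You import this from ``uniqueness of the Haagerup subfactor up to isomorphism,'' but that theorem is a statement about subfactors up to conjugacy, equivalently about the standard invariant up to isomorphism of planar algebras. Applied to the two $Q$-systems $(A,m)$ and $(A,m')$ it only yields an autoequivalence $\Psi$ of $\mathscr{H}_1$ with $\Psi(A,m)\cong(A,m')$ as algebras --- not a morphism of $\mathscr{H}_1$ intertwining $m$ and $m'$. In your situation this is vacuous, since $(A,m')$ was \emph{defined} by transporting $(A,m)$ through the autoequivalence $\mathscr{F}$, so such a $\Psi$ exists by construction; and upgrading it to an honest algebra isomorphism would require knowing that the relevant autoequivalence is (naturally isomorphic to) the identity, which is exactly what this section is in the middle of proving. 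So as written the step is either contentless or circular. The paper closes this gap locally: by $3$-supertransitivity of the Haagerup subfactor, $\eta\otimes\eta$ contains $1$ and $\eta$ each with multiplicity one, so any unit and multiplication on $1+\eta$ must lie in the Temperley--Lieb subspaces, where uniqueness of the algebra structure up to isomorphism is a standard computation. (Alternatively one could argue via the classification of module categories and a dimension count showing $\kappa$ is the unique simple in the relevant module category with $\underline{\mathrm{Hom}}(\kappa,\kappa)\cong 1+\eta$, as the paper does for $1+\nu$; but some such intrinsic uniqueness argument is needed in place of your appeal to uniqueness of the subfactor.)
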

\begin{proof}
First note that no other simple object in $\mathscr{H}_1$ has the same dimension as $1$ or $\eta$, hence the automrophism must send $1+\eta$ to itself.  Now the $3$-supertransitivity of the Haagerup subfactor guarantees that up to algebra isomorphism, there is only one algebra structure on $1+\eta$.  Since $1+\eta$ is multiplicity free, we can extend this algebra isomorphism to a natural transformation which sends our original automorphism into an automorphism which acts trivially on $1+\eta$.
\end{proof}

\begin{lemma}
The Haagerup planar algebra (constructed in \cite{0902.1294}) has no nontrivial automorphisms.
\end{lemma}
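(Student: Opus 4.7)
The strategy is to exploit Peters's generators-and-relations description of the Haagerup planar algebra from \cite{0902.1294}. In her construction the planar algebra is generated by a single uncappable low-weight element $S$ that is also a rotational eigenvector with a specific eigenvalue, and which satisfies an explicit list of quadratic (and higher) relations.

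Any automorphism $\phi$ of a subfactor planar algebra acts as the identity on the Temperley--Lieb subalgebra (which is produced by the planar structure using tangles with no input discs) and commutes with every planar tangle. In particular $\phi$ preserves each box space $P_{n,\pm}$ together with the subspaces cut out by the capping and rotation conditions defining the generator. Peters identifies the space of uncappable low-weight rotational eigenvectors of the relevant eigenvalue in the pertinent $P_{n,\pm}$ as one-dimensional and spanned by $S$, so $\phi(S) = \lambda S$ for some nonzero scalar $\lambda \in \mathbb{C}^{\times}$.

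The next step is to plug $\phi(S) = \lambda S$ into Peters's quadratic relation for $S \cdot S$, which has the schematic form $S \cdot S = a T + c S$ with $T$ a specific Temperley--Lieb element and $a, c$ nonzero scalars. Using $\phi(T) = T$ and the multiplicativity of $\phi$ yields $\lambda^{2}(a T + c S) = a T + c \lambda S$, and linear independence of $T$ and $S$ forces both $\lambda^{2} = 1$ and $\lambda^{2} = \lambda$, hence $\lambda = 1$. Since $S$ together with the planar operations generates the planar algebra and $\phi$ is the identity on generators, $\phi$ is trivial.

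The main obstacle is verifying that Peters's relations genuinely take the needed inhomogeneous form: if every relation on $S$ were homogeneous in $S$ (say all purely quadratic) one would only be able to deduce $\lambda^{k} = 1$ for some $k$, and one would then have to match closed-diagram evaluations (traces of specific polynomial expressions in $S$) against fixed scalars to pin down $\lambda$. Thus the concrete work is to locate, in Peters's explicit list of relations, one that mixes different powers of $S$ with nonzero Temperley--Lieb coefficient, so that the scalar ambiguity collapses immediately; a minor variant of the same argument then handles $\mathscr{H}_{2}$ and, via the previous lemma, $\mathscr{H}_{3}$.
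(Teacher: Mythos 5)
Your overall skeleton matches the paper's: the generator spans a canonically characterized one-dimensional subspace (the low-weight space for annular Temperley--Lieb in the $4$-box space), so any automorphism sends it to a scalar multiple $\lambda S$; a relation pins down $\lambda$; and since the generator generates, the automorphism is trivial. But your main mechanism for pinning down $\lambda$ fails: in Peters's presentation the quadratic relation is $T^2 = \tfrac{1}{2} f^{(4)}$, which is \emph{homogeneous} --- there is no linear term in the generator --- so you only get $\lambda^2 = 1$, i.e.\ $\lambda = \pm 1$. The inhomogeneous relation $S\cdot S = aT + cS$ with $c \neq 0$ that your argument needs simply is not there, and no purely even relation can ever rule out $\lambda = -1$. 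Indeed the paper remarks that the Haagerup planar algebra \emph{does} admit an anti-linear automorphism exchanging $T$ and $-T$, so any successful argument must invoke something genuinely sensitive to the sign.

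You do correctly anticipate this obstacle and name the right fallback --- matching closed-diagram evaluations against fixed scalars --- but you leave it as a contingency rather than executing it. This fallback is exactly the paper's actual argument: the third twisted moments of $T$ and $-T$ are distinct (Peters, Lemma 4.1), which excludes $\lambda = -1$ and forces $\lambda = 1$. So the gap is concrete: your primary route is unavailable for this planar algebra, and the step that actually closes the argument (the odd-moment computation) is missing from your write-up. With that one input supplied, your proof becomes the paper's proof.
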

\begin{proof}
The subspace spanned by the generator $T$ is characterized by being the low weight space for the action of annular Temperley-Lieb on the $4$-box space.  Thus any automorphism of the planar algebra must send $T$ to a multiple of itself.  Since $T^2 = \frac{1}{2} f^{(4)}$ we see that $T$ needs to be sent to $\pm T$.  However, the third twisted moments of $T$ and $-T$ are not equal to each other \cite[Lemma 4.1]{0902.1294}, hence the automorphism must send $T$ to $T$.  Since $T$ generates the planar algebra we see that the automorphism is automatically trivial.
\end{proof}

\begin{remark}
Note that the Haagerup planar algebra does have an anti-linear automorphism interchanging $T$ and $-T$.
\end{remark}

\begin{theorem}
$\mathrm{Aut}(\mathscr{H}_1)$ is trivial.
\end{theorem}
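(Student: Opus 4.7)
The plan is to combine the two preceding lemmas with the fact that $A = 1+\eta$ tensor generates $\mathscr{H}_1$. Let $\mathscr{F}\in\mathrm{Aut}(\mathscr{H}_1)$ be arbitrary. By the first of the two preceding lemmas, we may replace $\mathscr{F}$ by a naturally isomorphic automorphism that acts as the identity on $A$ as an algebra object: $\mathscr{F}(A)=A$ as objects, and the composite
$$A\otimes A \xrightarrow{\sigma_{A,A}} \mathscr{F}(A\otimes A)\xrightarrow{\mathscr{F}(m)} \mathscr{F}(A)=A$$
agrees with the original multiplication $m$.

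Such a tensor autoequivalence induces an automorphism of the Haagerup subfactor planar algebra, whose $n$-box spaces are the morphism spaces $\mathrm{Hom}(A^{\otimes n},A^{\otimes n})$ in $\mathscr{H}_1$ and whose planar operations are built from the unit, multiplication, and rigidity of $A$. Since $\mathscr{F}$ preserves all of this structure, it descends to a genuine automorphism of the planar algebra. By the immediately preceding lemma this must be trivial, so $\mathscr{F}$ acts as the identity on every morphism space $\mathrm{Hom}(A^{\otimes n},A^{\otimes m})$.

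To finish, observe that $\eta\otimes\eta\cong 1+\nu+\eta+\mu$ in $H_4$, so every simple object of $\mathscr{H}_1$ appears as a summand of $A^{\otimes 2}$; in particular $A$ tensor generates $\mathscr{H}_1$. A tensor autoequivalence that restricts to the identity on a tensor-generating algebra object, together with all of its tensorator data, is naturally isomorphic to the identity by a standard reconstruction argument (as in the $Q$-system / Tannakian framework recalled in Section~2). Hence $\mathscr{F}\cong\mathrm{id}$, and $\mathrm{Aut}(\mathscr{H}_1)$ is trivial up to natural isomorphism. The main nontrivial input is Peters's skein-theoretic rigidity of the Haagerup planar algebra, which was already established in the preceding lemma; the only remaining subtlety is the bookkeeping around the tensorator $\sigma_{A,A}$, which is handled cleanly by the choice of representative from the first preceding lemma.
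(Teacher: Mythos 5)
Your proof is correct and follows essentially the same route as the paper: reduce to an automorphism acting trivially on the algebra object $1+\eta$, pass to the Haagerup planar algebra, and invoke Peters's rigidity result. The only difference is that you make explicit the reconstruction step (that $1+\eta$ tensor generates $\mathscr{H}_1$, so triviality on the planar algebra forces triviality of the tensor autoequivalence), which the paper leaves implicit in its one-sentence proof.
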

\begin{proof}
By the above lemmas, any automorphism of $\mathscr{H}_1$ is naturally isomorphic to one which acts trivially on the algebra object $1+\eta$ and thus corresponds to an automorphism of the Haagerup planar algebra, which must therefore be trivial.
\end{proof}

\begin{remark}
We have also checked that $\mathrm{Out}(\mathscr{H}_2)$ is trivial in a completely independent way using 6j-symbols instead of planar algebras.  In this other approach $\mathscr{H}_2$ is more convenient because its tensor product rules are multiplicity free, thereby simplifying the theory of 6j-symbols.  First, after possibly applying an inner automorphism, we may assume that the automorphism fixes the object $\xi$.  Furthermore, by looking at the connection (or equivalently the 6j-symbols) any linear automorphism which fixes $\xi$ actually fixes all the objects.  Any such automorphism is a gauge automorphism in the sense of \cite{Liptrap}.  All gauge automorphisms can be found following \cite{Liptrap} using  nothing more than highschool algebra.  However, our argument, elementary as it was, was also extremely tedious.  Since we were unable to find a good way to shorten or clarify this argument we have chosen not to inflict it on the reader.
\end{remark}

\section{The Izumi subfactors}
In \cite[Section 7]{MR1832764} Izumi listed a system of equations associated to a finite Abelian group $G$ of odd order such that any solution gives a unitary fusion category with sectors $[\alpha_{g}], [\alpha_{g} \xi], g \in G$ satisfying $[\alpha_g \xi ]= [\xi \alpha_{-g}] $ and $[\xi^2 ]=\sum_{g \in G} \limits [\alpha_g \xi ] $. He showed that $[1] + [\xi ] $ admits a Q-system.  The sector $[\xi]$ has Forbenius-Perron dimension $d=d(\xi)=\displaystyle \frac{n+\sqrt{n^2+4}}{2}$, where $n=|G|$, so the corresponding subfactor has index $1+d$. Note the relation $d^2=1+nd$. For $\mathbb{Z} / 3 \mathbb{Z}$ one recovers the Haagerup subfactor, and he showed that there is unique solution up to equivalence for $\mathbb{Z} / 5 \mathbb{Z} $.   The goal of this section is to discuss how our results generalize to other Izumi subfactors.

Izumi's equations were solved for $\mathbb{Z}/n\mathbb{Z} $ for $n=7,9$ (in the latter case there are two non-equivalent solutions) by Evans and Gannon \cite[Theorem 5]{1006.1326}, who also found numerical evidence for solutions for the cases $n=11,13,15,17,19$. They also computed the fusion rings of the dual fusion categories of the subfactors subject to conditions on the modular data \cite[Theorem 7]{1006.1326}; these conditions are satisfied for at least one solution for each $n$ for which solutions are known. 

The dual fusion ring is then commutative and is described as follows: there is the identity $1$; there is a simple object $\eta$ with $d(\eta)=d $; there are $\frac{n-1}{2} $ simple objects $\nu_j$ with $d(\nu_j)=d+1$; and there are $\frac{n-1}{2} $ simple objects $\mu_j$ with $d(\mu_j)=d-1$.

From now on we will fix $n$ and let $\mathscr{I}_1 $ and $\mathscr{I}_2$ denote, respectively, the commutative and noncommutative fusion categories of an Izumi subfactor associated to $\mathbb{Z} / n \mathbb{Z} $ and satisfying the Evans-Gannon conditions. Let $I_1 $ be the fusion ring of $\mathscr{I}_1  $ and let $I_2 $ be the fusion ring of $\mathscr{I}_2 $. Let $\alpha=\alpha_g$ be an object corresponding to a generator of the group, so that the invertible objects are given by $1,\alpha^i, 1 \leq i \leq n-1$.
 
If $\gamma$ is a simple algebra object in a fusion category, then we have $(\gamma,\rho) \leq d(\rho ) $ for every simple object $\rho $ in $\mathscr{C} $.

\begin{definition}
 A simple algebra object $\gamma$ in a fusion category $\mathscr{C}$ will be called saturated if $(\gamma,\rho )=\lfloor d(\rho) \rfloor $ for every simple object $\rho$ in $\mathscr{C}$. 
\end{definition}

If $[\gamma]$ is a saturated simple algebra object in $\mathscr{C} $, then the vertex in the principal graph corresponding to the fundamental object is connected to the vertex corresponding to $\rho$ by $\lfloor d(\rho) \rfloor$ edges, for every simple object $\rho$ in $\mathscr{C}$. Let $G_{\mathscr{C}}$ be the graph obtained by adding one more odd vertex, along with one edge from the new vertex to each non-invertible simple object in $\mathscr{C}$. (For examples, see Example \ref{ex}, (c), and Haagerup graph (6')).

\begin{lemma}
 Let $\mathscr{C}$ be a fusion category with fusion ring $I_1$ or $I_2$. Then the only possible principal graph of a saturated simple algebra is $G_{\mathscr{C}} $. In either case, the Frobenius-Perron weight of the second odd vertex is $\sqrt{n}$ and the dimension of the algebra object is $n+n^2d$.  
\end{lemma}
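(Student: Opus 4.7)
The plan is to mirror the Haagerup computation from Section 3: use the saturation condition to pin down $\gamma$ as an object, then factor the reduced fusion matrix via Lemma \ref{findgraphs}.

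First I would write $\gamma$ explicitly. Since $d$ satisfies $d^2=nd+1$ we have $n<d<n+1$, so $\lfloor d\rfloor = n$ and $\lfloor d\pm 1\rfloor = n\pm 1$. Saturation therefore forces $\gamma = \sum_i\alpha^i + n\sum_i\alpha^i\xi$ in a category with fusion ring $I_2$, and $\gamma = 1 + n\eta + (n+1)\sum_j\nu_j + (n-1)\sum_j\mu_j$ in $I_1$. A short calculation using $(n+1)(d+1)+(n-1)(d-1)=2(nd+1)$ together with $d^2=nd+1$ gives $\dim\gamma = n+n^2d = nd^2$ in both cases, establishing the dimension formula.

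Next I would compute the reduced fusion matrix $F^{\gamma,r}$. For $I_2$, using $[\alpha^i\xi\,\alpha^k]=[\alpha^{i-k}\xi]$ and $[\xi^2]=[1]+\sum_g[\alpha_g\xi]$, a direct expansion yields $\gamma\cdot\alpha^k=\gamma$ and $\gamma\cdot\alpha^k\xi = n\sum_q\alpha^q + (1+n^2)\sum_q\alpha^q\xi$. Subtracting the rank-one contribution $a_ia_j$ gives $F^{\gamma,r}_{ij}=0$ on every block involving a non-identity invertible and $F^{\gamma,r}_{ij}=1$ on the non-invertible block. For $I_1$ one performs the analogous calculation using the Evans--Gannon fusion rules for the sectors $\eta,\nu_j,\mu_j$; the result is that $F^{\gamma,r}$ equals the $n\times n$ all-ones matrix $J_n$, since $I_1$ has only one invertible object.

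The decomposition $F^{\gamma,r}=A^r(A^r)^T$ with nonnegative integer entries is then essentially unique. Rows of $A^r$ indexed by non-identity invertibles must vanish, since the corresponding diagonal entries of $F^{\gamma,r}$ are zero. The non-invertible submatrix $B$ satisfies $BB^T=J_n$, which has rank one, forcing $B$ to have rank one; the only nonnegative integer rank-one factorization of $J_n$ is a single column of ones. Hence there is precisely one additional odd vertex $\theta$, adjacent by a single edge to each non-invertible even vertex and to none of the invertibles, giving exactly the graph $G_{\mathscr{C}}$.

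Finally, the Frobenius--Perron dimension of $\theta$ follows from the eigenvalue equation $d(\kappa)d(\rho)=\lfloor d(\rho)\rfloor\,d(\kappa)+d(\theta)$ applied to any non-invertible $\rho$, where $d(\kappa)=\sqrt{\dim\gamma}=d\sqrt{n}$ (using $nd^2=n+n^2d$). Since $d(\rho)-\lfloor d(\rho)\rfloor=d-n$ holds for all three non-invertible families, this yields $d(\theta)=d\sqrt{n}(d-n)=\sqrt{n}(d^2-nd)=\sqrt{n}$. The main obstacle is the bookkeeping of Evans--Gannon fusion coefficients in the $I_1$ case needed to verify the $J_n$ structure of $F^{\gamma,r}$; the structural skeleton otherwise parallels the $I_2$ calculation and the earlier Haagerup computations.
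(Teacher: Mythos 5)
Your proof is correct and follows essentially the same route as the paper's one-line argument: pin down $\gamma$ from saturation, compute the reduced fusion matrix, observe that it is all-ones on the non-invertible block and zero on every block involving a non-identity invertible, deduce the unique nonnegative-integer factorization $A^r(A^r)^T$, and read off the Frobenius--Perron weight $\sqrt{n}$ from the eigenvalue equation using $d^2-nd=1$. One remark: your assignment of the full all-ones reduced matrix to $I_1$ and of the block structure (zeros against invertibles) to $I_2$ is the correct one, consistent with Example \ref{ex}(c) in the Haagerup case, whereas the paper's proof text appears to have the labels $I_1$ and $I_2$ transposed.
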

\begin{proof}
 For $I_2$, one can check using the Evans-Gannon fusion rules that all entries of the reduced fusion matrix of the object $\sum_{i=0}^{n} \limits  \lfloor d(\rho) \rfloor \rho_i $ are $1$; the Frobenius-Perron weights are easy to compute. For $I_1$, the proof is the same except that only the entries corresponding to two non-invertible objects are $1$. 
\end{proof}

\begin{corollary} \label{imp}
 Let $\mathscr{C}$ be a fusion category with fusion ring $I_2$. Then $\mathscr{C} $ does not have a saturated algebra object.
\end{corollary}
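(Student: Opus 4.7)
Suppose for contradiction that $\mathscr{C}$ has a saturated algebra $\gamma$. By the previous lemma the principal graph of $\gamma$ is $G_{\mathscr{C}}$, so in the associated module category $\mathscr{M} := \mathrm{Mod}\text{-}\gamma$ there is, in addition to the fundamental object $\kappa$ of Frobenius--Perron dimension $\sqrt{n}\,d$, an extra simple object $\kappa'$ of dimension $\sqrt{n}$. My plan is to identify the internal endomorphism algebra $B := \underline{\mathrm{End}}(\kappa') \in \mathscr{C}$ and then extract a contradiction from its shape.

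To identify $B$, write $\kappa' \otimes \xi_i = a_i\kappa + b_i\kappa'$ for each simple $\xi_i \in \mathscr{C}$.  Taking Frobenius--Perron dimensions gives $a_id + b_i = d(\xi_i)$; since $a_i,b_i$ are nonnegative integers and $d$ is irrational, this forces $(a_i,b_i) = (0,1)$ when $\xi_i = \alpha^g$ is invertible and $(a_i,b_i) = (1,0)$ when $\xi_i = \alpha^g\xi$ is non-invertible.  Frobenius reciprocity then gives $(B,\xi_i) = (\kappa'\otimes\xi_i,\kappa') = b_i$, so
\[
B \;=\; \bigoplus_{g\in G}\alpha^g,
\]
a dimension-$n$ algebra supported entirely on the pointed subcategory of $\mathscr{C}$.

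The pointed subcategory of $\mathscr{C}$ is a copy of $\mathrm{Vec}(G,\omega)$ for some class $[\omega]\in H^3(G,\mathbb{C}^\times)$, and $\bigoplus_g\alpha^g$ admits an algebra structure if and only if $[\omega]$ is trivial.  I would complete the proof by showing that in any fusion category with fusion ring $I_2$ the cocycle $\omega$ restricted to the pointed subcategory is cohomologically nontrivial, using the Izumi relation $[\alpha^g\xi] = [\xi\alpha^{-g}]$ together with the pentagon axiom applied to a quadruple involving several copies of $\alpha$ and one $\xi$ to force a nonzero class in $H^3(G,\mathbb{C}^\times)$.  This contradicts the existence of $B$ and hence of $\gamma$.

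The main obstacle is this final cocycle step.  The parallel calculation in the Haagerup case ($n=3$, cf.\ Lemma \ref{lem:cocycle}) produces a trivial cocycle, so the argument must exploit something genuinely present only in the richer Izumi setting --- most likely the nontrivial $G$-orbits on the noninvertibles $\alpha^g\xi$ that appear once $n \ge 5$.  If the pure cocycle computation proves stubborn, an alternative route is to bypass cohomology entirely and instead derive the contradiction by showing that no unital associative product on $B$ is compatible with the specific left $B$-action on $\kappa$ dictated by $G_{\mathscr{C}}$ together with the fusion rule $[\xi^2]=[\mathbf{1}]+\sum_g[\alpha^g\xi]$.
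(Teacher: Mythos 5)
There is a genuine gap here, and it is worth separating it from what appears to be a typo in the paper. Although the statement as printed says ``fusion ring $I_2$,'' the paper's own one-line proof decomposes $[\kappa'\bar{\kappa'}]$ into the objects $[\nu_j],[\eta],[\mu_j]$, which live in $I_1$, and the corollary is invoked in the proof of Theorem \ref{I3} precisely to rule out a saturated algebra object in $\mathscr{I}_1$. The intended hypothesis is $I_1$, and there the argument is a short dimension count: the second odd vertex $\kappa'$ has Frobenius--Perron dimension $\sqrt{n}$, so $[\kappa'\bar{\kappa'}]=[1]+[\sigma]$ with $d(\sigma)=n-1$ a positive integer, while every nontrivial simple object of $I_1$ has irrational dimension ($d-1$, $d$, or $d+1$), so no such $\sigma$ exists. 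For $I_2$ the claim is in fact \emph{false}: the graph (6') is exactly $G_{\mathscr{H}_2}$, and it is realized as the principal graph of the $\mathscr{H}_2$--$\mathscr{H}_3$ subfactor of index $\frac{33+9\sqrt{13}}{2}=n+n^2d$ (and likewise (6') in the $n=5$ list), so $\mathscr{I}_2$ \emph{does} admit a saturated algebra object.

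Your proposal takes the printed hypothesis $I_2$ at face value, and that is where it breaks. The first half is correct and nicely done: the extra odd vertex $\kappa'$ has dimension $\sqrt{n}$, and your fusion computation showing $\underline{\mathrm{End}}(\kappa')\cong\bigoplus_{g\in G}\alpha^g$ is right. But the contradiction you then hope to extract --- that $\bigoplus_g\alpha^g$ carries no algebra structure because the associator on the pointed subcategory is cohomologically nontrivial --- cannot be established, because it is not true. You already observed that Lemma \ref{lem:cocycle} forces the cocycle to be trivial for $n=3$; the same argument works for every $n$, since the middle vertex $\lambda$ of the dual graph satisfies $[\alpha^k\lambda]=[\lambda]$, so the pointed part acts on $\mathrm{Vec}$ and has trivial associator. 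Indeed the algebra $1+\alpha+\cdots+\alpha^{n-1}$ is exactly what the paper uses to \emph{construct} $\mathscr{H}_3$ and $\mathscr{I}_3$, so your fallback route (no compatible product on $B$) fails for the same reason. The repair is not to push harder on the cocycle but to prove the statement where it is both true and actually needed, namely for $I_1$, where the dimension count above finishes the job and the internal-endomorphism machinery is not required.
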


\begin{proof}
 As in Example \ref{ex}, (c), let $\kappa'$ be the object corresponding to the second odd vertex. Then $[\kappa' \bar{\kappa'}]=[1]+[ \sigma] $, where $[\sigma] $ is a nonnegative integral linear combination of $[\nu_j], [\eta], [\mu_j] $. Since $d(\sigma )=n-1$, this is impossible.
\end{proof}

% \begin{lemma}
%  Let $[\gamma] $ be a saturated Frobenius algebra in a unitary fusion category $\mathscr{C}$ with fusion ring $I^1_{2n}$. Then the dual graph is also $G_{\mathscr{C}} $. 
% \end{lemma}
% \begin{proof}
%  Let $\kappa $ and $\kappa' $ be objects labeling the two odd vertices in the principal graph (with $\kappa $ the fundamental object), and let $\alpha^k$, $\alpha^k \xi$, $0 \leq k \leq n-1$ label the even vertices. Then $(\bar{\kappa' }\kappa', \bar{\kappa' }\kappa)=(\kappa' \bar{\kappa' }, \kappa' \bar{\kappa' } )=n $; since $d(\bar{\kappa' }\kappa' )=n $, this implies that the dual graph has $n$ vertices with Perron Frobenius weight $1$. Moreover, we have $(\bar{\kappa}'\xi \kappa, \bar{\kappa}'\xi {\kappa})=(\kappa'\bar{\kappa} \xi, \xi\kappa'\bar{\kappa})=n  $, $(\bar{\kappa'}\xi \kappa', \bar{\kappa}{\kappa})=(\kappa \bar{\kappa'} \xi, \kappa \bar{ \kappa'} )=n^2 $, and $(\bar{\kappa'}\xi \kappa', \bar{\kappa'}{\kappa})=(\kappa' \bar{\kappa'} \xi, \kappa \bar{ \kappa}' )=n $.  
% \end{proof}

\begin{theorem} \label{I3}
There exists a unitary fusion category $\mathscr{I}_3 $ which is Morita equivalent to $\mathscr{I}_1$ or $\mathscr{I}_2$ but not isomorphic to either of them.
\end{theorem}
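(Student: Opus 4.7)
The plan is to follow the Haagerup template of Lemmas \ref{existence} and \ref{fring}. Let $N \subset M$ be an Izumi subfactor with even parts $\mathscr{I}_1$ and $\mathscr{I}_2$. The $n$ invertible objects $\{1,\alpha,\ldots,\alpha^{n-1}\}$ of $\mathscr{I}_2$ all appear at depth two of the dual principal graph, forming a subgroup $\mathbb{Z}/n\mathbb{Z}$ of the Galois group of $M/N$. By subfactor Galois theory there is an intermediate subfactor $N \subset P \subset M$ with $[M:P]=n$. I would define $\mathscr{I}_3$ to be the dual even part of $N \subset P$, equivalently the category of $A$-$A$ bimodule objects in $\mathscr{I}_2$ for the algebra object $A=\bigoplus_{i}\alpha^{i}$. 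By construction $\mathscr{I}_3$ is a unitary fusion category, Morita equivalent to $\mathscr{I}_1$ (through $N\subset P$) and hence also to $\mathscr{I}_2$.

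To show $\mathscr{I}_3 \not\cong \mathscr{I}_1$, I would compute the fusion ring of $\mathscr{I}_3$ and verify that it is the noncommutative Evans--Gannon ring $I_2$ rather than the commutative ring $I_1$. Following the argument of Lemma \ref{fring}, one reads off the $n$ invertibles $\{\beta^i\}$ and the $n$ objects $\{\beta^i \sigma\}$ of dimension $d$ from the dual principal graph of $N \subset P$, then uses Frobenius reciprocity together with the annular-tangle self-duality argument of \cite{1007.1730, MR1929335} to establish the twisted commutation $[\beta\sigma]=[\sigma\beta^{-1}]$. This pins down the fusion ring as $I_2$.

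To show $\mathscr{I}_3 \not\cong \mathscr{I}_2$, I would adapt Lemma \ref{noalg}. The dual principal graph of $N \subset P$ gives an explicit simple algebra in $\mathscr{I}_3$ on an object of the form $1+\sigma+\beta\sigma$. A hypothetical isomorphism $\mathscr{I}_3 \cong \mathscr{I}_2$ would translate this into an algebra on $1+\xi+\alpha\xi$ in $\mathscr{I}_2$. Combining such an algebra with the algebra on $1+\alpha^{-1}\xi$ coming from the Izumi subfactor via the bimodule construction in the proof of Lemma \ref{noalg}, one obtains a simple algebra object in $\mathscr{I}_1$ of dimension $(1+2d)(1+d)=3+(2n+3)d$. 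The contradiction is then obtained by showing that no such algebra is admissible in $\mathscr{I}_1$, in analogy with the role of Lemma \ref{lem:H6} in the Haagerup case.

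The main obstacle is this last step. In the Haagerup case $n=3$, the complete classification of admissible principal graphs for simple algebras in $\mathscr{H}_1$ (the $H_4=I_1$ analog of Lemma \ref{lem:H6}) made the nonexistence check immediate. For general $n$, producing such a classification in $\mathscr{I}_1$ requires combining Lemma \ref{lem:algebra-bound} with the Evans--Gannon fusion rules and enumerating candidate reduced fusion matrices, which may have to be done case-by-case for each $n$ where an Izumi solution is known. Alternatively, since Corollary \ref{imp} (as its proof actually shows) rules out saturated algebras in any fusion category with fusion ring $I_1$, one could instead try to show that the algebra forced by the hypothetical isomorphism is itself saturated in $\mathscr{I}_1$, which would give an immediate contradiction without case analysis.
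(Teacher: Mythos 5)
Your overall strategy (construct $\mathscr{I}_3$ as the dual of an index-$n$ intermediate, then rule out isomorphism with $\mathscr{I}_1$ and $\mathscr{I}_2$ separately) is the paper's, but two steps go wrong. First, the construction: the Izumi subfactor $N\subset M$ itself has trivial Galois group and no intermediate subfactors --- the invertible objects $\alpha^i$ of $\mathscr{I}_2$ sit at depth four of its dual principal graph, not at depth two, and an intermediate of index $n$ inside a subfactor of index $1+d<n+1$ is numerically impossible. The paper instead starts from the \emph{large} $\mathscr{I}_1$-$\mathscr{I}_2$ subfactor built from the middle vertex $\lambda$ of the dual graph, with Q-system $\gamma=\lambda\bar\lambda$ of dimension $\frac{(nd)^2}{d+1}$; since $(\gamma,\alpha^k)=(\lambda,\alpha^k\lambda)=1$ for all $k$, that subfactor does contain $\bigoplus_k\alpha^k$ and hence admits an intermediate $P$ with $[M:P]=n$, and $\mathscr{I}_3$ is the $P$-$P$ even part of $N\subset P$. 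Your alternative description of $\mathscr{I}_3$ as $A$-$A$ bimodules for $A=\bigoplus_i\alpha^i$ is the right category, but taken on its own it requires separately proving that $A$ carries a Q-system (i.e.\ that the pointed subcategory has trivial associator, the analogue of Lemma \ref{lem:cocycle}), which the intermediate-subfactor construction gives you for free. Second, for $\mathscr{I}_3\not\cong\mathscr{I}_1$ you do not need to compute the whole fusion ring of $\mathscr{I}_3$ --- which you cannot do for general $n$, since even the principal graphs of the $\mathscr{I}_1$-$\mathscr{I}_3$ subfactor are only conjectural in the paper; it is enough that $\mathscr{I}_3$ contains a nontrivial invertible object while $I_1$ has none.

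The more serious gap is in the step $\mathscr{I}_3\not\cong\mathscr{I}_2$. The algebra in $\mathscr{I}_3$ handed to you by the construction is the dual Q-system of $N\subset P$, which has dimension $[P:N]=\frac{nd^2}{d+1}=1+(n-1)d$; under a hypothetical isomorphism with $\mathscr{I}_2$ it becomes an algebra on $1+\sum_{k=0}^{n-2}\alpha^{k}\xi$, not on $1+\xi+\alpha\xi$. Running the Lemma \ref{noalg} argument with the Izumi Q-system $1+\xi$ then produces an $\mathscr{I}_1$-$*$ subfactor of index $(1+d)\bigl(1+(n-1)d\bigr)=n+n^2d$, not $(1+2d)(1+d)=3+(2n+3)d$ (these agree only at $n=3$). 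This correction is exactly what makes your own suggested endgame work: $n+n^2d$ is precisely the dimension of the saturated object $1+\sum_i\lfloor d(\rho_i)\rfloor\rho_i$ in $I_1$, so by the multiplicity bound of Lemma \ref{lem:algebra-bound} the resulting algebra would be forced to be saturated, contradicting Corollary \ref{imp} (whose proof, as you correctly observe, is really a statement about $I_1$). With your dimension $3+(2n+3)d$ the algebra is far from saturated for $n\geq 5$, the saturation shortcut fails, and you would be thrown back on the case-by-case enumeration you were hoping to avoid.
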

\begin{proof}
 Let $\lambda$ denote the simple object corresponding to the middle vertex of the dual graph of the Izumi subfactor. Let $\gamma=\lambda \bar{\lambda}$ be the correponding algebra object, which is also a Q-system. Then $\gamma$ is the $M-M$ Q-system for an $\mathscr{I}_1-\mathscr{I}_2$ subfactor $N \subset M$ with index $d(\gamma)=\frac{(nd)^2}{d+1}$. For any $0 \leq k \leq n$, $(\gamma, \alpha^k)=(\lambda, \alpha^k \lambda )=(\lambda, \lambda)=1 $, so there is an intermediate $\mathscr{I}_1-*$ subfactor $N \subset P \subset M$ with index $[P:N]=d(\gamma )/n = \frac{nd^2}{d+1}=1+(n-1)d$. Let $\mathscr{I}_3$ be the $P-P$ even part of $N \subset P$.

Then $\mathscr{I}_3 $ contains a nontrivial invertible object, so $\mathscr{I}_3 \ncong \mathscr{I}_1$. Suppose $\mathscr{I}_3 \cong \mathscr{I}_2$. Then as in the proof of Lemma \ref{noalg}, there would have to exist an $\mathscr{I}_1-* $ subfactor $R \subset S$ of index $(1+d)(1+(n-1)d )=n(1+nd)$. But this would imply the existence of a saturated algebra object in $\mathscr{I}_1$, which is impossible by Lemma \ref{imp}. 
\end{proof}

As we have seen, in the case $n=3$, $\mathscr{I}_3$ has fusion ring $I_2$ and there is a unique simple bimodule category between each pair $\mathscr{I}_i, \mathscr{I}_j, 1 \leq i,j \leq 3 $ up to isomorphism. It is natural to wonder whether this holds true for other values of $n$. The first question that needs to be resolved is finding the principal graphs of the $\mathscr{I}_1-\mathscr{I}_3$ subfactor constructed in Theorem \ref{I3}. There are natural candidates for these graphs based on the $n=3$ and $n=5$ (below) cases, but we cannot verify the following conjecture for arbitrary $n$ at this time.

\begin{conjecture}
 Let $N \subset M$ be the  $\mathscr{I}_1-\mathscr{I}_3$ subfactor constructed above, and let $\gamma$ be the corresponding algebra object in $\mathscr{I}_1 $. Then $\gamma \cong 1 + \sum_{j=1}^{\frac{n-1}{2}} \limits (\nu_j + \mu_j) $.

% Then $\mathscr{I}_3$ has fusion ring $I^1_{2n}$. 
% 
% The dual graph of $N \subset M$ is as follows: the even vertices are labeled by the sectors of $\mathscr{I}_2$ (since the fusion rings of $\mathscr{I}_2 $ and $\mathscr{I}_3 $ are the same). There are $n+1$ odd vertices, labeled $[\alpha^i \kappa] $, $0 \leq i \leq n-1$, and $[\kappa']$.  There is one edge connecting $[\alpha^i \kappa]$ to $[\alpha^i] $ for each $0 \leq i \leq n-1$; there is one edge connecting $[\alpha^i \kappa] $ to $[\alpha^j \xi ] $ for each  $0 \leq i,j \leq n-1, i \neq j$; and there is one edge connecting $[\kappa'] $ to $[\alpha^i \xi] $ for each  $0 \leq i \leq n-1$.  
% 
% The the principal graph of $N \subset M$ is as follows: the even vertices are labeled by the sectors of $\mathscr{I}_1 $. There is one edge each from $[\kappa] $ to $[1], [\nu_j], 1 \leq j \leq \frac{n-1}{2}, [\mu_j], 1 \leq j \leq \frac{n-1}{2} $; there is one edge each from $[\alpha^i \kappa]$ to $[\eta], [\nu_j], 1 \leq j \leq \frac{n-1}{2} ,[\mu_j], 1 \leq j \leq \frac{n-1}{2}, j \neq min(i,n-i)$; and there is one edge each from $[\kappa']$ to $[\eta], [\nu_j], 1 \leq j \leq \frac{n-1}{2}, [\mu_j] $, $1 \leq j \leq \frac{n-1}{2} $. 
\end{conjecture}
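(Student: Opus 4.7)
The idea is to exploit the intermediate-subfactor structure of Theorem \ref{I3}. The $\mathscr{I}_1$--$\mathscr{I}_3$ subfactor $N \subset M$ sits inside an ambient $\mathscr{I}_1$--$\mathscr{I}_2$ subfactor $N \subset M \subset \widetilde M$ whose $Q$-system is $\tilde\gamma = \lambda\bar\lambda \in \mathscr{I}_1$. Writing $\iota = {}_MM_N$ and $\kappa = {}_{\widetilde M}\widetilde M_M$, the $Q$-system we wish to determine is $\gamma = \bar\iota\iota \in \mathscr{I}_1$, and $\tilde\gamma = \bar\iota\bar\kappa\kappa\iota$.

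The key decomposition comes from the index-$n$ inclusion $M \subset \widetilde M$, which is a cyclic group subfactor: if $\beta \in \mathscr{I}_3$ generates the $\mathbb{Z}/n\mathbb{Z}$ Galois group, then $\bar\kappa\kappa \cong \bigoplus_{i=0}^{n-1}\beta^i$, and hence
\[
\tilde\gamma \;\cong\; \bigoplus_{i=0}^{n-1} \bar\iota\beta^i\iota \;\in\; \mathscr{I}_1,
\]
where each summand has Frobenius--Perron dimension $1+(n-1)d$. Simplicity of the algebra $\tilde\gamma$ forces the unit of $\mathscr{I}_1$ to appear in exactly one summand, which must then be $\gamma = \bar\iota\iota$ (the $i=0$ term).

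Next I would compute $\tilde\gamma = \lambda\bar\lambda$ directly in the Grothendieck ring of $\mathscr{I}_1$ from the Evans--Gannon fusion rules, writing it as $\tilde\gamma = n\cdot 1 + A\eta + \sum_j(B_j\nu_j + C_j\mu_j)$ with explicit non-negative integers $A, B_j, C_j$. Writing the unknown $\gamma = 1 + a\eta + \sum_j(b_j\nu_j + c_j\mu_j)$, the dimension equation $d(\gamma) = 1+(n-1)d$ combined with the $\mathbb{Q}$-linear independence of $1$ and $d$ yields
\[
a + 2\sum_j b_j = n-1, \qquad \sum_j b_j = \sum_j c_j.
\]
The additional constraint that $\tilde\gamma - \gamma$ splits into $n-1$ further summands of dimension $1+(n-1)d$ each, none containing the identity, combined with Lemma \ref{lem:algebra-bound}, should then isolate $a = 0$ and $b_j = c_j = 1$ for every $j$, giving $\gamma \cong 1 + \sum_{j=1}^{(n-1)/2}(\nu_j + \mu_j)$.

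The main obstacle is the explicit computation of $\tilde\gamma = \lambda\bar\lambda$ for arbitrary $n$: the middle vertex $\lambda$ is characterised only loosely by its position on the dual graph of the Izumi subfactor, and the Evans--Gannon fusion coefficients of $\mathscr{I}_1$ are available in closed form only for $n = 3$ (where the conjecture reduces to Lemma \ref{existence}) and numerically for $n \le 19$. A uniform proof for general $n$ would most likely require direct manipulation of Izumi's defining equations rather than case-by-case computation; the $n = 5$ case, where the Izumi subfactor is known to be unique, is the natural first test beyond the Haagerup case.
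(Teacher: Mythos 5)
First, a point of order: this statement is labelled a \emph{conjecture} in the paper, and the authors state explicitly just before it that they ``cannot verify the following conjecture for arbitrary $n$ at this time.'' The only cases actually established are $n=3$ (Lemma \ref{existence}) and $n=5$, and in both the identification of $\gamma$ proceeds by a finite enumeration of admissible principal graphs at the relevant index via the reduced-fusion-matrix criterion of Lemma \ref{findgraphs}. So there is no proof in the paper to compare yours against, and your proposal --- which you candidly flag as incomplete --- does not close the gap either. Your opening reduction is sound and matches the construction in Theorem \ref{I3}: the index-$n$ intermediate inclusion is a $\mathbb{Z}/n\mathbb{Z}$ subfactor, the big $Q$-system decomposes as $\bigoplus_{i}\bar\iota\beta^i\iota$ into $n$ pieces of equal dimension $1+(n-1)d$, and $\gamma=\bar\iota\iota$ is the unique summand containing the unit. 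The dimension equations $a+2\sum_j b_j=n-1$ and $\sum_j b_j=\sum_j c_j$ are also correct.

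The gaps are in the second half. (i) The expression $\tilde\gamma=n\cdot 1+A\eta+\cdots$ is wrong: $\tilde\gamma$ is the $Q$-system of an irreducible subfactor ($\tilde\gamma\cong\bar\lambda\lambda$ with $\lambda$ simple), so the unit occurs with multiplicity exactly $1$, not $n$ --- which is in fact what you used one paragraph earlier to single out the $i=0$ summand. (ii) The assertion that the constraints ``should then isolate $a=0$ and $b_j=c_j=1$'' is not an argument. The dimension equations together with Lemma \ref{lem:algebra-bound} admit many other solutions, e.g.\ $a=n-1$ with all $b_j=c_j=0$ (note $n-1\le\lfloor d\rfloor=n$, so the bound does not exclude it), or $a=0$ with a non-uniform distribution of the $b_j$ among the $\nu_j$; nothing written rules these out. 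Excluding them requires testing positive semi-definiteness of the actual reduced fusion matrix as in Lemma \ref{findgraphs}, which in turn requires the explicit $I_1$ fusion coefficients and the decomposition of $\lambda\bar\lambda$ --- data that, as you note yourself, are not available in closed form for general $n$. That computation is precisely the open content of the conjecture; deferring it means the statement remains unproved, exactly as the authors left it.
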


\subsection{The Izumi subfactor for $\mathbb{Z} / 5 \mathbb{Z}$}

There is a unique Izumi subfactor corresponding to $n=5$. We consider a list of graphs analogous to that obtained in the Haagerup ($n=3$) case. As before, the unprimed series are consistent with the $I_1$ fusion rules and the primed series is consistent with the $I_2 $ fusion rules.

(1) \hpic{i14} {1.2in} , $\displaystyle \frac{7+\sqrt{29}} {2}$, (2) \hpic{i13} {1.2in} , $55+10\sqrt{29}$ \\

(3) \hpic{i9} {1.2in} , $11+2\sqrt{29}$, (4) \hpic{i10} {1.4in} , $\displaystyle \frac{27+5\sqrt{29}} {2}$ \\

(5) \hpic{i12} {1.2in} , $\displaystyle \frac{35+5\sqrt{29}} {2}$, (6) \hpic{i11} {1.2in} , $\displaystyle \frac{39+7\sqrt{29}} {2}$ \\

(7) \hpic{i8} {1.2in} , $\displaystyle \frac{19+3\sqrt{29}} {2}$ \\

(1') \hpic{i4} {1.2in} , $\displaystyle \frac{7+\sqrt{29}} {2}$, (2') \hpic{i6} {1.2in} , $55+10\sqrt{29}$ \\

(3') \hpic{i5} {1.2in} , $11+2\sqrt{29}$, (4') \hpic{i1} {1.2in} , $\displaystyle \frac{27+5\sqrt{29}} {2}$ \\

(5') \hpic{i7} {1.2in} , $\displaystyle \frac{35+5\sqrt{29}} {2}$, (6') \hpic{i2} {1.2in} , $\displaystyle \frac{135+25\sqrt{29}} {2}$ \\

(7') \hpic{i3} {0.8in} , $5$ \\

\begin{lemma}\label{i10graphs}
 Let $[\gamma] $ be a nontrivial simple algebra object in a fusion category with fusion ring isomorphic to $I_2$. Then the principal graph of $[\gamma]$ is one of the seven graphs (1')-(7') listed above.
\end{lemma}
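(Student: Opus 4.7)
The plan is to mimic the proof of Lemma \ref{lem:H6} (the Haagerup case $n=3$) in the $n=5$ setting. The three main ingredients are Lemma \ref{lem:algebra-bound} (which bounds multiplicities in $\gamma$ by Frobenius--Perron dimensions), Lemma \ref{findgraphs} (which tells us that the reduced fusion matrix must factor as $A^r(A^r)^T$ with non-negative integer entries), and the cyclic $\mathbb{Z}/5\mathbb{Z}$ group of inner automorphisms coming from the invertible objects $1,\alpha,\dots,\alpha^4$, which we use to reduce the number of candidates.

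First I would enumerate candidates. Writing $\gamma = 1 + \sum_{k=1}^{4} a_k \alpha^k + \sum_{k=0}^{4} b_k \alpha^k\xi$, Lemma \ref{lem:algebra-bound} forces $a_k \in \{0,1\}$ and $b_k \in \{0,1,2,3,4,5\}$ (since $d=\tfrac{5+\sqrt{29}}{2} \approx 5.19$). Using the $\mathbb{Z}/5\mathbb{Z}$ action which cyclically permutes $\{\alpha^k\xi\}_{k=0}^{4}$ and the $\alpha^k$'s, I would fix a representative in each orbit, bringing the enumeration down to a manageable list. For each surviving candidate I would compute the reduced fusion matrix $F^{\gamma,r}$ from the $I_2$ multiplication table (the natural $n=5$ analogue of the $H_6$ table), test positive semidefiniteness, and then search for all non-negative integer decompositions $F^{\gamma,r} = A^r(A^r)^T$.

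Each such decomposition yields a candidate principal graph after reattaching the row/column for the unit. I would then eliminate infeasible graphs by three kinds of arguments: (a) Frobenius--Perron weights at odd vertices that violate the Jones index restriction (as in Example \ref{ex}(c), any odd vertex whose weight squared cannot be written as $1+\dim(\sigma)^2$ for a valid combination of simples in $\mathscr{I}_2$ must be ruled out); (b) the saturation exclusion from Corollary \ref{imp}, which rules out the would-be graph $G_{\mathscr{I}_2}$ on $[1] + \sum_k \lfloor d \rfloor\, \alpha^k\xi$ (plus the invertibles with multiplicity $1$); (c) graphs with an intermediate vertex whose induced sector $\kappa'\bar\kappa'$ has a non-integral or dimension-forbidden complement in $\mathscr{I}_2$. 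What remains should be exactly the seven graphs (1')--(7').

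The main obstacle is sheer combinatorial volume rather than conceptual difficulty: even after quotienting by the $\mathbb{Z}/5\mathbb{Z}$ symmetry, the number of candidate $\gamma$'s is substantially larger than in the $H_6$ case (where the analogous argument was already described as ``tedious''), and each reduced fusion matrix is a $9\times 9$ matrix whose factorization search has many branches. I expect no new ideas are needed beyond those used for $\mathscr{H}_2$, but a careful bookkeeping (or a short computer check) is essential to be confident that no candidate graph has been overlooked and that every survivor matches one of (1')--(7').
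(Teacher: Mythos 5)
Your overall strategy is the same as the paper's: the printed proof of this lemma is a one-paragraph summary of exactly the computation you describe, recording that besides (1')--(7') there are six other irreducible graphs compatible with the $I_2$ fusion rules (two degenerations of (2') and (5'), and two pairs at the indices $\frac{85+15\sqrt{29}}{2}$ and $30+5\sqrt{29}$), each eliminated because the square of the Frobenius--Perron weight of some odd vertex is an index admitting no $I_2$-compatible graph --- i.e.\ your criteria (a) and (c), as in Example \ref{ex}(c).

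However, your elimination criterion (b) is wrong and would discard graph (6'), which must remain on the list. The object $1+\sum_k\alpha^k+5\sum_k\alpha^k\xi$ that you propose to exclude has dimension $n+n^2d=\frac{135+25\sqrt{29}}{2}$, and its principal graph $G_{\mathscr{I}_2}$ is precisely (6'); the corresponding algebra genuinely exists --- it is the $Q$-system of the $\mathscr{I}_2$--$\mathscr{I}_3$ subfactor appearing in the final theorem, just as the Haagerup graph (6') is realized by the index-$\frac{33+9\sqrt{13}}{2}$ subfactor of Theorem \ref{subthm}(c). Corollary \ref{imp} as printed says ``$I_2$,'' but this is a typo for $I_1$: its proof expands $[\kappa'\bar{\kappa'}]=[1]+[\sigma]$ with $d(\sigma)=n-1$ in terms of the $I_1$ simples $\nu_j,\eta,\mu_j$, all of whose dimensions exceed $n-1$, and the corollary is applied to $\mathscr{I}_1$ (not $\mathscr{I}_2$) in the proof of Theorem \ref{I3}. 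In a category with fusion ring $I_2$ the obstruction evaporates, because $\sigma=\alpha+\alpha^2+\cdots+\alpha^{n-1}$ realizes dimension exactly $n-1$ using the invertible objects; the second odd vertex of $G_{\mathscr{I}_2}$ has internal endomorphisms $1+\alpha+\cdots+\alpha^{n-1}$, the group algebra of $\mathbb{Z}/n\mathbb{Z}$. So the saturation exclusion may only be invoked inside $I_1$. Dropping criterion (b) (and fixing the minor slip that the odd-vertex test compares $d(\kappa')^2$ with $1+\dim(\sigma)$, not $1+\dim(\sigma)^2$), the rest of your argument goes through exactly as in Lemma \ref{lem:H6}.
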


\begin{proof}
 Besides (1')-(7'), there are six other irreducible graphs that are compatible with the $I_2$ fusion rules. These are: the graphs obtained from (2') and (5') by in each case replacing four of the five symmetric odd vertices by a single odd vertex with two edges to each of the five even vertices; and two pairs of graphs each at the indices $\displaystyle \frac{85+15\sqrt{29}} {2}$ and $30+5\sqrt{29}$.  In each case, the square of the Frobenius-Perron dimension of one of the other odd vertices gives an index which does not admit a graph consistent with $I_2$.
\end{proof}

The proofs of the following results are now essentially the same as in the $n=3$ case.

\begin{lemma}
 There exist unique algebra structures in $\mathscr{I}_2 $ on $1$, $1+\xi$, $1+\alpha+\alpha^2+\alpha^3+\alpha^4$; there does not exist an algebra structure on $1+\alpha^k \xi+\alpha^{k+1} \xi+\alpha^{k+2} \xi + \alpha^{k+3} \xi$ for any $k$.
\end{lemma}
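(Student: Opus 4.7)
The plan is to treat the four claims in turn, following the template of the analogous lemmas in the $n=3$ case. The algebra on the unit object $1$ is immediate. For $1+\xi$, existence comes from the Izumi Q-system of the unique $\mathbb{Z}/5\mathbb{Z}$ subfactor, and uniqueness follows by the same supertransitivity argument as in the Haagerup case: the multiplicities of $1$ and of $\xi$ inside $\xi \otimes \xi$ are each equal to $1$, which forces the multiplication morphism into a small Temperley--Lieb-type space that is pinned down by associativity together with the uniqueness of the Izumi subfactor. For $1+\alpha+\alpha^2+\alpha^3+\alpha^4$, the strategy is to show that the fusion subcategory $\mathscr{D}$ generated by $\alpha$ is equivalent to $\mathrm{Vec}(\mathbb{Z}/5\mathbb{Z})$ with trivial associator. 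The argument of Lemma~\ref{lem:cocycle} generalizes: taking $\lambda$ to be the simple object at the middle vertex of the dual graph of the Izumi subfactor gives $(\alpha\lambda,\lambda) = (\alpha,\lambda\bar\lambda) = 1$, which together with $\dim(\alpha\lambda)=\dim\lambda$ forces $\alpha\lambda\cong\lambda$. Hence the category of direct sums of $\lambda$ is a module category over $\mathscr{D}$, forcing the associator class in $H^3(\mathbb{Z}/5\mathbb{Z},\mathbb{C}^*)$ to be trivial. Existence and uniqueness then reduce to the group algebra in $\mathrm{Vec}(\mathbb{Z}/5\mathbb{Z})$, where uniqueness is guaranteed by $H^2(\mathbb{Z}/5\mathbb{Z},\mathbb{C}^*) = 0$.

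For the non-existence of an algebra structure on $\gamma_k := 1 + \alpha^k\xi + \alpha^{k+1}\xi + \alpha^{k+2}\xi + \alpha^{k+3}\xi$, I will follow the proof of Lemma~\ref{noalg} almost verbatim. Using $[\alpha^g\xi]=[\xi\alpha^{-g}]$ one checks that $\alpha\gamma_k\alpha^{-1} = \gamma_{k+2}$, so the five $\gamma_k$ form a single inner-automorphism orbit and it suffices to rule out $k=0$. Suppose $\gamma_0$ carries an algebra structure, let $\kappa=\gamma_0$ viewed as a left module over itself, and let $\iota = 1+\alpha^4\xi$ viewed as a right module over itself; this algebra exists as the $\alpha^2$-conjugate of the Izumi algebra $1+\xi$, via Lemma~\ref{lem:inneraut}. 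Frobenius reciprocity gives $(\iota\kappa,\iota\kappa) = (\bar\iota\iota,\kappa\bar\kappa) = (1+\alpha^4\xi,\gamma_0) = 1$, so $\iota\kappa$ is irreducible, and hence $\iota\kappa\bar\kappa\bar\iota$ is a simple algebra in the $(1+\alpha^4\xi)$-$(1+\alpha^4\xi)$ bimodule category, which is $\mathscr{I}_1$. Its Frobenius--Perron dimension is $(1+d)(1+4d) = \frac{135+25\sqrt{29}}{2}$, and the contradiction comes from the $I_1$-analogue of Lemma~\ref{i10graphs}, which rules out this index.

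The main obstacle is establishing enough of the $I_1$-analogue of Lemma~\ref{i10graphs} to exclude $\frac{135+25\sqrt{29}}{2}$ as the dimension of a simple algebra. The method is a finite but tedious enumeration of nonnegative integer matrices $A$ for which $AA^T$ is a reduced fusion matrix in $I_1$ (using Lemmas~\ref{findgraphs} and~\ref{lem:algebra-bound}), followed by elimination of the surviving candidates via dimension constraints on the odd vertices (as in Example~\ref{ex}(c)). Structurally this is no harder than the $n=3$ version or the $I_2$ version already done in Lemma~\ref{i10graphs}, just longer.
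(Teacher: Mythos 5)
Your proposal is correct and follows essentially the same route as the paper, whose own proof of this lemma is simply the remark that the arguments are the same as in the $n=3$ case (the $1+\xi$ supertransitivity argument, the $\mathrm{Vec}(\mathbb{Z}/5\mathbb{Z})$ reduction via $\alpha\lambda\cong\lambda$, and the Lemma~\ref{noalg}-style exclusion using the index $\frac{135+25\sqrt{29}}{2}$ against the $I_1$ graph list). You have in fact spelled out the details more explicitly than the paper does, including correctly identifying that the only piece not already on record is the (finite, tedious) $I_1$-analogue of Lemma~\ref{i10graphs} needed to exclude that index.
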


\begin{lemma}
 The $\mathscr{I}_1-\mathscr{I}_3 $ subfactor constructed in \ref{I3} has principal and dual graphs (3) and (3').
\end{lemma}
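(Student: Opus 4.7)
The plan is to mirror the proof of Lemma \ref{existence} from the Haagerup ($n=3$) case. Theorem \ref{I3} already produces the $\mathscr{I}_1$--$\mathscr{I}_3$ subfactor $N\subset P$ as an intermediate of an $\mathscr{I}_1$--$\mathscr{I}_2$ subfactor $N\subset M$, with $[M:P]=5$ and $[P:N]=1+(n-1)d=11+2\sqrt{29}$. The index of $N\subset P$ therefore agrees with that of graph (3).

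For the principal graph, I would first establish an $I_1$-version of Lemma \ref{i10graphs}, classifying $I_1$-compatible principal graphs of simple algebra objects of the relevant index. Using the reduced fusion matrix method of Lemma \ref{findgraphs} applied to objects in $\mathscr{I}_1$ whose dimension is $10+2\sqrt{29}$, the only admissible candidate that factors as $A^r(A^r)^T$ with nonnegative integer entries, and that also avoids producing odd vertices violating the Jones index restriction, is graph (3) (corresponding to $\gamma=1+\mu_j+\nu_j$ for a single index $j$). Alternative candidates are eliminated just as in Lemma \ref{i10graphs} and Example \ref{ex}(c).

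For the dual graph, observe that $P\subset M$ is a $*$--$\mathscr{I}_2$ subfactor of index $5$, so it must have dual graph (7'), and hence also principal graph (7'). Writing $\kappa={}_PM_M$, this gives $[\kappa\bar\kappa]=[1]+\sum_{i=1}^{4}[\beta^i]$ where $\beta$ generates a $\mathbb{Z}/5$ subgroup of invertible $P$--$P$ bimodules in $\mathscr{I}_3$. Frobenius reciprocity yields $(\bar\iota\iota,\kappa\bar\kappa)=(\iota\kappa,\iota\kappa)=1$, placing each $\beta^i$ at distance exactly $2$ from $*$ on the dual graph, and a similar computation of $([(\bar\iota\iota)^2],[\beta^i])$ pins down the number of length-$4$ paths from $*$ to each one-dimensional vertex. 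The $\mathbb{Z}/5$ action and the (now $5$-valent) local structure at the vertex adjacent to the fundamental object force a central pentagon of odd vertices carrying the $\mathbb{Z}/5$-orbit of a noninvertible even vertex $\sigma$ on its sides and the $\mathbb{Z}/5$-orbit of the identity on its corners, together with a single extra odd vertex $\zeta$ adjacent to all five copies of $\sigma$. This is precisely graph (3'), and a Frobenius-Perron index check shows the graph cannot be enlarged further.

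The main obstacle will be the final combinatorial step: verifying that the Frobenius reciprocity constraints together with the index value uniquely determine the full dual graph, ruling out larger or alternative graphs that are a priori consistent with the multiplicities at small depth. In the $n=3$ case, saturating the index by adding the three forced edges was enough to finish; the same strategy should succeed here, but the increase from three-fold to five-fold symmetry leaves more candidate graphs that must be excluded one by one via index counts and $\mathbb{Z}/5$-equivariance.
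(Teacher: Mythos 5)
Your proposal follows exactly the route the paper intends: the paper's own ``proof'' here is the single sentence that the arguments are ``essentially the same as in the $n=3$ case,'' i.e.\ one reruns Lemma \ref{existence} --- identify $N\subset P$ inside the $\mathscr{I}_1$--$\mathscr{I}_2$ subfactor, match the index $1+(n-1)d=11+2\sqrt{29}$ against the unique $I_1$-compatible graph, and reconstruct the dual graph from the principal graph (7$'$) of $P\subset M$, Frobenius reciprocity, and the $\mathbb{Z}/5$ symmetry. Your outline of all three steps, including the honest flag that the final dual-graph reconstruction is the delicate part, is faithful to that.

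One concrete slip: you identify the underlying object of the algebra as $\gamma=1+\mu_j+\nu_j$ for a single index $j$. That object has dimension $1+(d-1)+(d+1)=1+2d=6+\sqrt{29}$, which does not match the index $11+2\sqrt{29}=1+4d$ that you (correctly) computed two sentences earlier, nor the non-unit dimension $10+2\sqrt{29}=4d$ you use for the reduced fusion matrix. The correct candidate is $\gamma=1+\nu_1+\mu_1+\nu_2+\mu_2$, i.e.\ $1+\sum_{j=1}^{(n-1)/2}(\nu_j+\mu_j)$ as in the paper's conjecture (which reduces to $1+\mu+\nu$ only when $n=3$). This is a parenthetical error and does not affect the logic of the index-matching argument, but if taken literally it would send the reduced-fusion-matrix computation to the wrong object.
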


\begin{lemma}
 The fusion category $\mathscr{I}_3 $ has fusion ring $I_2$.
\end{lemma}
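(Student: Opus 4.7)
The plan is to mimic the proof of Lemma \ref{fring} from the Haagerup ($n=3$) case, reading the fusion data of $\mathscr{I}_3$ off from the dual graph $(3')$, which by the previous lemma is the dual principal graph of the $\mathscr{I}_1$--$\mathscr{I}_3$ subfactor constructed in Theorem \ref{I3}.

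First I would identify the simple objects of $\mathscr{I}_3$ from the vertices at the even side of the dual graph $(3')$. The graph $(3')$ should exhibit five even vertices of Frobenius--Perron dimension $1$ (call them $1,\beta,\beta^2,\beta^3,\beta^4$) together with five vertices of dimension $d=\frac{5+\sqrt{29}}{2}$ (call them $\sigma,\beta\sigma,\beta^2\sigma,\beta^3\sigma,\beta^4\sigma$ after fixing a labeling compatible with the grading), for a total of ten simple objects; this matches the object count of $I_2$ for $n=5$. A Frobenius--Perron weight computation on the graph confirms that these are indeed the correct dimensions, so as a set of objects with dimensions $\mathscr{I}_3$ agrees with $I_2$.

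Next I would verify that the five invertible objects form a subgroup isomorphic to $\mathbb{Z}/5\mathbb{Z}$. Since $1,\beta,\dots,\beta^4$ are the only dimension-one objects and their tensor products are again invertible, the full subcategory they generate is pointed with group $\mathbb{Z}/5\mathbb{Z}$; and the labeling of the five dimension-$d$ vertices as a single orbit under tensoring by $\beta$ follows by Frobenius reciprocity from how $\beta$ acts on the odd vertex adjacent to $\sigma$ in $(3')$. This gives the relations $[\beta^5]=[1]$ and $[\beta \sigma]=[\sigma \beta^k]$ for some $k$ fixed by the grading, which matches the $I_2$ relation $[\alpha\xi]=[\xi\alpha^{-1}]$ after suitable renaming.

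The main obstacle, as in the Haagerup case, will be pinning down the conjugation/duality structure: one must show $[\overline{\beta^j\sigma}] = [\beta^{-j}\sigma]$ (or the analogous identity picking out the correct $I_2$-duality) rather than some other pairing among the five noninvertible objects. I would handle this exactly as in Lemma \ref{fring}: the noninvertible objects at depth two on the principal graph $(3)$ all have distinct dimensions and are thus each self-conjugate, and then the annular tangles argument of \cite[\S 3.3]{1007.1730} (as invoked in \cite{MR1929335}) transfers this information across the subfactor to force $[\beta^j\sigma]=[\overline{\beta^j\sigma}]=[\sigma\beta^{-j}]$ on the dual side.

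Finally, once the invertible subgroup, the grading, and the conjugation are determined, the remaining fusion coefficients are forced: knowing $[\sigma^2]$ must be a $\beta$-invariant nonnegative combination of the simple objects of total dimension $d^2=1+5d$, together with the constraint that $(\sigma^2,1)=1$ and Frobenius reciprocity with the already-known products $[\beta^j\sigma]$, uniquely determines $[\sigma^2]=[1]+\sum_{j=0}^{4}[\beta^j\sigma]$. This is exactly the defining relation of $I_2$ for $n=5$, completing the identification $\mathscr{I}_3\cong I_2$ as fusion rings.
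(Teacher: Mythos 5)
Your strategy is the intended one: the paper offers no separate argument for this lemma, saying only that the proof is ``essentially the same as in the $n=3$ case,'' i.e.\ as in Lemma \ref{fring}, and your proposal is a reasonable transcription of that argument. Reading the ten simple objects and their dimensions off the dual graph $(3')$, identifying the pointed subcategory with $\mathbb{Z}/5\mathbb{Z}$, and the final step deducing $[\sigma^2]=[1]+\sum_j[\beta^j\sigma]$ from the dimension count $d^2=1+5d$ together with Frobenius reciprocity are all fine.

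The gap is in the step that pins down the conjugation. You assert that ``the noninvertible objects at depth two on the principal graph $(3)$ all have distinct dimensions and are thus each self-conjugate.'' That is exactly the $n=3$ situation (the two depth-two objects are $\nu$ and $\mu$, of dimensions $d+1$ and $d-1$), but it is false for $n=5$: graph $(3)$ has index $1+4d=11+2\sqrt{29}$, and its even vertices besides $*$ are $\nu_1,\nu_2$ of dimension $d+1$ and $\mu_1,\mu_2$ of dimension $d-1$. Dimension considerations therefore only show that conjugation preserves each pair $\{\nu_1,\nu_2\}$ and $\{\mu_1,\mu_2\}$, and cannot exclude $[\bar\nu_1]=[\nu_2]$. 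The self-conjugacy you need has to come from elsewhere --- in practice from the Evans--Gannon description of the fusion ring $I_1$, which the paper takes as input --- before the transfer argument of \cite[\S 3.3]{1007.1730} can be invoked. This step genuinely matters: writing $[\sigma\beta^m]=[\beta^{km}\sigma]$, the exponent $k$ is unchanged by any renaming $\beta\mapsto\beta^j$ (since $\mathrm{Aut}(\mathbb{Z}/5\mathbb{Z})$ is abelian), so for $n=5$ the cases $k=1,2,3$ are not disposed of ``after suitable renaming'' as your second paragraph suggests; they are only eliminated once self-conjugacy of every $\beta^j\sigma$ is established, which forces $-kj\equiv j$ for all $j$ and hence $k=-1$. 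With the self-conjugacy on the $\mathscr{I}_1$ side properly sourced, the rest of your argument closes as written.
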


\begin{theorem}
Each $\mathscr{I}_i, i=1,2,3 $ has exactly three module categories, whose dual categories in each case are again the three $\mathscr{I}_i$. The complete list of subfactors whose fusion categories are Morita equivalent to $\mathscr{I}_i$ is as follows; each exists and is unique up to isomorphism of the planar algebra. The $\mathscr{I}_i-\mathscr{I}_i $ subfactors are self-dual while the $\mathscr{I}_i-\mathscr{I}_j $ subfactors for $i \neq j $ come in non-self-dual pairs; we only list each pair once.

(a) $\mathscr{I}_1-\mathscr{I}_2$: One with principal graphs (1)-(1') and one with principal graphs (2)-(2').

(b) $\mathscr{I}_1-\mathscr{I}_3$: One subfactor with principal graphs (3)-(3') and one with principal graphs (5)-(5').

(c) $\mathscr{I}_2-\mathscr{I}_3$: One subfactor with principal graphs (6')-(6'). 

(d) $\mathscr{I}_1-\mathscr{I}_1$: One subfactor each with principal graphs (4)-(4), (6)-(6), and (7)-(7).

(e) $\mathscr{I}_2-\mathscr{I}_2$: One subfactor with principal graph (4')-(4').

(f) $\mathscr{I}_3-\mathscr{I}_3$: One subfactor with principal graph (4')-(4').
\end{theorem}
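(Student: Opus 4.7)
The approach is to mirror, step for step, the arguments used for the Haagerup case (i.e.\ Theorem \ref{thm:H1H3} followed by Theorem \ref{subthm}), since the preceding lemmas in this subsection have been carefully set up to provide the $n=5$ analogues of every input needed there. The key observation is that all of $\mathscr{I}_1, \mathscr{I}_2, \mathscr{I}_3$ lie in a single connected component of the Brauer-Picard 2-groupoid (because they are pairwise Morita equivalent, by construction of $\mathscr{I}_3$ in Theorem \ref{I3}), so the abstract counting principle $M=O_j B_{ij}$ applies.

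First, I would invoke the classification of minimal algebra objects in $\mathscr{I}_2$: by Lemma \ref{i10graphs} together with the preceding lemma on existence/uniqueness of algebra structures (and the fact that $1+\alpha^k\xi+\alpha^{k+1}\xi+\alpha^{k+2}\xi+\alpha^{k+3}\xi$ does not support any algebra structure), the minimal algebras in $\mathscr{I}_2$ are exactly $1$, $1+\xi$, and $1+\alpha+\alpha^2+\alpha^3+\alpha^4$, each with a unique $Q$-system structure. The duals of the corresponding three module categories are $\mathscr{I}_2$ (trivially), $\mathscr{I}_1$ (by the construction of $\mathscr{I}_1$ as the other even part of the Izumi subfactor), and $\mathscr{I}_3$ (by the construction of $\mathscr{I}_3$ in Theorem \ref{I3} as bimodules for $1+\alpha+\cdots+\alpha^4$, together with the lemma that $\mathscr{I}_3$ has fusion ring $I_2$; the fact that $\mathscr{I}_3 \not\cong \mathscr{I}_2$ is precisely the statement of Theorem \ref{I3}). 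Thus $B_{2j}=1$ for each $j\in\{1,2,3\}$, so the counting identity gives $M=O_j$ for each $j$, and hence $B_{ij}=M/O_j=1$ for all $i,j$. This proves the first assertion of the theorem.

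Second, for the enumeration of subfactors, I would use that a $\mathscr{I}_i$-$\mathscr{I}_j$ subfactor is determined (up to isomorphism of planar algebras) by a simple object in the unique $\mathscr{I}_i$-$\mathscr{I}_j$ bimodule category, viewed up to the natural $\mathbb{Z}/5\mathbb{Z}$-twisting on each side. For the $\mathscr{I}_1$-$\mathscr{I}_2$ case, these simple objects are the odd bimodules of the Izumi subfactor itself (giving (1)-(1')) and of the subfactor realizing the algebra of Frobenius--Perron dimension $55+10\sqrt{29}$ (giving (2)-(2')). For $\mathscr{I}_1$-$\mathscr{I}_3$ the relevant bimodule category is constructed from the intermediate subfactor in the proof of Theorem \ref{I3}; its odd simples produce the pairs (3)-(3') and (5)-(5'). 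The remaining $\mathscr{I}_i$-$\mathscr{I}_i$ subfactors are read off by taking even (rather than odd) bimodules in these bimodule categories, and the $\mathscr{I}_2$-$\mathscr{I}_3$ subfactor (6')-(6') is obtained exactly as in the proof of Lemma \ref{noalg}: iterate the algebra $1+\alpha+\cdots+\alpha^4$ to compose the $\mathscr{I}_1$-$\mathscr{I}_2$ and $\mathscr{I}_1$-$\mathscr{I}_3$ sides. In each case the index matches a unique graph pair on the list (1)-(7), (1')-(7'), and the twisting by $\mathbb{Z}/5\mathbb{Z}$ identifies the apparent multiplicity of 5 odd vertices of the same dimension into a single subfactor.

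The main obstacle, and the one requiring the most care, is the last step: verifying that the enumerated list is \emph{complete}, i.e.\ that every possible pair of principal/dual graphs actually occurs for exactly one subfactor. The completeness follows from uniqueness of the bimodule category in each case (Step 1) together with Lemma \ref{i10graphs} cutting down the $I_2$-compatible graphs to the displayed seven; a careful dimension bookkeeping using $d^2 = 1+5d$ is required to confirm that the six extra graphs mentioned in the proof of Lemma \ref{i10graphs} cannot arise. I do not expect any serious new input beyond what is already in the preceding lemmas; the argument is essentially a transcription of the $n=3$ proof, with $\mathbb{Z}/3\mathbb{Z}$ replaced by $\mathbb{Z}/5\mathbb{Z}$ throughout and with the quadratic integer ring $\mathbb{Z}[\tfrac{1+\sqrt{13}}{2}]$ replaced by $\mathbb{Z}[\tfrac{1+\sqrt{29}}{2}]$.
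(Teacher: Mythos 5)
Your proposal is correct and follows essentially the same route as the paper, which in fact offers no separate proof of this theorem beyond the remark that the arguments are ``now essentially the same as in the $n=3$ case'': the counting identity $M=O_jB_{ij}$ with $B_{2j}=1$ from the classification of minimal algebras in $\mathscr{I}_2$, followed by enumerating internal endomorphism algebras of simple objects in the (unique) bimodule categories, is exactly the transcription of Theorems \ref{thm:H1H3} and \ref{subthm} that the authors intend. Your identification of which preceding lemmas supply each $n=5$ input (Lemma \ref{i10graphs}, the existence/uniqueness lemma for $1$, $1+\xi$, $1+\alpha+\cdots+\alpha^4$, and Theorem \ref{I3}) matches the paper's setup.
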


\newcommand{\urlprefix}{}
\bibliographystyle{alpha}
%Included for winedt:
%input "bibliography/bibliography.bib"
\bibliography{bibliography}
% ----------------------------------------------------------------

\end{document}